\let\pa\partial  
\let\na\nabla  
\let\eps\varepsilon  
\newcommand{\N}{{\mathbb N}}  
\newcommand{\R}{{\mathbb R}}
\newtheorem{theorem}{Theorem}   
\newtheorem{lemma}[theorem]{Lemma}   
\newtheorem{remark}[theorem]{Remark}
\begin{document}  

\title[Cross-diffusion systems and fast-reaction limits]{Cross-diffusion systems
and fast-reaction limits}

\author[E. Daus]{Esther S. Daus}
\address{Institute for Analysis and Scientific Computing, Vienna University of  
	Technology, Wiedner Hauptstra\ss e 8--10, 1040 Wien, Austria}
\email{esther.daus@gmail.com} 

\author[L. Desvillettes]{Laurent Desvillettes}
\address{Universit\'{e} Paris Diderot, Sorbonne Paris Cit\'{e}, Institut de Math\'{e}matiques de Jussieu-Paris Rive Gauche, UMR 7586, CNRS, Sorbonne Universit\'{e}s, UPMC Univ. Paris 06, F-75013, Paris, France}
\email{desvillettes@math.univ-paris-diderot.fr}

\author[A. J\"ungel]{Ansgar J\"ungel}
\address{Institute for Analysis and Scientific Computing, Vienna University of  
	Technology, Wiedner Hauptstra\ss e 8--10, 1040 Wien, Austria}
\email{juengel@tuwien.ac.at} 

\date{\today}

\thanks{The authors have been partially supported by the Austrian-French
project ``Amad\'ee'' of the Austrian Exchange Service (\"OAD).
The first and second authors acknowledge partial support from
the French ``ANR blanche'' project Kibord, grant ANR-13-BS01-0004, and 
from the Universit\'e Sorbonne Paris Cit\'e, in the framework of the 
``Investissements d'Avenir'', grant ANR-11-IDEX-0005.
The first and last authors acknowledge partial support from   
the Austrian Science Fund (FWF), grants P27352, P30000, F65, and W1245} 

\begin{abstract}
The rigorous asymptotics from reaction-cross-diffusion systems for three 
spe\-cies with known entropy to cross-diffusion systems for two variables
is investigated. The equations are studied in a bounded domain with
no-flux boundary conditions. The global existence of very weak
(integrable) solutions and the rigorous fast-reaction limit are proved. 
The limiting system inherits
the entropy structure with an entropy that is not the sum
of the entropies of the components. Uniform estimates are derived from
the entropy inequality and the duality method.
\end{abstract}

% \paragraph{Keywords:}  
\keywords{Strongly coupled parabolic systems, reaction-cross-diffusion equations,
existence of weak solutions, fast-reaction limit, entropy method, duality method.}  
 
% \paragraph{AMS classification:}  
\subjclass[2000]{35K51, 35K57, 35B25}  

\maketitle

%%%%%%%%%%%%%%%%%%%%%%%%%%%%%%%%%%%%%%%%%%%%%%%%%%%%%%%%%%%%%%%%%%%%%%%%%%%%%%%

\section{Introduction}

The analysis of cross-diffusion systems with unknowns $u_1,\ldots,u_n$
often relies on the existence of a convex Lyapunov functional, 
called here an entropy, which provides suitable
gradient estimates \cite{ChJu04,DLM14,DLMT15,Jue15}. Given the partial 
differential system, the difficulty is
to identify such an entropy functional. Often, it is of the
form $\int_\Omega\sum_{i=1}^n h_i(u_i)dx$ for convex functions $h_i$, 
which only depend on $u_i$;
see the examples in the aforementioned references.
In this paper, we identify entropy functionals for certain cross-diffusion systems
that are generally not the sum of all $h_i(u_i)$.

Our approach is to consider first reaction-cross-diffusion systems for which
a Lyapunov functional is known to exist and which is of the form
$\int_\Omega\sum_{i=1}^n h_i(u_i)dx$. Then we perform the limit of vanishing
relaxation times that are related to the reaction terms. The limiting system
consists of cross-diffusion equations, which possesses an entropy inherited from
the original system and where the variables $u_i$ are related by an algebraic
relation coming from the reaction terms. This strategy enlarges the class
of cross-diffusion systems with an entropy structure by providing examples
for which the entropy cannot be easily found in another way.

As an example of this approach, we consider reaction-cross-diffusion equations
whose reaction terms correspond to one reversible reaction of the form
$A \rightleftharpoons B + C$. More specifically, we study the equations
\begin{equation}\label{1.eq}
  \pa_t u_i^\eps - \Delta F_i(u^\eps) = Q_i(u^\eps) \quad\mbox{in }
	\Omega,\ t>0,\ i=1,2,3,
\end{equation}
supplemented with no-flux boundary and initial conditions,
\begin{equation}\label{1.bic}
  \na F_i(u^\eps)\cdot\nu = 0\quad\mbox{on }\pa\Omega, \quad
	u_i^\eps(0) = u_i^I\quad\mbox{in }\Omega,\ i=1,2,3.
\end{equation}
Here, $u^\eps=(u_1^\eps,u_2^\eps,u_3^\eps)$, 
$\Omega\subset\R^d$ ($d\ge 1$) is a bounded domain with smooth boundary,
and $\nu$ is the exterior unit normal vector to $\pa\Omega$.
The unknowns $u_i^\eps$ can be interpreted as chemical concentrations, but 
generally they are just densities in some diffusive system whose application is 
not specified. The nonlinear functions contain cross-diffusion terms,
\begin{equation}\label{1.F}
\begin{aligned}
  F_1(u^\eps) &= f_1(u_1^\eps) + f_{12}(u_1^\eps,u_2^\eps), \\
	F_2(u^\eps) &= f_2(u_2^\eps) + f_{21}(u_1^\eps,u_2^\eps), \\
	F_3(u^\eps) &= f_3(u_3^\eps),
\end{aligned}
\end{equation}
and the reaction terms are given by 
\begin{equation}\label{1.Q}
\begin{aligned}
	Q_1(u^\eps) &= -\eps^{-1}\big(q_1(u_1^\eps) - q_2(u_2^\eps)q_3(u_3^\eps)\big), \\
	Q_2(u^\eps) &= \eps^{-1}\big(q_1(u_1^\eps) - q_2(u_2^\eps)q_3(u_3^\eps)\big), \\
  Q_3(u^\eps) &= \eps^{-1}\big(q_1(u_1^\eps) - q_2(u_2^\eps)q_3(u_3^\eps)\big).
\end{aligned}
\end{equation}
The constraints on functions $f_i$, $f_{ij}$, and $q_i$ are specied in Assumptions (A1)-(A5) below.
The parameter $\eps>0$ models the inverse of a reaction rate or, generally, 
a relaxation time.

Without diffusion terms, the corresponding system of ordinary differential equations
is known to possess the Lyapunov functional
\begin{equation}\label{1.h}
  h(u) := \sum_{i=1}^3\int_1^{u_i}\log q_i(s)ds.
\end{equation}
When $q_i(u_i)=u_i$, we recover the physical entropy for the reaction 
$A \rightleftharpoons B + C$, i.e. $h(u)=\sum_{i=1}^3 u_i(\log u_i-1)$.
The functional $\int_\Omega h(u)dx$ is still a Lyapunov functional 
if the diffusion terms are
given by $\Delta f_i(u_i)$. In this paper, we allow for the cross-diffusion
terms $f_{12}(u_1,u_2)$ and $f_{21}(u_1,u_2)$. Clearly, an additional assumption
is then needed to guarantee that \eqref{1.h} is still an entropy for \eqref{1.eq}. We show that this is the case under a ``weak cross-diffusion'' condition;
see Assumption (A5) below.

The fast-reaction limit $\eps\to 0$ in \eqref{1.eq} leads formally to
the system
\begin{align*}
  & \pa_t(u_1+u_2) = \Delta(F_1(u)+F_2(u)), \\
	& \pa_t(u_1+u_3) = \Delta(F_1(u)+F_3(u)), \\
  & q_1(u_1) = q_2(u_2)q_3(u_3)\quad\mbox{in }\Omega,\ t>0.
\end{align*}
Under certain conditions on $q_i$, this system can be formulated in terms
of the variables $v=u_1+u_2$ and $w=u_1+u_3$, leading to
\begin{equation}\label{1.vw}
  \pa_t v = \Delta G_1(v,w), \quad \pa_t w = \Delta G_2(v,w),
\end{equation}
where 
\begin{align*}
  & G_1(v,w) = (F_1+F_2)(u_1,u_2,u_3), \quad G_2(v,w) = (F_1+F_3)(u_1,u_2,u_3), 
	\quad\mbox{and} \\
	& u_1 = q_1^{-1}\big(q_2(u_2(v,w))q_3(u_3(v,w))\big), \quad 
	u_2 = u_2(v,w),\quad u_3 = u_3(v,w).
\end{align*}
Formally, the limit entropy
\begin{align*}
  \int_\Omega h_0(v,w) dx
	&= \int_\Omega\bigg\{\int_1^{q_1^{-1}(q_2(u_2(v,w))q_3(u_3(v,w)))}
	\log q_1(s)ds \\
	&\phantom{xxxxx}{}+ \int_1^{u_2(v,w)}\log q_2(s)ds 
	+ \int_1^{u_3(v,w)}\log q_3(s)ds\bigg\}dx
\end{align*}
is a Lyapunov functional for \eqref{1.vw}.
A simple example is given in Remark \ref{rem.ex}.

Fast-reaction limits in reaction-diffusion equations have been studied since
about 20 years. These limits are of importance in mass-action kinetics chemistry
to reduce a system of many components to a (nonlinear) system with less
equations. One of the first papers is \cite{HHP96}, where a fast-reaction limit in a
system consisting of one parabolic and one ordinary differential equation was
performed. Later, the fast-reaction limit in a two-species diffusion system
was shown, leading to a nonlinear diffusion equation \cite{BoHi03}. 
Fast irreversible reactions for two species, studied in \cite{BoPi12}, led to
a Stefan-type limit problem with a moving interface, which represents the 
chemical reaction front. Systems for three species with Lotka-Volterra-type
interactions \cite{MuNi11} or with reversible reactions
\cite{BPR12} were also analyzed.
A unified approach for self-similar fast-reaction limits was given in \cite{CrHi16}.

In \cite{HeTa16}, the fast-reaction limit in a system containing a parabolic 
equation on the domain boundary (volume surface diffusion model) was proved.
Here, the limit problem is the heat equation with a dynamic boundary condition.
A combination of the fast-reaction limit and homogenization techniques
has given a two-scale reaction-diffusion system with a moving boundary 
traveling within the microstructure \cite{MeMu10}. Finally, 
asymptotic limits related to fast reactions were investigated in 
reaction-diffusion equations from population dynamics \cite{CoDe14,IMN06,Mur12}. 
Here, the small parameter describes an averaged time within which two types of 
species convert to each other. If the conversion is of nonlinear type, 
the limit problem becomes a cross-diffusion system.

A three-species system with power-like reaction functions $q_i$ was
investigated in \cite{BPR12,BoRo15}, proving the existence of mild
solutions employing a semigroup approach \cite{BoRo15} and the fast-reaction limit
using entropy and duality techniques \cite{BPR12}. 

The main difference between
our approach and the results of \cite{BPR12,BoRo15}, and the main novelty of this paper,
 is that we allow for
cross-diffusion terms in the original reaction-diffusion system, at least
``weak cross-diffusion'' as specified in Assumption (A5). 
Interestingly, the Lyapunov functional structure is still kept when adding
cross-diffusion to a certain extent. This leads to a much larger set of
cross-diffusion systems than known up to now, for which a Lyapunov functional
can be produced.

Before we detail our main results, we need some assumptions.
First, we introduce the notation 
$$
  \R_+=[0,\infty), \quad \R_+^*=(0,\infty), \quad Q_T=\Omega\times(0,T).
$$
The functions $F_i$ and $Q_i$ are extended continuously to $\R^3$ by setting
$F_i(u)=F_i(|u_1|,|u_2|,$ $|u_3|)$ and $Q_i(u)=Q_i(|u_1|,|u_2|,|u_3|)$
for any $u=(u_1,u_2,u_3)\in\R^3$. Finally, we set $|u|^2=\sum_{i=1}^3 u_i^2$ for
$u=(u_1,u_2,u_3)\in\R^3$.

\bigskip
\noindent{\bf Assumptions.} We impose the following conditions.
\renewcommand{\labelenumi}{(A\theenumi)}
\begin{enumerate}[leftmargin=10mm]
\item Nonlinear diffusion: $f_i\in C^1(\R_+;\R_+)$ satisfies
$f_i(s)=g_i(s)s$ for $s\ge 0$, where $g_i\in C^0(\R_+;\R_+)\cap C^1(\R_+^*;\R_+^*)$,
and $f'_i(s)\ge \kappa_1>0$ for all $s\ge 0$ and for some $\kappa_1>0$.

\item Cross-diffusion: $f_{ij}\in C^0(\R_+^2;\R_+)$ satisfies
$f_{ij}(s_1,s_2)=g_{ij}(s_1,s_2)s_i$, where
$g_{ij}\in C^0(\R_+^2;\R_+)\cap C^1((\R^*_+)^2;\R_+^*)$, and
$\pa_1 f_{ij}(s_1,s_2)\ge 0$, $\pa_2 f_{ij}(s_1,s_2)\ge 0$ for all $s_1$, $s_2\ge 0$,
$i,j=1,2$, $i\neq j$.

\item Reaction terms I: $q_i\in C^1(\R_+)$ satisfies $q_i(0)=0$,
$q_i'(s)>0$ for all $s>0$, and $q_i(s_0) \ge 1$ for some $s_0>0$.

\item Reaction terms II: 
There exist $C_q>0$, $\widetilde{C}_q>0$ such that 
for all $s=(s_1,s_2,s_3)\in\R_+^3$,
$$
  \lim_{|s|\to\infty}\frac{q_1(s_1)+q_2(s_2)q_3(s_3)}{\sum_{i=1}^3 F_i(s)
	\sum_{i=1}^3 s_i + 1} = 0, \quad
	\frac{q_i(s_i)(1+q_i(s_i))}{q_i'(s_i)f_i'(s_i)(F_i(s)s_i+1)} \le C_q,
$$
$$
  \frac{\sum_{i=1}^3  \int_1^{s_i} \log (1 + q_i(v))\,dv}{\sum_{i=1}^3 
  F_i(s)\sum_{i=1}^3 s_i + 1} \le  \widetilde{C}_q.
$$
\item Weak cross-diffusion: There exists $\eta_0>0$ and $\delta\in(0,1)$ such that
for all $\eta\in[0,\eta_0]$ and $s_1$, $s_2\ge 0$,
\begin{align*}
  \bigg( & \frac{q_1'(s_1)\pa_2f_{12}(s_1,s_2)}{q_1(s_1)(1+\eta q_1(s_1))}
	+ \frac{q_2'(s_2)\pa_1f_{21}(s_1,s_2)}{q_2(s_2)(1+\eta q_2(s_2))}\bigg)^2 \\
	&\le 2(1-\delta)^2\frac{q_1'(s_1)q_2'(s_2)(f_1'(s_1)+\pa_1 f_{12}(s_1,s_2))
	(f_2'(s_2)+\pa_2 f_{21}(s_1,s_2))}{q_1(s_1)q_2(s_2)(1+\eta q_1(s_1))
	(1+\eta q_2(s_2))}.
\end{align*}

\item Initial data: $u_i^I\in L^\infty(\Omega)$ and there exists $\kappa_2>0$ such
that $u_i^I\ge\kappa_2>0$ in $\Omega$.
\end{enumerate}

\begin{remark}[Discussion of the assumptions] \label{rem.ass}\rm
We indicate where the main assumptions are needed in the existence proof.
\begin{itemize}[leftmargin=5mm]
\item Assumption (A1): The lower bound $f_i'(s)\ge\kappa_1>0$ and $f_i(0)=0$ imply 
that $f_i(s)\ge\kappa_1 s$ for all $s\ge 0$. 
This means that we require some amount of standard diffusion in the problem.
This assumption implies a uniform $L^2$ bound for the approximate solutions; see
Lemma \ref{lem.L2}.

\item Assumption (A2): This is a structure condition on the diffusion matrix.
It allows us to show that $F$ is a homeomorphism on $\R_+^3$ 
(see Lemma \ref{lem.hom}), which is needed
in the approximate scheme.

\item Assumption (A3): This condition is satisfied, for instance, for power-type
functions $q_i$ with exponent larger than or equal to one. It ensures that the entropy built out of the $q_i$ is well-behaved.

\item Assumption (A4): The conditions relate the reaction and diffusion terms.
Together with the duality estimate, they yield the uniform integrability of $Q_i$.
Note that because of this assumption, it is not possible to handle reaction terms 
which grow too fast when the unknowns become large.
The third bound is needed to show that the regularized entropy density
is bounded from below; see the arguments before \eqref{2.Qh}.

\item Assumption (A5): The weak cross-diffusion condition allows us to prove
nonlinear gradient estimates.
Expanding the square on the left-hand side of the inequality of the assumption
and choosing $\delta>0$ such that $2(1-\delta)^2<1$, we see that this assumption
implies that 
\begin{equation}\label{1.det}
  \pa_2 f_{12}(s_1,s_2)\pa_1 f_{21}(s_1,s_2) < \big(f_1'(s_1)+\pa_1 f_{12}(s_1,s_2)\big)
	\big(f_2'(s_1)+\pa_2 f_{21}(s_1,s_2)\big).
\end{equation}
It means that the determinant of the diffusion matrix $F'(u)$ is positive.
This information is needed to show that $F$ is a homeomorphism on $\R_+^3$; see
Lemma \ref{lem.hom}. Note that assumption (A5) is typically satisfied when the derivatives
of the cross diffusion terms $f_{12}$ and $f_{21}$ are assumed to be small when compared to the derivatives of the standard diffusion terms $f_1$, $f_2$; in other words when the cross diffusion is dominated in some sense by the standard diffusion. 

\item Assumption (A6): The positivity assumption on the initial data is necessary
to prove the nonnegativity of $u_i$. By using an approximation argument, we may
relax this condition to $u_i^I\ge 0$ in $\Omega$, but we leave the technical details 
to the reader.
\end{itemize}
Note that for instance, the functions
\begin{align*}
  & f_i(u_i) = \alpha_{i}u_i + u_i^{\delta}, \quad
	q_i(u_i) = u_i^\beta, \quad\beta \ge 1, \quad i=1,2,3, \\
	& f_{12}(u_1,u_2) = \alpha u_1^\gamma u_2, \quad 
	f_{21}(u_1,u_2) = \alpha u_1u_2^\gamma,
\end{align*}
satisfy Assumptions (A1)-(A5) if $\delta > 1$ is sufficiently large and
$\alpha>0$ is sufficiently small; see Lemma \ref{lem.fct} for details.
%In the particular case $\delta=1$ (standard diffusion case), we obtain the 
%sharp restriction of at most linearly growing reaction terms with $\beta=1$.
\qed
\end{remark}

The first main result is the global-in-time existence of very weak 
(i.e.\ integrable) solutions to equations \ref{1.eq}, for a given $\eps>0$.

\begin{theorem}[Global existence of solutions]\label{thm.ex}
Let $\Omega$ be a bounded open subset of $\R^d$ with a smooth boundary, let assumptions (A1)-(A6) hold, and let $\eps>0$, $T>0$. Then there exists a very weak
solution $u_i^\eps\in L^2(\Omega_T)$ to \eqref{1.eq}-\eqref{1.bic} such that
$$ 
  u_i^\eps\ge 0\mbox{ in }\Omega_T, \quad F_i(u^\eps),\,Q_i(u^\eps)\in L^1(Q_T), 
$$
and for all
$\phi_i\in H^1(0,T;L^2(\Omega))\cap L^\infty(0,T;W^{2,\infty}(\Omega))$ 
with $\na\phi_i\cdot\nu=0$ on $\pa\Omega$,
\begin{equation}\label{1.weak}
\begin{aligned}
  -\int_0^\infty\int_\Omega & u_i^\eps\pa_t\phi_i dxdt 
	- \int_0^\infty\int_\Omega F_i(u^\eps)\Delta\phi_i^\eps dxdt \\
	&= \int_0^\infty\int_\Omega Q_i(u^\eps)\phi_i dxdt
	+ \int_\Omega u_i^I(x)\phi_i(x,0)dx, 
\end{aligned}
\end{equation}
(and $u_i^\eps(0)=u_i^I$ in $H^{m}(\Omega)'$, $i=1,2,3$, where $m>2+d/2$).
Moreover, this solution satisfies the entropy inequality
\begin{equation}\label{1.ent}
  \int_\Omega h(u^\eps(t))dx 
	+ \delta\,\int_0^t\int_\Omega\sum_{i=1}^3 |\na [J_i(u_i^\eps)]|^2dxd\sigma
	\le \int_\Omega h(u^I)dx,
\end{equation}
where $h(u^\eps)$ is the entropy given by \eqref{1.h},
$\delta$ is defined in Assumption (A5), and
\begin{equation}\label{1.J}
  J_i(s) = \int_0^s\min\bigg\{1,\bigg(\frac{q'_i(y)f'_i(y)}{
	q_i(y)(1+q_i(y))}\bigg)^{1/2}\bigg\}dy, \quad s\ge 0, \quad i=1,2,3.
\end{equation}
\end{theorem}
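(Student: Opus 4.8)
The plan is to combine the boundedness-by-entropy method with a duality argument, regarding the entropy inequality \eqref{1.ent} as the very source of the a priori estimates rather than merely as a conclusion, $\eps>0$ being fixed throughout. Since the entropy \eqref{1.h} is a sum of strictly convex functions of the individual components, I would pass to the entropy variables $\mu_i := \log q_i(u_i^\eps)$, so that $\na\mu_i = (q_i'(u_i^\eps)/q_i(u_i^\eps))\na u_i^\eps$ and, conversely, $u_i^\eps = q_i^{-1}(e^{\mu_i})\ge 0$ is recovered by inverting a strictly increasing function. Working in the $\mu_i$ therefore enforces the nonnegativity $u_i^\eps\ge 0$ automatically and makes the reaction part of the entropy production manifestly dissipative. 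For the approximate scheme a regularized version $h_\eta$ of the entropy, carrying the parameter $\eta$ of Assumption (A5), is convenient, as it keeps the entropy variable bounded from above and so aids the fixed-point argument; accordingly (A5) is imposed on the whole range $\eta\in[0,\eta_0]$, and I describe the estimates for the representative case $\eta=0$.

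First I would build an approximate scheme: an implicit Euler discretization in time with step $\tau=T/N$, regularized by a small higher-order elliptic term $\sigma\sum_{|\alpha|\le m}\int_\Omega D^\alpha\mu_i\,D^\alpha\phi_i\,dx$ with $\sigma>0$ and $m>2+d/2$. The exponent $m$ is chosen so that $H^m(\Omega)\hookrightarrow W^{2,\infty}(\Omega)$; this both justifies the very weak formulation \eqref{1.weak}, in which the merely integrable solution is tested against $W^{2,\infty}$ functions and the second-order term $F_i(u^\eps)\Delta\phi_i$ makes sense, and explains why the initial datum can only be attained in $H^m(\Omega)'$. Existence at each time step follows from the Leray--Schauder fixed-point theorem applied in the $\mu$-variables, the coercivity coming from the $\sigma$-regularization; here the fact that $F$ is a homeomorphism on $\R_+^3$ (Lemma \ref{lem.hom}), which follows from the positivity of $\det F'$ in \eqref{1.det} and hence from (A5), guarantees that $u$ and the fluxes $F_i(u)$ are well defined from the $\mu_i$.

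The core of the proof is the discrete entropy estimate obtained by testing the scheme with $\mu_i$. The reaction terms contribute the clean nonpositive expression $\eps^{-1}\big(q_1(u_1^\eps)-q_2(u_2^\eps)q_3(u_3^\eps)\big)\log\big(q_2(u_2^\eps)q_3(u_3^\eps)/q_1(u_1^\eps)\big)\le 0$, by the elementary inequality $(a-b)\log(b/a)\le 0$ for $a,b>0$. The diffusion terms, after integration by parts using $\na F_i\cdot\nu=0$, produce a quadratic form in $(\na u_1^\eps,\na u_2^\eps,\na u_3^\eps)$ whose third component is the nonnegative $(q_3'f_3'/q_3)|\na u_3^\eps|^2$ and whose $1$--$2$ block has diagonal entries $D_1=(q_1'/q_1)(f_1'+\pa_1 f_{12})$, $D_2=(q_2'/q_2)(f_2'+\pa_2 f_{21})$ and off-diagonal entry $C=\tfrac12\big((q_1'/q_1)\pa_2 f_{12}+(q_2'/q_2)\pa_1 f_{21}\big)$. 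Assumption (A5) is precisely the quantitative statement that $C^2\le\tfrac12(1-\delta)^2 D_1 D_2\le(1-\delta)^2 D_1 D_2$, whence the form is bounded below by $\delta$ times its diagonal; since $D_i\ge q_i'f_i'/q_i\ge\min\{1,q_i'f_i'/(q_i(1+q_i))\}=|J_i'(u_i^\eps)|^2$, this dominates $\delta\sum_i|\na J_i(u_i^\eps)|^2$ with $J_i$ as in \eqref{1.J}. The third bound of (A4) ensures $h$ is bounded from below, so that summation over the time steps yields the discrete analogue of \eqref{1.ent}, uniformly in the scheme parameters. I expect this step --- the algebraic expansion of the cross-diffusion form and its control through (A5), together with the identification of the truncated diagonal with the clean dissipation $\delta\sum_i|\na J_i(u_i^\eps)|^2$ --- to be the main obstacle.

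The entropy estimate controls only $\na J_i(u_i^\eps)$, and since $J_i$ saturates for large arguments it does not by itself yield the $L^2(Q_T)$ bound needed to pass to the limit in the nonlinear terms. To close this gap I would invoke the duality method: because $Q_1+Q_2=0$ and $Q_1+Q_3=0$, the sums $v=u_1^\eps+u_2^\eps$ and $w=u_1^\eps+u_3^\eps$ solve the reaction-free equations $\pa_t v=\Delta(F_1+F_2)(u^\eps)$ and $\pa_t w=\Delta(F_1+F_3)(u^\eps)$; writing $(F_1+F_2)(u^\eps)=a\,v$ and $(F_1+F_3)(u^\eps)=b\,w$ with measurable coefficients $a,b\ge\kappa_1$ by (A1), the Pierre--Schmitt duality argument gives the uniform $L^2$ estimate of Lemma \ref{lem.L2}, and hence uniform $L^2(Q_T)$ bounds on the $u_i^\eps$ since $0\le u_1^\eps,u_2^\eps\le v$ and $0\le u_3^\eps\le w$. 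The growth comparisons of (A4) then provide, through a de la Vall\'ee--Poussin criterion, the uniform integrability of $F_i(u^\eps)$ and $Q_i(u^\eps)$ in $L^1(Q_T)$. Finally, I would combine the $L^2(0,T;H^1)$ bound on $J_i(u_i^\eps)$ with a time-translation estimate read off from the equations, apply the Aubin--Lions lemma to extract strong convergence of $J_i(u_i^\eps)$ and thus a.e.\ convergence of $u_i^\eps$, and upgrade this by the Vitali theorem to convergence of $F_i(u^\eps)$ and $Q_i(u^\eps)$ in $L^1(Q_T)$; the regularizing contributions vanish as the scheme parameters tend to zero, giving the very weak formulation \eqref{1.weak} with the initial datum attained in $H^m(\Omega)'$, while weak lower semicontinuity of the dissipation passes the entropy inequality \eqref{1.ent} to the limit.
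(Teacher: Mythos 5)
Your overall skeleton (implicit Euler plus regularization, discrete entropy inequality driven by (A5), duality for the $L^2$ bound, Aubin--Lions and Vitali to pass to the limit, lower semicontinuity for \eqref{1.ent}) matches the paper, and your algebra for the entropy estimate --- the $2\times 2$ block with diagonal $T_1,T_2$, off-diagonal controlled by (A5) via Young's inequality, and the identification of the truncated dissipation with $\delta\sum_i|\na J_i(u_i)|^2$ --- is exactly the paper's Lemma \ref{lem.ent}. However, your duality step contains a genuine gap. You apply the Pierre--Schmitt argument separately to $v=u_1^\eps+u_2^\eps$ and $w=u_1^\eps+u_3^\eps$. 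The discrete duality estimate (Lemma \ref{lem.dual}) only yields $\tau\sum_k\int_\Omega \mu^k (w^k)^2\,dx \le C\big(1+\tau\sum_k\int_\Omega \mu^k dx\big)$, so one must control the integral of the coefficient, which is done by bounding it on sublevel sets $\{w\le L\}$ and absorbing the rest. For the $v$-equation this works, since $F_1+F_2$ depends only on $(u_1,u_2)$, both controlled by $v$. But for the $w$-equation the coefficient $b=(F_1+F_3)(u^\eps)/w$ involves $F_1(u^\eps)=f_1(u_1^\eps)+f_{12}(u_1^\eps,u_2^\eps)$, and $u_2^\eps$ is \emph{not} controlled on $\{w\le L\}$: by \eqref{1.F} and (A2), $f_{12}(u_1,u_2)/u_1 = g_1(u_1)+g_{12}(u_1,u_2)$ can grow without bound in $u_2$, and (A1)--(A5) impose no growth restriction that would let you dominate $\int b$ by previously obtained bounds. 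The lower bound $a,b\ge\kappa_1$ that you invoke is irrelevant here; what is needed is an upper control, and it fails. This is precisely why the paper runs the duality argument once, on the single combination $v^k=2u_1^k+u_2^k+u_3^k$ with coefficient $\mu^k=(2F_1+F_2+F_3)/(2u_1+u_2+u_3)$ (using $2Q_1+Q_2+Q_3=0$): on $\{2u_1+u_2+u_3\le L\}$ all three components are bounded, so $\mu^k$ is bounded there and the estimate closes (Lemma \ref{lem.L2}).

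Two further points. First, your compactness step applies Aubin--Lions to $J_i(u_i^\eps)$, but the time-translation estimate you can read off from the equation is for $u_i$ (in $H^m(\Omega)'$), not for $J_i(u_i)$; a dual-norm time estimate does not transfer through the nonlinear map $J_i$, so the classical lemma does not apply as stated and one needs a nonlinear Aubin--Lions variant. The paper avoids this mismatch by deriving a uniform $W^{1,1}$ bound on $u_i$ itself (Lemma \ref{lem.W11}, via Cauchy--Schwarz combining the entropy dissipation with (A4) and the duality bound) and applying the discrete Aubin--Lions lemma of \cite{DrJu12} directly to $u_i^{(\tau)}$, with the time estimate of Lemma \ref{lem.time}. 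Second, your construction of the approximate scheme (entropy variables $\mu_i=\log q_i(u_i)$ with an $H^m$ elliptic regularization, in the style of the boundedness-by-entropy method) is genuinely different from the paper's, which stays in the original variables, regularizes the reactions to make them bounded, and builds a Leray--Schauder fixed point around the fact that $F$ is a homeomorphism on $\R_+^3$ (Lemma \ref{lem.hom}), together with $W^{2,p}$ elliptic regularity and an $L^\infty$ bound. Your route is plausible but leaves nontrivial details unaddressed (invertibility of the regularized entropy-variable map on the whole range of the fixed-point iterates, and the treatment of the unbounded reaction terms inside the fixed-point argument), whereas the paper's regularization $Q_i^\eta$ makes the reactions bounded from the outset.
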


The proof is based on a regularization procedure, entropy estimates, and
a duality method. More precisely, we replace the time derivative by the
implicit Euler discretization with time step size $\tau>0$ and regularize
the reactions $Q_i$ with parameter $\eta>0$ to make them bounded (say, $Q_i^\eta$). 
The existence
of solutions $u_i^k$, which approximate $u_i(\cdot,k\tau)$, 
is shown by techniques similarly as in \cite{DLMT15}.
Using a regularized version of the entropy \eqref{1.h}, $h^\eta$, we derive the
discrete entropy inequality (see Lemma \ref{lem.ent})
\begin{equation}\label{1.disent}
\begin{aligned}
  \int_\Omega h^\eta(u^k)dx &+ \tau\sum_{j=1}^k\sum_{i=1}^3\int_\Omega
	|\na [J_i(u_i^j)]|^2 dx \\
	&{}+ \frac{\tau}{\eps}\sum_{j=1}^k\int_\Omega Q^\eta(u^\eps)\cdot(h^\eta)'(u^\eps)dx
	\le \int_\Omega h(u^I)dx.
\end{aligned}
\end{equation}
This gives {\it{a priori}}
estimates independent of the regularization parameters $\eta$ and $\tau$
as well as the relaxation time $\eps$. Further $L^2$ bounds are obtained from 
the duality method of \cite{PiSc97}, here in the discrete version of 
\cite[Lemma 2.12]{DLMT15}. Thanks to the discrete Aubin-Lions lemma of \cite{DrJu12},
we obtain the relative compactness of the sequence of approximate solutions.
This allows us to perform the limit $(\eta,\tau)\to 0$ in the approximate problem.
\medskip 

The second main result is the fast-reaction limit.

\begin{theorem}[Fast-reaction limit]\label{thm.fast}
Let $\Omega$ be a bounded open subset of $\R^d$ with a smooth boundary, $T>0$, and let assumptions (A1)-(A6) hold. We suppose that
$q_1(u_1^I)=q_2(u_2^I)q_3(u_3^I)$ in $\Omega$, and that the functions
\begin{equation}\label{1.a}
  \R_+^2\to\R, \quad (u_2,u_3)\mapsto \frac{1}{u_i}q_1^{-1}(q_2(u_2)q_3(u_3)),
	\quad i=2,3,
\end{equation}
are continuous. Furthermore, let $u^\eps=(u_1^\eps,u_2^\eps,u_3^\eps)$ be the
very weak solution to \eqref{1.eq}-\eqref{1.bic} constructed in Theorem \ref{thm.ex}.
Then there exists a subsequence, which is not relabeled, such that, as $\eps\to 0$,
$$
  u_i^\eps\to u_i\quad\mbox{strongly in }L^1(Q_T),\ i=1,2,3.
$$
The limit $u_i \in L^1(Q_T)$ is a very weak solution to the system 
\begin{align}
  & \pa_t(u_1+u_2) = \Delta(F_1(u)+F_2(u)), \quad
	\pa_t(u_1+u_3) = \Delta(F_1(u)+F_3(u)), \label{1.eps1} \\
	& q_1(u_1) = q_2(u_2)q_3(u_3)\quad\mbox{in }Q_T. \label{1.eps2}
\end{align}
Moreover, it satisfies the entropy inequality
\begin{equation}\label{1.ent2}
  \int_\Omega h(u(t))dx 
	+ \delta\,\int_0^t\int_\Omega\sum_{i=1}^3 |\na J_i(u_i)|^2dxd\sigma
	\le \int_\Omega h(u^I)dx,
\end{equation}
where $h$ and $J_i$ are defined in \eqref{1.h} and \eqref{1.J}, respectively.
\end{theorem}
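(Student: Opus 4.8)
The plan is to pass to the limit $\eps \to 0$ in the very weak formulation \eqref{1.weak}, using the uniform bounds furnished by Theorem \ref{thm.ex}. The entropy inequality \eqref{1.ent} is uniform in $\eps$ (the right-hand side $\int_\Omega h(u^I)dx$ does not depend on $\eps$), so first I would extract from it the $\eps$-independent bounds on $\int_\Omega h(u^\eps(t))dx$ and, crucially, on $\sum_i \|\na J_i(u_i^\eps)\|_{L^2(Q_T)}$. Combined with the duality estimate of \cite{PiSc97} (which gives a uniform $L^2(Q_T)$ bound on $u_i^\eps$ and on $F_i(u^\eps)$), these yield weak compactness. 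The essential extra ingredient for the limit is \emph{strong} $L^1$ convergence of $u_i^\eps$: for this I would combine the gradient bound on $J_i(u_i^\eps)$ with a uniform-in-time estimate to invoke an Aubin--Lions-type argument. Since $J_i$ is strictly increasing (its integrand is positive by (A1) and (A3)), control of $\na J_i(u_i^\eps)$ plus compactness in time transfers, after composing with the continuous inverse $J_i^{-1}$, to strong convergence of $u_i^\eps$ itself along a subsequence.

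Second, I would identify the limiting equations. For \eqref{1.eps1}, note that adding the equations for $u_1^\eps$ and $u_2^\eps$, and for $u_1^\eps$ and $u_3^\eps$, cancels the singular reaction terms $Q_i(u^\eps)/\eps$ exactly, because $Q_1 = -(Q_2) = -(Q_3)$ by \eqref{1.Q}. Thus the sums $u_1^\eps + u_2^\eps$ and $u_1^\eps + u_3^\eps$ satisfy \eqref{1.weak}-type identities with \emph{no} $\eps^{-1}$ factor, and passing to the limit there only requires the strong $L^1$ convergence of $u_i^\eps$ together with the (a.e.\ and dominated, via the $L^2$ duality bound) convergence $F_i(u^\eps) \to F_i(u)$, which holds since the $F_i$ are continuous. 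This produces the weak formulation of \eqref{1.eps1}.

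Third, I would derive the algebraic constraint \eqref{1.eps2}. Here the key is that the entropy dissipation controls the reaction through the term $\frac{\tau}{\eps}\sum_j \int_\Omega Q^\eta(u^\eps)\cdot (h^\eta)'(u^\eps)dx$ appearing in \eqref{1.disent}; in the limiting version this forces $\int_0^T\int_\Omega \big(q_1(u_1^\eps) - q_2(u_2^\eps)q_3(u_3^\eps)\big)\big(\log q_1(u_1^\eps) - \log q_2(u_2^\eps) - \log q_3(u_3^\eps)\big)dxdt = O(\eps) \to 0$. Since the factor $(a-b)(\log a - \log b) \ge 0$ vanishes only when $a = b$, and using the strong $L^1$ convergence plus continuity of $q_i$, I would conclude that $q_1(u_1) = q_2(u_2)q_3(u_3)$ a.e.\ in $Q_T$. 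Finally, the entropy inequality \eqref{1.ent2} is obtained by passing to the $\liminf$ in \eqref{1.ent}: the entropy term converges by strong convergence and continuity of $h$, the dissipation term $\sum_i \|\na J_i(u_i^\eps)\|_{L^2}^2$ is lower semicontinuous under weak convergence (here one uses that $\na J_i(u_i^\eps) \rightharpoonup \na J_i(u_i)$, which follows once $J_i(u_i^\eps) \to J_i(u_i)$ strongly in $L^2$ and the gradients are bounded), and the right-hand side is unchanged.

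The main obstacle I anticipate is upgrading weak to strong convergence of $u_i^\eps$ while simultaneously controlling the singular reaction term. The reaction rate $Q_i/\eps$ blows up, so the time-derivative bounds on the individual $u_i^\eps$ are \emph{not} uniform in $\eps$; only the combinations $u_1^\eps+u_2^\eps$ and $u_1^\eps+u_3^\eps$ have $\eps$-uniform time regularity. The delicate point is therefore to recover compactness of the individual components: one must exploit the spatial gradient control of $J_i(u_i^\eps)$ together with the algebraic structure (the constraint \eqref{1.eps2} expressing $u_1$ in terms of $u_2,u_3$, and the continuity hypothesis \eqref{1.a}) to pin down all three limits from the two well-behaved sums, rather than attempting a naive Aubin--Lions argument on each $u_i^\eps$ separately. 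Handling this interplay carefully, so that the limit reaction identity and the strong convergence are established consistently, is where the technical heart of the proof lies.
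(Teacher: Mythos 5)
Your overall roadmap matches the paper's: $\eps$-uniform bounds from the entropy and duality estimates, cancellation of the singular reactions in the sums $u_1^\eps+u_2^\eps$ and $u_1^\eps+u_3^\eps$, the entropy dissipation forcing $q_1(u_1^\eps)-q_2(u_2^\eps)q_3(u_3^\eps)\to 0$, and lower semicontinuity for the limit entropy inequality. But there is a genuine gap at precisely the step you defer: how the strong $L^1$ convergence of the \emph{individual} components is obtained. Your first paragraph claims this follows from the gradient bound on $J_i(u_i^\eps)$ together ``with a uniform-in-time estimate'' via an Aubin--Lions argument and the inverse $J_i^{-1}$; as you yourself concede in the final paragraph, no such uniform-in-time estimate exists for the individual $u_i^\eps$ (their time derivatives carry the factor $\eps^{-1}$), so this argument cannot be run, and the proposal never replaces it with a working one. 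Saying that one ``must exploit the algebraic structure and the continuity hypothesis \eqref{1.a} to pin down all three limits from the two well-behaved sums'' names the obstacle but does not resolve it---and this resolution is the technical heart of the paper's proof.

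The missing idea is the paper's Lemma \ref{lem.inv}: the map $g(u_2,u_3)=\big(u_2+q_1^{-1}(q_2(u_2)q_3(u_3)),\,u_3+q_1^{-1}(q_2(u_2)q_3(u_3))\big)$ is a homeomorphism of $\R_+^2$; this is exactly where hypothesis \eqref{1.a} and Assumption (A3) enter, via Proposition 6.1 of \cite{DLMT15}, since $g$ has the diagonal form $(a_2u_2,a_3u_3)$ with $a_i\ge 1$ continuous and Jacobian determinant $\ge 1$. Given this lemma, the argument closes as follows: the reaction estimate $\|q_1(u_1^\eps)-q_2(u_2^\eps)q_3(u_3^\eps)\|_{L^1(Q_T)}\le C\sqrt{\eps}$ (obtained from the dissipation via the elementary inequality $4(\sqrt a-\sqrt b)^2\le(a-b)\log(a/b)$ together with (A4) and the duality bound---a quantitative step that your remark ``vanishes only when $a=b$'' glosses over) gives $u_1^\eps-q_1^{-1}(q_2(u_2^\eps)q_3(u_3^\eps))\to 0$ in $L^1(Q_T)$, hence $g(u_2^\eps,u_3^\eps)\to(v_2,v_3)$ a.e.\ along a subsequence, whence $(u_2^\eps,u_3^\eps)=g^{-1}\big(g(u_2^\eps,u_3^\eps)\big)\to g^{-1}(v_2,v_3)$ a.e.\ by continuity of $g^{-1}$, and the uniform $L^2$ bounds upgrade this to strong $L^1(Q_T)$ convergence. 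Note also the ordering this imposes: the constraint estimate is an \emph{input} to the compactness of the individual components, not, as your third paragraph has it, a consequence of their strong convergence; without Lemma \ref{lem.inv} your proposal's logic is circular.
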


The proof is based on the following ideas: From the entropy inequality (see the discrete version \eqref{1.disent}), we
deduce immediately that 
\begin{equation}\label{1.u1}
  q_1(u_1^\eps)-q_2(u_2^\eps)q_3(u_3^\eps) \to 0\quad\mbox{strongly in }L^1(Q_T). 
\end{equation}
Here, we need the condition $q_1(u_1^I)=q_2(u_2^I)q_3(u_3^I)$, which prevents
a boundary (more precisely, an initial) layer.
We cannot directly apply the Aubin-Lions lemma to $u_i^\eps$, since the
bounds for $\pa_t u_i^\eps$ depend on $\eps$. However, $\pa_t(u_1^\eps+u_2^\eps)$
and $\pa_t(u_1^\eps+u_3^\eps)$ are uniformly bounded, showing, together
with the gradient estimate from \eqref{1.ent}, that $u_1^\eps+u_2^\eps\to v$
and $u_1^\eps+u_3^\eps \to w$ strongly in $L^1(Q_T)$. The key idea is to prove that
the mapping $(u_2,u_3)\mapsto (u_1+u_2,u_1+u_3)$ 
can be inverted (see Lemma \ref{lem.inv}). 
For this argument, we need the continuity of the functions in \eqref{1.a}.
We deduce that $u_2^\eps\to u_2$ and $u_3^\eps\to u_3$
a.e.\ in $Q_T$ and consequently $u_1^\eps=q_1^{-1}(q_2(u_2^\eps)q_3(u_3^\eps))\to u_1$
a.e.\ in $Q_T$. 

The paper is organized as follows. Theorems \ref{thm.ex} and \ref{thm.fast}
are proved in Sections \ref{sec.ex} and \ref{sec.fast}, respectively.
In the appendix, we collect some technical and auxiliary results.

%%%%%%%%%%%%%%%%%%%%%%%%%%%%%%%%%%%%%%%%%%%%%%%%%%%%%%%%%%%%%%%%%%%%%%%%%%%%%%%

\section{Proof of the existence result}\label{sec.ex}

We start here the 
\medskip

{\it{Proof of Theorem \ref{thm.ex}}}: We first show the existence of solutions
to an approximate problem. Let $T>0$, $N\in\N$, $\tau=T/N$, and
$\eta\in(0,1)$. We assume throughout this section that Assumptions (A1)-(A6) hold.
Given $u^{k-1}=(u_1^{k-1},u_2^{k-1},u_3^{k-1})\in L^\infty(\Omega;\R_+^3)$,
we wish to solve the following implicit Euler scheme with bounded reaction terms:
\begin{equation}\label{2.approx}
  \tau^{-1}(u_i^k-u_i^{k-1}) - \Delta F_i(u^k) = Q_i^\eta(u^k)\quad\mbox{in }
	\Omega,\ i=1,2,3,
\end{equation}
together with the no-flux boundary conditions
\begin{equation}\label{2.bc}
  \na F_i(u^k)\cdot\nu = 0\quad\mbox{on }\pa\Omega,\ i=1,2,3.
\end{equation}
When $k=1$, we set $u^{k-1}=u^I$. The regularized reaction terms are defined by
$$
  Q_i^\eta(u^k) = \frac{\sigma_i}{\eps}
	\bigg(\frac{q_1(u_1^k)}{1+\eta q_1(u_1^k)}
	- \frac{q_2(u_2^k)}{1+\eta q_2(u_2^k)}
	\frac{q_3(u_3^k)}{1+\eta q_3(u_3^k)}\bigg),
$$
where $\sigma_1=-1$ and $\sigma_2=\sigma_3=1$. 
They satisfy the following properties. First, a straightforward estimation gives
\begin{equation}\label{2.Q1}
  |Q_i^{\eta}(s)|\le \frac{1}{\eps}\frac{1+\eta}{\eta^2} \le \frac{2}{\eps\eta^2}
	=: K_1(\eps,\eta)	\quad\mbox{for all }s_i\ge 0,\ i=1,2,3.
\end{equation}
Second, let $Q_{i}^\eta=Q_{i,+}^\eta-Q_{i,-}^\eta$, where $Q_{i,+}^\eta\ge 0$
and $Q_{i,-}^\eta\ge 0$. Then there exists a constant $K_2(\eps,\eta)>0$ such that
\begin{equation}\label{2.Q2}
  Q_{i,-}^\eta(s)\le K_2(\eps,\eta)s_i \quad\mbox{for all }s_i\ge 0,\ i=1,2,3.
\end{equation}
Indeed, this estimate is clear for large values of $s_i$ because of the boundedness
of $Q_{i,-}^\eta$; for small values of $s_i$, it follows from $q_i(s_i)=q_i(0)
+ q'_i(\xi)s_i = q'_i(\xi)s_i$ for some $0<\xi<s_i$.

We also need the following property of $F$ (shown in Lemma \ref{lem.hom}
in the appendix): $F:\R_+^3\to\R_+^3$ is a $C^1$-diffeomorphism from $(\R_+^*)^3$
into itself and a homeomorphism from $\R_+^3$ into itself.

\subsection{Existence for scheme \eqref{2.approx}.}

We prove that there exists a strong solution to \eqref{2.approx}-\eqref{2.bc} (under the assumptions of Theorem \ref{thm.ex}).

\begin{lemma}\label{lem.approx}
Let $0<\tau\le 1/K_1(\eps,\eta)=\eps\eta^2/2$.
Then there exists a solution $u^k\in C^0(\overline\Omega;\R^3)$ to
\eqref{2.approx}-\eqref{2.bc} such that $F(u^k)\in W^{2,p}(\Omega;$ $\R^3)$
for all $p<\infty$.
\end{lemma}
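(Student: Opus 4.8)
The plan is to remove the strong coupling in \eqref{2.approx} by working with the variable $w=F(u^k)$. Since Lemma~\ref{lem.hom} tells us that $F$ is a homeomorphism of $\R_+^3$ and a $C^1$-diffeomorphism of its interior, I can write $u^k=F^{-1}(w)$ and recast \eqref{2.approx}--\eqref{2.bc} as the semilinear Neumann system
\[
  -\Delta w_i + \frac1\tau F_i^{-1}(w) = Q_i^\eta(F^{-1}(w)) + \frac1\tau u_i^{k-1}
  \quad\text{in }\Omega,\qquad \na w_i\cdot\nu=0 \text{ on } \pa\Omega.
\]
The decisive gain of the $\eta$-regularization is that the entire right-hand side is bounded in $L^\infty(\Omega)$, \emph{independently of the solution}: the reaction $Q_i^\eta$ is bounded by $K_1(\eps,\eta)$ by \eqref{2.Q1}, and $u^{k-1}\in L^\infty(\Omega)$ by hypothesis. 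Thus the Laplacian now acts diagonally on $w$, and the only remaining nonlinearity inside the differential operator is the zeroth-order term $\tau^{-1}F^{-1}(w)$.

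I would establish existence of $w$ by a topological fixed-point argument of Leray--Schauder type, as in \cite{DLMT15}. Introduce the solution operator $S=(-\Delta+\mathrm{id})^{-1}$ for the homogeneous Neumann problem, which is well defined on $L^2(\Omega)$ and, by elliptic regularity together with the compact embedding of $H^2(\Omega)$ into $L^2(\Omega)$, is compact. Define a family of maps $\mathcal T_\sigma$, $\sigma\in[0,1]$, by letting $w=\mathcal T_\sigma(\bar w)$ solve componentwise
\[
  -\Delta w_i + w_i = \sigma\Big(Q_i^\eta(F^{-1}(\bar w)) + \tfrac1\tau u_i^{k-1} - \tfrac1\tau F_i^{-1}(\bar w) + \bar w_i\Big),
\]
so that a fixed point at $\sigma=1$ is exactly a solution of the recast system, while $\sigma=0$ forces $w=0$. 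Each $\mathcal T_\sigma$ is continuous and compact, because the substitution $\bar w\mapsto F^{-1}(\bar w)$ is continuous (continuity of $F^{-1}$ from Lemma~\ref{lem.hom}) and maps into $L^\infty(\Omega)$, while $S$ smooths. What remains is to produce an a priori bound on all fixed points, uniform in $\sigma$.

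For the a priori estimate I would test the identity satisfied by a fixed point with $u_i=F_i^{-1}(w)$ and sum over $i$. The diffusion term then produces a quadratic form in $\na u$ governed by the symmetrized Jacobian of $F$, which is controlled through the weak-cross-diffusion structure (assumptions (A1), (A2), (A5) and the positivity of $\det F'$ in \eqref{1.det}); the zeroth-order term contributes $\tau^{-1}\sum_i\|u_i\|_{L^2}^2$; and the right-hand side, being bounded in $L^\infty$, is absorbed by Young's inequality. This yields an $L^2$ bound on $u$ uniform in $\sigma$, where the smallness $\tau\le 1/K_1(\eps,\eta)$ guarantees that the reaction contribution $\tau Q^\eta$ stays bounded by $1$. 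Because the source $Q_i^\eta(F^{-1}(w))+\tau^{-1}u_i^{k-1}$ is bounded in every $L^p$ regardless of $u$, a standard elliptic bootstrap upgrades the $H^2$-regularity of $w$ to $w\in W^{2,p}(\Omega;\R^3)$ for all $p<\infty$. The Leray--Schauder theorem then provides a fixed point $w$, and $u^k:=F^{-1}(w)\in C^0(\overline\Omega;\R^3)$ by continuity of $F^{-1}$ (and the embedding of $W^{2,p}$ into $C^0$), with $F(u^k)=w\in W^{2,p}$ for all $p<\infty$. Nonnegativity of $u^k$, where needed, follows from a Stampacchia truncation using the control \eqref{2.Q2} of the negative part of the reaction together with the time-step restriction.

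The main obstacle is precisely this a priori estimate: the coupling $F^{-1}(w)$ is neither globally Lipschitz nor obviously monotone as a vector field, so controlling the sign of the diffusion quadratic form and confining the fixed points to the region where $F^{-1}$ is well behaved is the delicate point. This is exactly where the structural hypotheses must be exploited carefully—positivity of $\det F'$ from \eqref{1.det} underlying Lemma~\ref{lem.hom}, the lower bound $f_i'\ge\kappa_1$ from (A1), the weak-cross-diffusion inequality (A5), and the smallness of $\tau$—and where I would expect to have to argue most carefully before the compactness machinery can be applied.
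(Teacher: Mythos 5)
There are two genuine gaps in your argument, both at the point you yourself flag as delicate. First, your a priori estimate fails: testing the recast equation with $u_i=F_i^{-1}(w)$ and summing produces the term $\sum_{i}\int_\Omega\na F_i(u)\cdot\na u_i\,dx=\int_\Omega \na u:F'(u)\na u\,dx$, and to discard or keep it you need the \emph{symmetric part} of $F'(u)$ to be positive semidefinite. This is not implied by (A1), (A2), (A5) or \eqref{1.det}: Assumption (A5) controls the entropy-weighted form $(h^\eta)''(u)F'(u)$, where the weights $q_i'/(q_i(1+\eta q_i))$ may differ wildly between the two components, and \eqref{1.det} controls only the determinant. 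A nonsymmetric matrix with positive diagonal and positive determinant can have an indefinite symmetric part (think of $\left(\begin{smallmatrix}1&10\\0&1\end{smallmatrix}\right)$); within (A2) one can have $\pa_2 f_{12}$ large and $\pa_1 f_{21}$ tiny while (A5) still holds, provided $q_2'/q_2\gg q_1'/q_1$. The paper's proof deliberately never needs any sign information on a quadratic form in $\na u$: it \emph{sums} the three equations (the regularized reactions are bounded by \eqref{2.Q1}, so they are harmless), multiplies by $|F(u)|_1=\sum_i F_i(u)$, whose diffusion contribution is the perfect square $\tau\int_\Omega|\na|F(u)|_1|^2dx\ge 0$, derives an $L^1$ bound on $F(u)$ by the splitting $\{|u|_1>R\}\cup\{|u|_1\le R\}$ with a modulus of continuity, and then obtains the $L^\infty$ bound on $u$ from the maximum-principle elliptic estimate of Lemma \ref{lem.regul2}. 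Note also that even if your $L^2$ bound on $u$ held, it would not close the Leray--Schauder argument: (A1)--(A2) impose no upper growth bound on $F$, so $\|u\|_{L^2}$ controls neither $\|F(u)\|_{L^1}$ nor the mean of $w_i$ in the Neumann problem at $\sigma=1$; a bound on $F(u)$ itself is indispensable, and producing it is exactly the content of the paper's Step 2.

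Second, your operator $\mathcal T_\sigma$ is not well defined as stated. By Lemma \ref{lem.hom}, $F^{-1}$ exists only on the cone $\R_+^3$, but the output of your linear solve need not be componentwise nonnegative: the right-hand side $\sigma\big(Q_i^\eta(F^{-1}(\bar w))+\tau^{-1}u_i^{k-1}-\tau^{-1}F_i^{-1}(\bar w)+\bar w_i\big)$ has no sign, since both $Q_i^\eta$ and $-\tau^{-1}F_i^{-1}(\bar w)$ can be negative, so at the next application $F^{-1}$ is fed a function outside its domain. A Stampacchia truncation applied a posteriori to fixed points does not repair the operator itself. This is precisely what the paper's construction is engineered to handle: the shift $\overline{M}(u)$, chosen via the bound \eqref{2.Q2} on the negative part $Q_{i,-}^\eta$, guarantees that the input of the resolvent $(\overline{M}\operatorname{Id}-\tau\Delta)^{-1}$ is nonnegative, the maximum principle propagates nonnegativity through the resolvent, and only then is $\Phi=F^{-1}$ applied. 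Without importing both of these ideas (the summed-equation duality-type estimate and the nonnegativity-preserving shift), your scheme does not go through.
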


\begin{proof}
The proof is a modification of the proof of Theorem 2.5 in \cite{DLMT15}.
Since our estimates are partially different, we present
a full proof. The idea is to define a fixed-point operator whose compactness
follows from the compactness of an elliptic solution operator.

{\em Step 1: Definition of the fixed-point operator.}
Let $K_{p,\Omega}>0$ be the elliptic regularity constant defined in Lemma
\ref{lem.regul1}. Introduce for $u=(u_1,u_2,u_3)\in L^\infty(\Omega;\R^3)$,
$$
  \overline{M}(u) := \max\bigg\{\tau K_{p,\Omega},\frac{1}{\kappa_1}\bigg(
	1+\tau\max_{i=1,2,3}\sup_\Omega\frac{Q_{i,-}^\eta(u)}{|u_i|}\bigg)\bigg\}.
$$
The constant $\kappa_1$ is defined in Assumption (A1).
Because of \eqref{2.Q2}, $Q_{i,-}^\eta(u)/|u_i|$ is finite and so, $\overline{M}(u)$
is finite too. 

We define the fixed-point operator $\Lambda:[0,1]\times
L^\infty(\Omega;\R^3)\to L^\infty(\Omega;\R^3)$ by
$$
  \Lambda(\sigma,u) = \Phi\circ(\sigma\Theta)\circ\Psi(u),
$$
where (recall that $F_i$ and $Q^\eta$ are continuously extended to $\R^3$ 
by setting $F_i(u)=F_i(|u_1|,|u_2|,$ $|u_3|)$, 
$Q^\eta_i(u)=Q^\eta_i(|u_1|,|u_2|,|u_3|)$),
\begin{align*}
  & \Psi:L^\infty(\Omega;\R^3)\to L^\infty(\Omega;\R_+^3)
	\times(\tau K_{p,\Omega},\infty), \\
	&\phantom{xxx}
	\Psi(u) = \big(u^{k-1}+\overline{M}(u)F(u)-u+\tau Q^\eta(u),\overline{M}(u)\big), \\
	& \Theta: L^\infty(\Omega;\R_+^3)\times(\tau K_{p,\Omega},\infty)
	\to L^\infty(\Omega;\R_+^3), \\
	&\phantom{xxx}
	\Theta(u,M) = (M\operatorname{Id}-\tau\Delta)^{-1}u
	\mbox{ with no-flux boundary conditions}, \\
	& \Phi: L^\infty(\Omega;\R_+^3)\to L^\infty(\Omega;\R_+^3), \quad
	\Phi(u) = F^{-1}(u).
\end{align*}
A computation shows that any fixed point of $\Lambda(\sigma,\cdot)$ solves
\begin{equation}\label{2.fpe}
  \overline{M}(u)F_i(u) - \tau\Delta F_i(u) 
	= \sigma\big(u_i^{k-1} + \overline{M}(u)F_i(u) - u_i + \tau Q_i^\eta(u)\big),
	\quad i=1,2,3,
\end{equation}
which means that for $\sigma=1$, this fixed point solves 
\eqref{2.approx}-\eqref{2.bc}. 

We have to show that the functions $\Phi$,
$\Theta$, and $\Psi$ are well defined.
Indeed, by Assumption (A1), the definition of $\overline{M}(u)$, and the
property $F_i(u)\ge\kappa_1 |u_i|$,
\begin{align*}
  & \overline{M}(u) F_i(u)-u_i+\tau Q_i^\eta(u) \\
	&\ge |u_i|\bigg(1 + \tau\max_{i=1,2,3}\sup_\Omega\frac{Q_{i,-}^\eta(u)}{|u_i|}\bigg)
	- |u_i| + \tau\big(Q_{i,+}^\eta(u)-Q_{i,-}^\eta(u)\big) 
	\ge \tau Q_{i,+}^\eta(u) \ge 0.
\end{align*}
We deduce that $\Psi$ is well defined. If $\Theta(u,M)=v$ for some
$u\in L^\infty(\Omega;\R_+^3)$ then $Mv-\tau\Delta v=u\ge 0$ in $\Omega$
and $\na v\cdot\nu=0$ on $\pa\Omega$. Using 
$v^-=\min\{0,v\}$ as a test function in the weak formulation of this
elliptic equation, we see that $v\ge 0$ in $\Omega$. Furthermore,
with the test function $(v-\mu)^+=\max\{0,v-\mu\}$, where 
$\mu=\|u\|_{L^\infty(\Omega)}/M$, it follows that
$$
  \tau\int_\Omega|\na(v-\mu)^+|^2 dx
	= \int_{\{v>\mu\}}(u-Mv)(v-\mu)^+ dx 
	\le 0,
$$
and hence, $\|v\|_{L^\infty(\Omega)}\le\|u\|_{L^\infty(\Omega)}/M$.
This shows that $\Theta$ is well defined. Finally, $\Phi$ is well defined
since $F$ is a homeomorphism on $\R^3_+$; see Lemma \ref{lem.hom}.

We check the properties of $\Lambda$ needed to apply the Leray-Schauder
fixed-point theorem. Clearly, $\Lambda(0,u)=0$ for all $u\in L^\infty(\Omega;\R^3)$.
The continuity of $\Lambda$ follows from the continuity of the functions
$\Psi$, $\Theta$, and $\Phi$ proved in Lemma 2.6 in \cite{DLMT15}. 
By Lemma \ref{lem.regul1} of the appendix, $\Theta(u,M)\in W^{2,p}(\Omega;\R^3)$
for any $p<\infty$. Since the embedding $W^{2,p}(\Omega)\hookrightarrow
L^\infty(\Omega)$ is compact for $p>d/2$, 
we deduce that $\Theta:L^\infty(\Omega;\R_+^3)
\times(\tau K_{p,\Omega},\infty)\to L^\infty(\Omega;\R_+^3)$ is compact too,
and the same holds for $\Lambda$.

It remains to show a uniform $L^\infty$ estimate for any fixed point $u$
(that is, such that $\Lambda(\sigma,u)=u$).
Note that any fixed point is nonnegative and thus, $F_i(u)\ge 0$ in $\Omega$.

{\em Step 2: $L^1$ estimate for $F_i(u)$.}
We claim that there exists a constant $C>0$, depending
on $\|u^{k-1}\|_{L^\infty(\Omega)}$ and $\Omega$, such that
\begin{equation}\label{2.F}
  0\le\sum_{i=1}^3\int_\Omega F_i(u)dx\le C.
\end{equation}
Indeed, the fixed point $u$ solves \eqref{2.fpe} in $\Omega$ and $\na F_i(u)\cdot\nu=0$
on $\pa\Omega$. Summing \eqref{2.fpe} for $i=1,2,3$ and denoting
$|F(u)|_1 := \sum_{i=1}^3 F_i(u)$, \eqref{2.fpe} leads to
\begin{equation}\label{2.sum}
  (1-\sigma)\overline{M}(u)|F(u)|_1 + \sigma|u|_1 - \tau\Delta|F(u)|_1
	= \sigma|u^{k-1}|_1 + \sigma\tau |Q^\eta(u)|_1.
\end{equation}
Multiplying this equation by $|F(u)|_1$ and integrating over $\Omega$ yields
\begin{equation}\label{2.aux}
\begin{aligned}
  (1-\sigma)&\int_\Omega\overline{M}(u)|F(u)|_1^2 dx
	+ \tau\int_\Omega|\na|F(u)|_1|^2 dx 
	+ \sigma\int_\Omega|u|_1|F(u)|_1 dx \\
	&= \sigma\int_\Omega|u^{k-1}|_1|F(u)|_1 dx 
	+ \sigma\tau\int_\Omega|Q^\eta(u)|_1|F(u)|_1 dx.
\end{aligned}
\end{equation}
If $\sigma=0$, we have $F_i(u)=0$ for $i=1,2,3$, and $u=0$. Therefore we consider
 $\sigma\neq 0$. Neglecting the first two integrals in \eqref{2.aux} 
and dividing this equation by $\sigma$, it follows that
$$
  \int_\Omega|u|_1|F(u)|_1 dx
	\le \int_\Omega|u^{k-1}|_1|F(u)|_1 dx 
	+ \tau\int_\Omega|Q^\eta(u)|_1|F(u)|_1 dx.
$$
Then, by \eqref{2.Q1} and $\tau K_1(\eps,\eta)\le 1$,
\begin{align*}
  \int_\Omega|u|_1|F(u)|_1 dx
	&\le 3\|u^{k-1}\|_{L^\infty(\Omega)}\int_\Omega|F(u)|_1 dx
	+ 3\tau K_1(\eps,\eta)\int_\Omega|F(u)|_1 dx \\
	&\le 3\big(\|u^{k-1}\|_{L^\infty(\Omega)}+1\big)\int_\Omega|F(u)|_1 dx.
\end{align*}

Let $R>0$. If $|u|_1\le R$, the continuity of $F_i$ gives $|F(u)|_1\le \omega(R)$,
where $\omega$ is a modulus of continuity. Therefore,
\begin{align*}
   \int_\Omega|u|_1|F(u)|_1 dx
	&\le 3\big(\|u^{k-1}\|_{L^\infty(\Omega)}+1\big)
	\bigg(\int_{\{|u|_1>R\}}|F(u)|_1 dx + \int_{\{|u|_1\le R\}}|F(u)|_1 dx\bigg) \\
	&\le 3\big(\|u^{k-1}\|_{L^\infty(\Omega)}+1\big)
	\bigg(\frac{1}{R}\int_\Omega |u|_1|F(u)|_1 dx + \omega(R)|\Omega|\bigg),
\end{align*}
where $|\Omega|$ denotes the measure of $\Omega$.
We choose $R=6(\|u^{k-1}\|_{L^\infty(\Omega)}+1)$ and obtain
$$
  \frac12\int_\Omega|u|_1|F(u)|_1 dx 
	\le 3\big(\|u^{k-1}\|_{L^\infty(\Omega)}+1\big)\omega(R)|\Omega|
	= \frac{R}{2}\omega(R)|\Omega|.
$$
We use this estimate in
\begin{align*}
  \int_\Omega|F(u)|_1 dx
	&= \int_{\{|u|_1>R\}}|F(u)|_1 dx + \int_{\{|u|_1\le R\}}|F(u)|_1 dx \\
	&\le \frac{1}{R}\int_\Omega|u|_1|F(u)|_1 dx + \omega(R)|\Omega|
  \le 2\omega(R)|\Omega|,
\end{align*}
which proves \eqref{2.F}.

{\em Step 3: $L^\infty$ estimate for $u_i$.}
We use estimate \eqref{2.Q1} for $Q^\eta$ in \eqref{2.sum}:
$$
  -\tau\Delta|F(u)|_1
  \le (1-\sigma)\overline{M}(u)|F(u)|_1 + \sigma|u|_1 - \tau\Delta|F(u)|_1
	\le \sigma|u^{k-1}|_1 + 3\sigma\tau K_1(\eps,\eta).
$$
As the right-hand side is in $L^\infty(\Omega)$, 
we can apply Lemma \ref{lem.regul2} in the appendix to conclude that
$$
  \||F(u)|_1\|_{L^\infty(\Omega)} 
	\le C\bigg(\frac{3}{\tau}\|u^{k-1}\|_{L^\infty(\Omega)} + 3K_1(\eps,\eta)
	+ \||F(u)|_1\|_{L^1(\Omega)}\bigg).
$$
Then, taking into account Assumption (A1) and \eqref{2.F}, 
$$
  \|u_i\|_{L^\infty(\Omega)}\le \kappa_1\||F(u)|_1\|_{L^\infty(\Omega)}
	\le C(\eps,\eta,\Omega,\|u^{k-1}\|_{L^\infty(\Omega)}),
$$
which shows the desired estimate (uniform with respect to the considered fixed points). 

Hence, we can apply the Leray-Schauder theorem and infer the existence of
a solution to \eqref{2.approx}-\eqref{2.bc}.

It remains to verify the continuity of $u_i^k$. We know that 
$F_i(u^k)\in W^{2,p}(\Omega)$ for all $p<\infty$. Thus, choosing $p>d/2$, 
$F_i(u^k)\in C^0(\overline\Omega)$.
Since $F$ is a homeomorphism, we conclude that $u^k_i\in C^0(\overline\Omega)$.
\end{proof}

%%%%%%%%%%%%%%%%%

\subsection{A priori estimates for scheme \eqref{2.approx}.}

We show several {\it{a priori}} estimates which are (except for Lemma \ref{lem.pos} below) uniform in $\eta$ and $\tau$. 
Some of these estimates are also uniform with respect to $\eps$ and will be used in
Section \ref{sec.fast}. 
We denote by $C(\delta_1,\ldots,\delta_n)$ a generic positive constant depending 
on the parameters $\delta_1,\ldots,\delta_n$, whose value may change from
occurence to occurence. We begin with

\begin{lemma}[Positivity of $u_i^k$]\label{lem.pos}
Let $\tau<1$. Then there exists a constant $\delta(\eps,\eta,\tau)>0$ depending on 
$\eps$, $\eta$, and $\tau$ such that
\begin{equation}\label{2.delta}
  u_i^k\ge\delta(\eps,\eta,\tau)
	\quad\mbox{in }\Omega, 
	\ i=1,2,3.
\end{equation}
\end{lemma}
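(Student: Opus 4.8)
The plan is to reduce the positivity of the $u_i^k$ to a lower bound for the diffused variables $w_i:=F_i(u^k)$. The point is that, although $F_1$ and $F_2$ couple $u_1$ and $u_2$ through the cross-diffusion terms, the second-order operator acting on $w_i$ in \eqref{2.approx} is merely the plain Laplacian $-\tau\Delta$; the coupling survives only in the zeroth-order term $u_i^k=F_i^{-1}(\cdots)$, which I will sandwich by
\[
  \kappa_1 u_i^k \le F_i(u^k) \le C_M\, u_i^k .
\]
The left inequality follows from Assumption (A1) (which gives $f_i(s)\ge\kappa_1 s$) together with $f_{ij}\ge 0$. For the right inequality I use that $F_i(u)=u_i\big(g_i(u_i)+g_{ij}(u_1,u_2)\big)$ for $i=1,2$ and $F_3(u)=u_3 g_3(u_3)$; by Lemma \ref{lem.approx} the nonnegative fixed point $u^k$ lies in $C^0(\overline\Omega;\R^3)$, so its range is compact and the continuous coefficients $g_i,g_{ij}$ are bounded there by a constant $C_M=C_M(\eps,\eta,\tau,\|u^{k-1}\|_{L^\infty(\Omega)})$. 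The main obstacle, the cross-diffusion coupling, is thus circumvented by this change of unknown, and this is the only place where the structural hypotheses (A1)--(A2) are genuinely used.

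Next I would rewrite \eqref{2.approx} as $-\tau\Delta w_i+u_i^k=u_i^{k-1}+\tau Q_i^\eta(u^k)$ and estimate the reaction from below. Splitting $Q_i^\eta=Q_{i,+}^\eta-Q_{i,-}^\eta$ and invoking \eqref{2.Q2}, namely $Q_{i,-}^\eta(u^k)\le K_2(\eps,\eta)u_i^k$, gives
\[
  -\tau\Delta w_i+(1+\tau K_2)u_i^k \ge u_i^{k-1}\ge \mu_{k-1}\quad\mbox{in }\Omega,
\]
where $\mu_{k-1}>0$ is a lower bound for $u^{k-1}$: for $k=1$ this is $\kappa_2$ by Assumption (A6), and for $k\ge 2$ it is produced inductively. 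Using $u_i^k\le w_i/\kappa_1$ to replace $(1+\tau K_2)u_i^k$ by the larger quantity $c_0 w_i$ with $c_0:=(1+\tau K_2)/\kappa_1$, I arrive at the scalar inequality
\[
  -\tau\Delta w_i+c_0 w_i \ge \mu_{k-1}\quad\mbox{in }\Omega,\qquad \na w_i\cdot\nu=0\ \mbox{on }\pa\Omega,
\]
which no longer sees the coupling.

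Finally I would compare $w_i$ with the constant $\underline w:=\mu_{k-1}/c_0$, which solves $-\tau\Delta\underline w+c_0\underline w=\mu_{k-1}$ exactly. Then $z:=\underline w-w_i$ satisfies $-\tau\Delta z+c_0 z\le 0$ with $\na z\cdot\nu=0$ on $\pa\Omega$; testing with $z^+=\max\{z,0\}\in H^1(\Omega)$ yields
\[
  \tau\int_\Omega|\na z^+|^2\,dx+c_0\int_\Omega (z^+)^2\,dx\le 0,
\]
so that $z^+\equiv 0$ and $w_i\ge\underline w$. I prefer this weak comparison to a pointwise minimum principle precisely because $w_i$ is only of class $W^{2,p}$, and the test-function formulation also makes the homogeneous Neumann data trivial to use. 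Transferring back through $u_i^k\ge w_i/C_M$ gives $u_i^k\ge \kappa_1\mu_{k-1}/\big(C_M(1+\tau K_2)\big)=:\mu_k>0$. Iterating over the finitely many steps $k=1,\dots,N=T/\tau$ and setting $\delta(\eps,\eta,\tau):=\min_{1\le k\le N}\mu_k>0$ yields the claimed lower bound, uniform in $\Omega$; the dependence on $\eps,\eta,\tau$ enters through $K_2$, $C_M$, and the number of steps $N$.
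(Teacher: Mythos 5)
Your proof is correct and follows essentially the same route as the paper's: induction over $k$, the lower bound $Q_i^\eta(u^k)\ge -K_2(\eps,\eta)u_i^k$ from \eqref{2.Q2}, a weak minimum principle for the elliptic inequality satisfied by $w_i=F_i(u^k)$, and the two-sided sandwich $\kappa_1 u_i^k\le F_i(u^k)\le C_M u_i^k$ (the upper bound using the $L^\infty$/continuity of $u^k$ from Lemma \ref{lem.approx}) to transfer the bound back to $u_i^k$. The only cosmetic difference is that you spell out the comparison with a constant subsolution and never need the hypothesis $\tau<1$ (the paper discards a factor $1/\tau$ via $u_i^{k-1}/\tau\ge u_i^{k-1}$), so your constant is in fact slightly sharper.
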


\begin{proof}
We proceed by induction. By Assumption (A6),
$u_i^I\ge\kappa_2>0$ in $\Omega$. Let $u_i^{k-1}\ge\gamma>0$ for some $\gamma>0$, 
$i=1,2,3$. Using \eqref{2.Q2}, we find that
$$
  Q_i^\eta(u^k) = Q_{i,+}^\eta(u^k) - Q_{i,-}^\eta(u^k)
	\ge -K_2(\eps,\eta)u_i^k.
$$
Thus, choosing $M\ge (\tau K_2(\eps,\eta)+1)/\kappa_1$,
$$
  Q_i^\eta(u^k) + \frac{M}{\tau}F_i(u^k) 
	\ge Q_i^\eta(u^k) + \frac{\kappa_1 M}{\tau} u_i^k \ge \frac{u_i^k}{\tau},
$$
and consequently, using the scheme and the induction hypothesis,
$$
  \frac{M}{\tau}F_i(u^k) - \Delta F_i(u^k) 
	= \frac{u_i^{k-1}}{\tau} - \frac{u_i^k}{\tau} + Q_i^\eta(u^k) 
	+ \frac{M}{\tau}F_i(u^k)
	\ge \frac{u_i^{k-1}}{\tau} \ge u_i^{k-1}\ge \gamma.
$$
By the minimum principle (see Step 1 of the proof of Lemma \ref{lem.approx}
for the same argument), $F_i(u^k)\ge\tau\gamma/M$ for $i=1,2,3$. We know from
the proof of Lemma \ref{lem.approx} that $u_i^k\in L^\infty(\Omega)$.
By Assumption (A1), this gives $F_i(u^k)\le Cu_i^k$, 
where $C>0$ depends on the $L^\infty$ bound of $u^k$.
We infer that $u_i^k\ge\tau\gamma/(CM)=:\delta(\eps,\eta,\tau)>0$. 
\end{proof}

\begin{lemma}[Uniform $L^2$ estimate]\label{lem.L2}
There exists a constant $C>0$ independent of $\eps$, $\eta$, and $\tau$ such that
$$
  \tau\sum_{k=1}^N\int_\Omega\sum_{i=1}^3 F_i(u^k)\sum_{i=1}^3 u_i^k dx \le C, \quad
	\tau\sum_{k=1}^{N}\|u_i^k\|_{L^2(\Omega)}^2 \le C, \quad i=1,2,3.
$$
\end{lemma}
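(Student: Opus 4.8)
The plan is to obtain both inequalities from the Pierre--Schmitt duality method \cite{PiSc97}, in the discrete form of \cite[Lemma 2.12]{DLMT15}, but applied not to the individual equations in \eqref{2.approx} (whose reaction terms carry the singular factor $\eps^{-1}$, indeed $|Q_i^\eta|\le 2/(\eps\eta^2)$) but to a \emph{reaction-free} linear combination of them. The crucial observation is that the reaction vector in \eqref{2.approx} is proportional to $(\sigma_1,\sigma_2,\sigma_3)=(-1,1,1)$, so any combination with weights $(c_1,c_2,c_3)$ obeying $c_1=c_2+c_3$ annihilates the reactions. I would choose the strictly positive weights $(2,1,1)$ and set
\[
  \rho^k := 2u_1^k+u_2^k+u_3^k,\qquad \Sigma^k := 2F_1(u^k)+F_2(u^k)+F_3(u^k).
\]
Multiplying \eqref{2.approx} by these weights and summing over $i$, the reaction contribution is $\eps^{-1}(2\sigma_1+\sigma_2+\sigma_3)(\cdots)=0$, leaving the homogeneous discrete parabolic equation
\[
  \tau^{-1}\big(\rho^k-\rho^{k-1}\big)-\Delta\Sigma^k=0\ \ \text{in }\Omega,\qquad \na\Sigma^k\cdot\nu=0\ \ \text{on }\pa\Omega.
\]

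Next I would write $\Sigma^k=A^k\rho^k$ with $A^k:=\Sigma^k/\rho^k\in L^\infty(\Omega)$ (finite for each fixed $\eps,\eta,\tau$ since $u^k\in L^\infty$ by Lemma \ref{lem.approx}) and record the one-sided ellipticity bound $A^k\ge\kappa_1$, which follows from Assumption (A1) via $F_i(u^k)\ge\kappa_1 u_i^k$, hence $\Sigma^k\ge\kappa_1\rho^k$. This lower bound is uniform in $\eps$, $\eta$, $\tau$, and uniformity is precisely what the duality method needs: the scalar equation above has the form $\tau^{-1}(\rho^k-\rho^{k-1})-\Delta(A^k\rho^k)=0$ with $A^k\ge\kappa_1>0$ and no-flux boundary conditions. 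Applying the discrete duality lemma \cite[Lemma 2.12]{DLMT15} then yields
\[
  \tau\sum_{k=1}^N\int_\Omega A^k(\rho^k)^2\,dx=\tau\sum_{k=1}^N\int_\Omega \Sigma^k\rho^k\,dx\le C,
\]
where $C$ depends only on $\kappa_1$, $T$, $\Omega$, and the conserved total mass $\int_\Omega\rho^I\,dx=\int_\Omega(2u_1^I+u_2^I+u_3^I)\,dx$, which is bounded independently of the parameters by Assumption (A6). Neither the reaction terms nor any upper bound on $A^k$ enter the estimate, so $C$ is independent of $\eps$, $\eta$, and $\tau$.

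Finally I would convert this weighted bound into the two stated estimates. As all $F_i(u^k)$ and $u_i^k$ are nonnegative and every weight in $\rho^k,\Sigma^k$ is at least one, we have $\Sigma^k\rho^k\ge\big(\sum_i F_i(u^k)\big)\big(\sum_i u_i^k\big)$, which gives the first inequality. For the second, Assumption (A1) yields $\big(\sum_i F_i(u^k)\big)\big(\sum_i u_i^k\big)\ge\kappa_1\big(\sum_i u_i^k\big)^2\ge\kappa_1\sum_i(u_i^k)^2$, whence $\tau\sum_k\|u_i^k\|_{L^2(\Omega)}^2\le\kappa_1^{-1}C$. I expect the genuine difficulty to lie entirely in the parameter-uniformity, which rests on two points: the exact algebraic cancellation that removes the singular $\eps^{-1}$ reaction (so that the $\eta$-dependent bound $2/(\eps\eta^2)$ never surfaces), and the purely one-sided ellipticity $A^k\ge\kappa_1$, which lets the duality lemma run with no control on the possibly large upper values of $A^k$, i.e.\ on the growth of the $F_i$. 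Verifying the precise hypotheses of the cited discrete duality lemma, in particular the conservation and control of the mean of $\rho^k$ which that lemma handles internally, is the only remaining technical point.
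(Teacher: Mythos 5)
Your setup coincides with the paper's: the same weights $(2,1,1)$, the same cancellation of the $\eps^{-1}$ reaction terms, the same reformulation $\tau^{-1}(\rho^k-\rho^{k-1})=\Delta(A^k\rho^k)$, and the same final conversions to the two stated bounds via $F_i\ge\kappa_1 u_i$. However, there is a genuine gap at the central step: the discrete duality lemma you invoke (\cite[Lemma 2.12]{DLMT15}, restated as Lemma \ref{lem.dual} in the paper) does \emph{not} yield $\tau\sum_k\int_\Omega A^k(\rho^k)^2\,dx\le C$ outright. Its conclusion is
$$
  \tau\sum_{k=1}^N\int_\Omega A^k(\rho^k)^2\,dx
  \le C\bigg(1+\tau\sum_{k=1}^N\int_\Omega A^k\,dx\bigg),
$$
so one still has to control the $L^1$ norm of the diffusion coefficient $A^k=\Sigma^k/\rho^k$ uniformly in $\eps$, $\eta$, $\tau$. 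Your claim that ``neither the reaction terms nor any upper bound on $A^k$ enter the estimate'' misreads the lemma: no lower bound on $A^k$ is used by it at all (the bound $A^k\ge\kappa_1$ is needed only at the very end, to pass from the weighted estimate to the $L^2$ estimate), whereas some control of the \emph{size} of $A^k$ is indispensable. This is not a removable technicality: under the paper's assumptions the $F_i$ may grow superlinearly (e.g.\ $f_i(s)=\alpha_i s+s^\delta$), so $A^k$ behaves like $|u^k|^{\delta-1}$ where $u^k$ is large and is genuinely unbounded.

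The paper closes exactly this gap with a splitting-and-absorption argument, which is the real content of its proof: writing $\mu(L)=\sup\{\Sigma(s)/\rho(s):\,2s_1+s_2+s_3\le L\}$, which is finite because Assumptions (A1)--(A2) force each term of $F_i$ to carry a factor $s_i$ with continuous coefficient (so the ratio is controlled on the set $\{\rho^k\le L\}$), one estimates
$$
  \int_\Omega A^k\,dx\le \mu(L)|\Omega|+\frac{1}{L^2}\int_\Omega A^k(\rho^k)^2\,dx,
$$
and then chooses $L$ large enough that the factor $C/L^2$ can be absorbed into the left-hand side of the duality inequality. Your proof needs this step (or an equivalent one); as written, the constant you obtain could depend on the solution itself through $\sup A^k$, and the parameter-uniformity you identify as the crux is precisely what remains unproven. (A minor additional imprecision: the constant in the duality lemma depends on $\|\rho^0\|_{L^2(\Omega)}$, not merely on the total mass $\int_\Omega\rho^I\,dx$; this is harmless since $u^I\in L^\infty(\Omega)$ by Assumption (A6).)
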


\begin{proof}
These bounds are a consequence of the duality estimate stated in Lemma \ref{lem.dual}.
Indeed, we set $v^k=2u_1^k+u_2^k+u_3^k$ and
$$
  \mu^k = \frac{2F_1(u^k)+F_2(u^k)+F_3(u^k)}{2u_1^k+u_2^k+u_3^k}.
$$
Note that $\mu^kv^k\in H^2(\Omega)$. Then $(v^k-v^{k-1})/\tau=\Delta(\mu^k v^k)$, 
since the weighted sum of the reaction terms
vanishes. Consequently, the following estimates do not depend on $\eps$ nor $\eta$. 
Lemma \ref{lem.dual} gives
\begin{equation}\label{2.aux2}
  \tau\sum_{k=1}^{N}\int_\Omega\mu^k(v^k)^2 dx
	\le C\bigg(1 + \tau\sum_{k=1}^{N}\int_\Omega\mu^k dx\bigg),
\end{equation}
where $C>0$ depends only on $u^I$, $\Omega$, and $T=N\tau$.

It remains to estimate $\int_\Omega\mu^kdx$.
Let $L>0$ and define $S=\{s=(s_1,s_2,s_3)\in\R_+^3:2s_1+s_2+s_3\le L\}$ and
\begin{align*}
  \mu(L) 
	& := \sup_{s\in S}\frac{2F_1(s)+F_2(s)+F_3(s)}{2s_1+s_2+s_3} \\
	&= \sup_{s\in S}\frac{2s_1(g_1(s_1)+g_{12}(s_1,s_2))+s_2(g_2(s_2)+g_{21}(s_1,s_2))
	+ s_3g_3(g_3)}{2s_1+s_2+s_3}.
\end{align*}
Clearly, $\mu(L)$ is finite. It follows that
$$
  \int_\Omega\mu^k dx = \int_{\{v^k\le L\}}\mu^k dx
	+ \int_{\{v^k>L\}}\mu^kdx
	\le \mu(L)|\Omega| + \frac{1}{L^2}\int_\Omega\mu^k(v^k)^2 dx.
$$
Inserting this estimate into \eqref{2.aux2} and using $k\tau\le T$, we arrive at
$$
  \bigg(1-\frac{C}{L^2}\bigg)\tau\sum_{k=1}^{N}\int_\Omega\mu^k(v^k)^2dx
	\le C\big(1+T\mu(L)|\Omega|\big).
$$
Choosing $L>0$ sufficiently large, this yields the first estimate in the statement.
For the $L^2$ bound, we observe that $F_i(u^k)\ge\kappa_1 u_i^k$ so that,
for all $j=1,2,3$,
$$
  \tau\kappa_1\sum_{k=1}^{N}\int_\Omega (u_j^k)^2 dx 
	\le \tau\sum_{k=1}^{N}\int_\Omega\sum_{i=1}^3 F_i(u^k)\sum_{i=1}^3 u_i^k dx 
	\le C(L,T,\Omega),
$$
which concludes the proof.
\end{proof}

We now introduce the regularized entropy density
$$
  h^\eta(u^k) = \sum_{i=1}^3\int_1^{u_i^k}\log \bigg(\frac{q_i(s)}{1+\eta q_i(s)} 
	\bigg)\, ds, \quad	\eta>0.
$$
We need to show that $\int_{\Omega}h^\eta dx$ is bounded from below uniformly in $\eta$
(since otherwise, the following estimates would depend on $\eta$). Indeed, we have
$h^\eta(u)=h(u)-\sum_{i=1}^3\int_1^u\log(1+\eta q_i(s))ds$, 
and $\int_{\Omega}h(u)\,dx$ is bounded from below.
Now, for $\eta \in [0,1]$, by Assumption (A4) and Lemma \ref{lem.L2}, 
it holds that (for $u_i \ge 1$ for all $i$)
\begin{align*}
  \sum_{i=1}^3 \int_{\Omega}\int_1^{u_i}\log(1+\eta q_i(s))dsdx 
	&\leq \sum_{i=1}^3 \int_{\Omega}\int_1^{u_i} \log(1 
	+ q_i(s))\, dsdx \\
  &\leq \widetilde{C}_q \int_{\Omega} \sum_{i=1}^3F_i(u)\, \sum_{i=1}^3 u_i\, dx
  \leq C. 
\end{align*}
This means that the integral $\int_{\Omega}h^\eta dx$ is bounded
from below uniformly in $\eta$, showing our claim.
%By Assumption (A3), $q_i(s)\to\infty$ as $s\to\infty$, which implies that
%$\log(q_i(s))/(1+\eta q_i(s))\to 0$ as $s\to\infty$. This shows that $h^\eta$
%is bounded from below uniformly in $\eta$. 
Since $q_i'\ge 0$, the function $h^\eta$ is convex.
The construction of $h^\eta$ allows for the control of the reaction terms since
\begin{align}
  Q^\eta(u^k)\cdot (h^\eta)'(u^k)
	&= -\frac{1}{\eps}\bigg(\frac{q_1(u_1^k)}{1+\eta q_1(u_1^k)}
	- \frac{q_2(u_2^k)q_3(u_3^k)}{(1+\eta q_2(u_2^k))(1+\eta q_3(u_3^k))}\bigg) 
	\nonumber \\
	&\phantom{xx}{}\times\bigg(\log\frac{q_1(u_1^k)}{1+\eta q_1(u_1^k)}
	- \log\frac{q_2(u_2^k)q_3(u_3^k)}{(1+\eta q_2(u_2^k))(1+\eta q_3(u_3^k))}\bigg)
	\le 0. \label{2.Qh}
\end{align}

\begin{lemma}[Entropy estimate]\label{lem.ent}
Let $0<\eta\le\min\{1,\eta_0\}$, where $\eta_0$ is defined in Assumption (A5).
Then (with $\delta>0$ from Assumption (A5))
\begin{equation}\label{2.ent}
\begin{aligned}
  \int_\Omega h^\eta(u^k)dx
	&+ \delta\sum_{k=1}^N\int_\Omega\sum_{i=1}^3
	\frac{q_i'(u_i^k)f'_i(u_i^k)}{q_i(u_i^k)(1+q_i(u_i^k))}|\na u_i^k|^2 dx \\
	&{} -\tau\sum_{k=1}^N\int_\Omega Q^\eta(u^k)\cdot (h^\eta)'(u^k)dx
	\le \int_\Omega h(u^I)dx.
\end{aligned}
\end{equation}
\end{lemma}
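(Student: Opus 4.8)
The plan is to test the scheme \eqref{2.approx} with the entropy variable $(h^\eta)'(u^k)$, whose $i$-th component is $(h^\eta)'_i(u^k)=\log\big(q_i(u_i^k)/(1+\eta q_i(u_i^k))\big)$. This is a legitimate test function: by Lemmas \ref{lem.approx} and \ref{lem.pos}, $u^k$ is continuous, bounded, and bounded away from zero, so $q_i(u_i^k)>0$ and $(h^\eta)'_i(u^k)\in H^1(\Omega)$. Multiplying the $i$-th equation by $(h^\eta)'_i(u^k)$, summing over $i$, and integrating over $\Omega$ produces three contributions: a discrete time derivative, a diffusion term, and the reaction term $\int_\Omega Q^\eta(u^k)\cdot(h^\eta)'(u^k)\,dx$, which by \eqref{2.Qh} is nonpositive and which I would simply keep. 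For the time derivative I would invoke the convexity of $h^\eta$ (valid since $q_i'\ge 0$), giving the discrete chain-rule inequality $\int_\Omega(u^k-u^{k-1})\cdot(h^\eta)'(u^k)\,dx\ge\int_\Omega\big(h^\eta(u^k)-h^\eta(u^{k-1})\big)\,dx$.

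The diffusion term is the heart of the matter. Integrating by parts and using the no-flux condition $\na F_i(u^k)\cdot\nu=0$ to discard the boundary term, it becomes $\sum_i\int_\Omega\rho_i^\eta(u_i^k)\,\na F_i(u^k)\cdot\na u_i^k\,dx$, where $\rho_i^\eta(s):=q_i'(s)/\big(q_i(s)(1+\eta q_i(s))\big)$ is the derivative of $(h^\eta)'_i$. Expanding $\na F_i$ according to \eqref{1.F}, the $i=3$ contribution is the manifestly nonnegative $\rho_3^\eta f_3'|\na u_3^k|^2$, while the $i=1,2$ contributions assemble into the quadratic form in $(\na u_1^k,\na u_2^k)$,
\[
\rho_1^\eta(f_1'+\pa_1 f_{12})|\na u_1^k|^2
+\big(\rho_1^\eta\pa_2 f_{12}+\rho_2^\eta\pa_1 f_{21}\big)\na u_1^k\cdot\na u_2^k
+\rho_2^\eta(f_2'+\pa_2 f_{21})|\na u_2^k|^2,
\]
with all coefficients evaluated at $(u_1^k,u_2^k)$. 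The point is that the off-diagonal coefficient and the product of the diagonal coefficients are exactly the two sides of Assumption (A5).

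The main obstacle is to bound this indefinite form from below by the desired dissipation. Writing $a=\na u_1^k$, $b=\na u_2^k$, $P=\rho_1^\eta(f_1'+\pa_1 f_{12})$, $R=\rho_2^\eta(f_2'+\pa_2 f_{21})$, and $S=\rho_1^\eta\pa_2 f_{12}+\rho_2^\eta\pa_1 f_{21}\ge0$, the form reads $P|a|^2+S\,a\cdot b+R|b|^2$. I would estimate $|S\,a\cdot b|\le S|a||b|$ by Cauchy--Schwarz, insert the bound $S\le\sqrt{2}(1-\delta)\sqrt{PR}$ furnished by (A5) (taking square roots is legitimate since $S\ge0$), and then apply Young's inequality $\sqrt{PR}\,|a||b|\le\tfrac12(P|a|^2+R|b|^2)$. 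This yields $P|a|^2+S\,a\cdot b+R|b|^2\ge\big(1-\tfrac{1-\delta}{\sqrt2}\big)(P|a|^2+R|b|^2)\ge\delta\,(P|a|^2+R|b|^2)$, the last step holding because $(1-\delta)(1-1/\sqrt2)\ge0$ for $\delta\in(0,1)$; this is precisely where the factor $2(1-\delta)^2$ in (A5) is calibrated. Finally, since $\eta\le1$ and $\pa_1 f_{12},\pa_2 f_{21}\ge0$, I bound $P\ge q_1'f_1'/\big(q_1(1+q_1)\big)$ and $R\ge q_2'f_2'/\big(q_2(1+q_2)\big)$, replacing $1+\eta q_i$ by $1+q_i$; the same replacement (with $\delta<1$) handles the $i=3$ term. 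Collecting the three terms produces the dissipation $\delta\int_\Omega\sum_i \frac{q_i'(u_i^k)f_i'(u_i^k)}{q_i(u_i^k)(1+q_i(u_i^k))}|\na u_i^k|^2\,dx$ appearing in \eqref{2.ent}.

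To conclude, I would multiply the per-step inequality by $\tau$, sum over the time levels so that the entropy terms telescope, and estimate the initial contribution $\int_\Omega h^\eta(u^I)\,dx$ against $\int_\Omega h(u^I)\,dx$ via the relation $h^\eta=h-\sum_i\int_1^{\cdot}\log(1+\eta q_i)$ recorded before the lemma. Keeping the nonpositive reaction sum on the left with its sign reversed then gives \eqref{2.ent}. I expect the quadratic-form estimate of the third paragraph to be the only genuinely delicate step, the remaining manipulations being the standard discrete entropy bookkeeping.
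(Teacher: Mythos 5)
Your proposal is correct and follows essentially the same route as the paper: testing \eqref{2.approx} with $(h^\eta)'(u^k)$, handling the time increment by convexity of $h^\eta$, discarding the reaction term via its sign \eqref{2.Qh}, controlling the cross term of the diffusion quadratic form by a Young-type inequality calibrated through Assumption (A5), and finishing with $\eta\le 1$ and $h^\eta(u^I)\le h(u^I)$. The only cosmetic difference is that you go through $S\le\sqrt{2}(1-\delta)\sqrt{PR}$ followed by symmetric Young's inequality (which gives the marginally better constant $1-(1-\delta)/\sqrt{2}\ge\delta$ and avoids the paper's division by $T_4$ in its choice of weight $\alpha=(1-\delta)T_1/T_4$), but this is the same estimate in a different arrangement.
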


\begin{proof}
We know from Lemma \ref{lem.pos} that $u_i^k$ is strictly positive, so
$(\pa h^\eta/\pa u_i)(u^k)$ is an admissible test function in the weak formulation
of \eqref{2.approx}:
\begin{equation}\label{2.ee}
\begin{aligned}
  \int_\Omega (u^k-u^{k-1})\cdot (h^\eta)'(u^k)dx
	&+ \tau\int_\Omega\na u^k:(h^\eta)''(u^k)F'(u^k)\na u^k dx \\
	&= \tau\int_\Omega Q^\eta(u^k)\cdot (h^\eta)'(u^k)dx,
\end{aligned}
\end{equation}
where ``:'' is the Frobenius matrix product. 
Summing \eqref{2.ee} over $k=1,\ldots,j$
and taking into account \eqref{2.Qh} and 
$$
  \int_\Omega\big(h^\eta(u^k)-h^\eta(u^{k-1})\big)dx 
	\le \int_\Omega (u^k-u^{k-1})\cdot (h^\eta)'(u^k)dx,
$$
which follows from the convexity of $h^\eta$, we find that
\begin{equation}\label{2.aux3}
\begin{aligned}
  \int_\Omega h^\eta(u^j)dx &+ \tau\sum_{k=1}^j\int_\Omega
	\na u^k:(h^\eta)''(u^k)F'(u^k)\na u^k dx \\
	&{}- \tau\sum_{k=1}^j\int_\Omega Q^\eta(u^k)\cdot (h^\eta)'(u^k)dx 
	\le \int_\Omega h^\eta(u^I)dx.
\end{aligned}
\end{equation}
A straightforward computation yields
\begin{align*}
  \int_\Omega & \na u^k:(h^\eta)''(u^k)F'(u^k)\na u^k dx \\
	&= \int_\Omega\big(T_1|\na u_1^k|^2 + T_2|\na u_2^k|^2 + T_3|\na u_3^k|^2
	+ T_4\na u_1^k\cdot\na u_2^k\big)dx,
\end{align*}
where 
\begin{align*}
  T_1 &= \frac{q_1'(u_1^k)}{q_1(u_1^k)(1+\eta q_1(u_1^k))}
	\big(f_1'(u_1^k)+\pa_1 f_{12}(u_1^k,u_2^k)\big),\\
	T_2 &= \frac{q_2'(u_2^k)}{q_2(u_2^k)(1+\eta q_2(u_2^k))}
	\big(f_2'(u_2^k)+\pa_2 f_{21}(u_1^k,u_2^k)\big),\\
	T_3 &= \frac{q_3'(u_3^k)}{q_3(u_3^k)(1+\eta q_3(u_3^k))}f'_3(u_3^k) , \\
	T_4 &= \frac{q_1'(u_1^k)}{q_1(u_1^k)(1+\eta q_1(u_1^k))}\pa_2 f_{12}(u_1^k,u_2^k)
	+ \frac{q_2'(u_2^k)}{q_2(u_2^k)(1+\eta q_2(u_2^k))}\pa_1 f_{21}(u_1^k,u_2^k). 
\end{align*}
Set $\alpha=(1-\delta)T_1/T_4$, where $\delta$ comes from Assumption (A5).
Then Young's inequality and $T_4^2\le 2(1-\delta)^2 T_1T_2$ (see Assumption (A5)) 
show that, for $0<\eta\le\eta_0$,
\begin{align*}
  T_4\na u_1^k\cdot\na u_2^k 
	&\ge -T_4\alpha|\na u_1^k|^2 - \frac{T_4}{4\alpha}|\na u_2^k|^2
	= -(1-\delta)T_1|\na u_1^k|^2 - \frac{T_4^2}{4(1-\delta)T_1}|\na u_2^k|^2 \\
	&\ge -(1-\delta)T_1|\na u_1^k|^2 - (1-\delta)T_2|\na u_2^k|^2.
\end{align*}
(Observe that Assumption (A5) could be weakened to $T_4^2\le 4(1-\delta)^2 T_1T_2$,
but then we would need to impose \eqref{1.det} as an additional constraint.)
We deduce that
$$
  \int_\Omega\na u^k:(h^\eta)''(u^k)F'(u^k)\na u^k dx
	\ge \delta\int_\Omega\big(T_1|\na u_1^k|^2+T_2|\na u_2^k|^2\big)dx
	+ \int_\Omega T_3|\na u_3^k|^2.
$$
Hence, inserting this estimate into \eqref{2.aux3}, observing that
$\eta\le 1$ and $h^\eta(u^I)\le h(u^I)$, and including the
reaction terms, the result is shown.
\end{proof}

\begin{lemma}[Estimate for the discrete time derivative]
\label{lem.time}
Let $m>2+d/2$. Then there
exists a constant $C(\eps)>0$ independent of $\eta$ and $\tau$, but depending on
$\eps$, such that
$$
   \tau\sum_{k=1}^N\big\|\tau^{-1}(u_i^k-u_i^{k-1})\big\|_{H^{m}(\Omega)'} 
	\le C(\eps), \quad i=1,2,3.
$$
\end{lemma}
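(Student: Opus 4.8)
The plan is to bound the discrete time derivative by duality with the test functions from the weak formulation. Since $\tau^{-1}(u_i^k - u_i^{k-1}) = \Delta F_i(u^k) + Q_i^\eta(u^k)$ in the weak sense, I would estimate each of the two pieces separately in $H^m(\Omega)'$. For a test function $\phi \in H^m(\Omega)$ with $m > 2 + d/2$, the Sobolev embedding $H^m(\Omega) \hookrightarrow W^{2,\infty}(\Omega)$ gives $\|\phi\|_{W^{2,\infty}(\Omega)} \le C\|\phi\|_{H^m(\Omega)}$, which is exactly why $m$ is chosen in this range. Pairing $\Delta F_i(u^k)$ against $\phi$ and integrating by parts twice (the no-flux boundary condition $\na F_i(u^k)\cdot\nu = 0$ kills the boundary terms) yields
\[
  \bigg|\int_\Omega \Delta F_i(u^k)\,\phi\,dx\bigg|
  = \bigg|\int_\Omega F_i(u^k)\,\Delta\phi\,dx\bigg|
  \le \|\Delta\phi\|_{L^\infty(\Omega)}\int_\Omega F_i(u^k)\,dx
  \le C\|\phi\|_{H^m(\Omega)}\int_\Omega F_i(u^k)\,dx.
\]
Thus the diffusion contribution to $\|\tau^{-1}(u_i^k - u_i^{k-1})\|_{H^m(\Omega)'}$ is controlled by $\int_\Omega F_i(u^k)\,dx$.

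Next I would handle the reaction term. Pairing $Q_i^\eta(u^k)$ against $\phi$ and using the embedding $H^m(\Omega)\hookrightarrow L^\infty(\Omega)$ gives the bound $C\|\phi\|_{H^m(\Omega)}\int_\Omega |Q_i^\eta(u^k)|\,dx$. The crucial point is that I must control $\int_\Omega |Q_i^\eta(u^k)|\,dx$ uniformly in $\eta$ and $\tau$. The naive pointwise bound \eqref{2.Q1} is useless here because $K_1(\eps,\eta) = 2/(\eps\eta^2)$ blows up as $\eta \to 0$. Instead I would invoke Assumption (A4): the first limit condition there states that $q_1(s_1) + q_2(s_2)q_3(s_3)$ is dominated, for large $|s|$, by $\sum_i F_i(s)\sum_i s_i + 1$, and $|Q_i^\eta|$ is bounded by $\eps^{-1}$ times the unregularized reaction $q_1(u_1^k) + q_2(u_2^k)q_3(u_3^k)$ (the regularization only decreases each factor). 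Hence
\[
  \int_\Omega |Q_i^\eta(u^k)|\,dx
  \le \frac{C}{\eps}\int_\Omega \bigg(\sum_{i=1}^3 F_i(u^k)\sum_{i=1}^3 u_i^k + 1\bigg)dx,
\]
where the $\eps^{-1}$ factor is the source of the $\eps$-dependence in the constant $C(\eps)$ of the statement.

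To assemble the global-in-time bound I would sum over $k=1,\ldots,N$ with the weight $\tau$, so that
\[
  \tau\sum_{k=1}^N \big\|\tau^{-1}(u_i^k - u_i^{k-1})\big\|_{H^m(\Omega)'}
  \le C\,\tau\sum_{k=1}^N \bigg(\int_\Omega F_i(u^k)\,dx
  + \frac{1}{\eps}\int_\Omega \Big(\sum_{j} F_j(u^k)\sum_{j} u_j^k + 1\Big)dx\bigg).
\]
The term $\tau\sum_k \int_\Omega \sum_j F_j(u^k)\sum_j u_j^k\,dx$ is bounded uniformly in $\eps$, $\eta$, $\tau$ by the first estimate of Lemma \ref{lem.L2} (the uniform $L^2$ estimate from the duality method). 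The remaining term $\tau\sum_k \int_\Omega F_i(u^k)\,dx$ is controlled by the same quantity together with $\tau\sum_k\|u_i^k\|_{L^2}^2$ from Lemma \ref{lem.L2}, using $F_i(u^k)\le$ (something integrable) via Cauchy--Schwarz or directly the $L^2$ bound on $F_i(u^k)$ implied by the structure conditions. Finally $\tau\sum_k |\Omega| = T|\Omega|$ is trivially bounded. Collecting these yields the claimed bound with a constant $C(\eps)$ that depends on $\eps$ through the $\eps^{-1}$ prefactor but is independent of $\eta$ and $\tau$.

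The main obstacle I anticipate is the $L^1$ control of the reaction terms uniformly in the regularization parameter $\eta$. Because the pointwise bound on $Q_i^\eta$ degenerates as $\eta\to 0$, the estimate must go through the diffusion-controlled quantity $\sum_i F_i\sum_i u_i$ rather than through any direct bound on $Q_i^\eta$ itself; this is precisely the role of the first growth condition in Assumption (A4), which couples the reaction growth to the product of diffusion and concentration. Verifying that the regularized reaction $Q_i^\eta$ inherits this domination (the regularized factors $q_i/(1+\eta q_i)$ being pointwise smaller than $q_i$) and that the resulting bound is genuinely $\eta$-independent is the delicate step; everything else is a routine application of Sobolev embedding, integration by parts, and the already-established uniform $L^2$ estimates of Lemma \ref{lem.L2}.
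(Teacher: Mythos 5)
Your overall route is exactly the paper's: test the scheme against $\phi\in H^m(\Omega)$ with $m>2+d/2$, use the embedding $H^m(\Omega)\hookrightarrow W^{2,\infty}(\Omega)$, bound the diffusion contribution by $\int_\Omega F_i(u^k)\,dx$ after integrating by parts, and bound the reaction contribution by observing that $|Q_i^\eta(u^k)|\le \eps^{-1}\big(q_1(u_1^k)+q_2(u_2^k)q_3(u_3^k)\big)$ (the regularization only decreases each factor) and then invoking the first condition of Assumption (A4) together with the duality estimate of Lemma \ref{lem.L2}; this is also exactly where the paper's constant picks up its dependence on $\eps$.

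The one step you justify incorrectly is the bound $\tau\sum_{k=1}^N\int_\Omega F_i(u^k)\,dx\le C$. There is no ``$L^2$ bound on $F_i(u^k)$ implied by the structure conditions'': Assumptions (A1)--(A2) impose no upper growth restriction on $f_i$ and $f_{ij}$ (e.g.\ $f_i(s)=\alpha_i s+s^\delta$ with $\delta$ large is admissible), so the uniform $L^2$ bound on $u_i^k$ from Lemma \ref{lem.L2} gives no $L^p$ control of $F_i(u^k)$ at all, and Cauchy--Schwarz against $\|u_i^k\|_{L^2(\Omega)}$ goes nowhere. The correct (and the paper's) argument is a truncation: split $\Omega$ into the set where the solution is small, on which $F_i(u^k)$ is bounded by continuity of $F$, and its complement, on which one multiplies by the (there large) quantity $u_i^k$ (or $\sum_j u_j^k$), so that $F_i(u^k)\le \sum_i F_i(u^k)\sum_j u_j^k$ pointwise; the time sum of the latter is precisely the quantity bounded uniformly in $\eps$, $\eta$, $\tau$ by the first estimate of Lemma \ref{lem.L2}. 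With this replacement your argument is complete and coincides with the paper's proof.
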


\begin{proof}
Using the duality estimate from Lemma \ref{lem.L2}, we obtain 
\begin{align*}
  \tau\sum_{k=1}^N\int_\Omega  F_i(u^k)dx 
	&\le \tau\sum_{k=1}^N\int_{\{u^k\le 1\}}F_i(u^k)dx 
	+ \tau\sum_{k=1}^N\int_{\{u^k>1\}}F_i(u^k)dx \\
	&\le \tau\sum_{k=1}^N\int_{\{u^k\le 1\}}F_i(u^k)dx 
	+ \tau\sum_{k=1}^N\int_{\{u^k>1\}}F_i(u^k)u_i^kdx \le C.
\end{align*}
Furthermore, by Assumption (A4) and Lemma \ref{lem.L2} again,
\begin{align*}
  \tau\sum_{k=1}^N\int_\Omega|Q_i^\eta(u^k)| dx
	&\le \frac{\tau}{\eps}
	\sum_{k=1}^N\int_\Omega\big(q_1(u_1^k)+q_2(u_2^k)q_3(u_3^k)\big)dx \\
	&\le \frac{C\tau}{\eps}\sum_{k=1}^N\int_\Omega
	\bigg(\sum_{j=1}^3 F_j(u^k)\sum_{j=1}^3 u_j^k + 1\bigg) 
	\le C(\eps).
\end{align*}
Since $m>2+d/2$, we know that $\phi_i\in H^m(\Omega)\hookrightarrow W^{2,\infty}(\Omega)$.
Therefore, we can write
$$
  \tau\sum_{k=1}^N\tau^{-1}(u_i^k-u_i^{k-1})\phi_i dx
	= \tau\sum_{k=1}^N\int_\Omega F_i(u^k)\Delta\phi_i dx
	+ \tau\sum_{k=1}^N\int_\Omega Q_i^\eta(u^k)\phi_i dx.
$$
In view of the two previous estimates, we infer that
$$
  \tau\sum_{k=1}^N\big\|\tau^{-1}(u_i^k-u_i^{k-1})\big\|_{H^{m}(\Omega)'}
	= \sup_{\|\phi_i\|_{H^m(\Omega)}=1}
	\sum_{k=1}^N\bigg|\int_\Omega(u_i^k-u_i^{k-1})\phi_i dx\bigg| \le C(\eps),
$$
showing the desired bound.
\end{proof}

\begin{lemma}[Uniform $W^{1,1}$ estimate]\label{lem.W11}
There exists a constant $C>0$ independent of $\eps$, $\eta$, and $\tau$ such that
$$
  \tau\sum_{k=1}^N\|u_i^k\|_{W^{1,1}(\Omega)} \le C.
$$
\end{lemma}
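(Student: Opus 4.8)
The plan is to split the $W^{1,1}$ norm as $\|u_i^k\|_{W^{1,1}(\Omega)} = \|u_i^k\|_{L^1(\Omega)} + \|\na u_i^k\|_{L^1(\Omega)}$ and to bound each contribution by the uniform estimates already established, namely the $L^2$ estimate of Lemma \ref{lem.L2} and the nonlinear gradient control contained in the entropy inequality of Lemma \ref{lem.ent}. The zeroth-order term is immediate: by the Cauchy--Schwarz inequality in space, $\|u_i^k\|_{L^1(\Omega)} \le |\Omega|^{1/2}\|u_i^k\|_{L^2(\Omega)}$, and a discrete Cauchy--Schwarz in time then gives $\tau\sum_{k=1}^N \|u_i^k\|_{L^1(\Omega)} \le |\Omega|^{1/2}(N\tau)^{1/2}(\tau\sum_{k=1}^N\|u_i^k\|_{L^2(\Omega)}^2)^{1/2} \le C$, using $N\tau = T$ and Lemma \ref{lem.L2}. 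The resulting constant is independent of $\eps$, $\eta$, $\tau$.

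The gradient term is the crux, since the entropy inequality controls $\na u_i^k$ only in the degenerate, nonlinear form $\tau\sum_{k=1}^N\int_\Omega |\na J_i(u_i^k)|^2 dx \le C$, with $J_i$ from \eqref{1.J}. Writing $J_i'(s) = \min\{1, (\frac{q_i'(s)f_i'(s)}{q_i(s)(1+q_i(s))})^{1/2}\}$, the chain rule gives the pointwise identity $\na u_i^k = (J_i'(u_i^k))^{-1}\na J_i(u_i^k)$ wherever $u_i^k>0$, which holds throughout $\Omega$ by the strict positivity of Lemma \ref{lem.pos}. Hence, by the Cauchy--Schwarz inequality,
$$
\int_\Omega |\na u_i^k|\,dx \le \bigg(\int_\Omega (J_i'(u_i^k))^{-2}\,dx\bigg)^{1/2}\bigg(\int_\Omega |\na J_i(u_i^k)|^2\,dx\bigg)^{1/2},
$$
so it remains to control the weight $\int_\Omega (J_i'(u_i^k))^{-2}dx$ in the summed sense.

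Since $(J_i'(s))^{-2} = \max\{1, \frac{q_i(s)(1+q_i(s))}{q_i'(s)f_i'(s)}\}$, the second inequality of Assumption (A4) yields the pointwise bound $(J_i'(u_i^k))^{-2} \le 1 + C_q(F_i(u^k)u_i^k + 1)$, whence $\int_\Omega (J_i'(u_i^k))^{-2}dx \le C(\int_\Omega F_i(u^k)u_i^k\,dx + |\Omega|)$. A discrete Cauchy--Schwarz in time then gives
$$
\tau\sum_{k=1}^N \|\na u_i^k\|_{L^1(\Omega)} \le \bigg(\tau\sum_{k=1}^N\int_\Omega (J_i'(u_i^k))^{-2}dx\bigg)^{1/2}\bigg(\tau\sum_{k=1}^N\int_\Omega |\na J_i(u_i^k)|^2 dx\bigg)^{1/2}.
$$
The second factor is bounded by Lemma \ref{lem.ent}, while the first is bounded by $C(\tau\sum_{k=1}^N\int_\Omega F_i(u^k)u_i^k\,dx + T|\Omega|)^{1/2} \le C$, using $F_i(u^k)u_i^k \le \sum_j F_j(u^k)\sum_j u_j^k$ together with the duality estimate of Lemma \ref{lem.L2}. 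All constants are independent of $\eps$, $\eta$, $\tau$, completing the proof. The main obstacle is precisely this weight estimate: the entropy delivers gradient control only through the nonlinear function $J_i$, so one cannot directly bound $\|\na u_i^k\|_{L^2}$; it is the interplay of the weight bound of Assumption (A4) with the $L^2$/duality estimate that allows one to ``undo'' the nonlinearity of $J_i$ and recover $L^1$ control of the genuine gradient.
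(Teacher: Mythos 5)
Your proof is correct and follows essentially the same route as the paper: a space--time Cauchy--Schwarz splitting of $\|\na u_i^k\|_{L^1}$ into the entropy-controlled weighted gradient (Lemma \ref{lem.ent}) and a weight whose integral is bounded via Assumption (A4) together with the duality estimate of Lemma \ref{lem.L2}, plus the $L^2$ bound for the zeroth-order term. The only difference is notational: you phrase the weight as $(J_i'(u_i^k))^{-2}=\max\{1,\,q_i(1+q_i)/(q_i'f_i')\}$, whereas the paper uses the ratio $q_i(1+q_i)/(q_i'f_i')$ directly, which is equivalent up to the harmless additive constant you already account for.
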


\begin{proof}
The Cauchy-Schwarz inequality gives
\begin{align*}
  \tau\sum_{k=1}^N\|\na u_i^k\|_{L^1(\Omega)}
	&\le \bigg(\tau\sum_{k=1}^N\int_\Omega
	\frac{q_i'(u_i^k)f_i'(u_i^k)}{q_i(u_i^k)(1+q_i(u_i^k))}|\na u_i^k|^2 dx
	\bigg)^{1/2} \\
	&\phantom{xx}{}\times\bigg(\tau\sum_{k=1}^N\int_\Omega
	\frac{q_i(u_i^k)(1+q_i(u_i^k))}{q_i'(u_i^k)f_i'(u_i^k)}dx\bigg)^{1/2}.
\end{align*}
In view of Lemma \ref{lem.ent}, the first factor is bounded uniformly in
$\eps$, $\eta$, and $\tau$. By Assumption (A4) and the duality estimate
in Lemma \ref{lem.L2}, it follows that
$$
  \tau\sum_{k=1}^N\|\na u_i^k\|_{L^1(\Omega)}
	\le C\bigg(\tau\sum_{k=1}^N\int_\Omega\bigg(\sum_{i=1}^3 F_i(u^k)
	\sum_{i=1}^3 u_i^k + 1\bigg)dx\bigg)^{1/2} \le C.
$$
Taking into account the uniform $L^2$ bound, we can conclude.
\end{proof}

%%%%%%%%%%%%%%%%%%%%%%%%

\subsection{Limit $(\eta,\tau)\to 0$.}\label{sec.lim}

Let $u_i^{(\tau)}(x,t)=u_i^k(x)$ for $x\in\Omega$, $t\in((k-1)\tau,k\tau]$, $i=1,2,3$,
be piecewise constant functions in time, and set $u^{(\tau)}=(u_1^{(\tau)},u_2^{(\tau)},
u_3^{(\tau)})$. We introduce the time shift operator 
$(\sigma_\tau u^{(\tau)})(x,t)=u^{k+1}(x)$ for $x\in\Omega$,
$t\in((k-1)\tau,k\tau]$. 
Let $\phi_i:(0,T)\to H^m(\Omega)$ be a piecewise constant function such that
$m>2+d/2$, $\phi_i(t)=0$ for $((N-1)\tau,N\tau]$, and $\na\phi_i\cdot\nu=0$ on 
$\pa\Omega$, $t>0$.   
Then the weak formulation of \eqref{2.approx} reads as
\begin{equation}\label{2.weak}
\begin{aligned}
  -\int_0^T\int_\Omega & u_i^{(\tau)}(\sigma_\tau\phi_i-\phi_i)dxdt
  - \int_0^T\int_\Omega F_i(u^{(\tau)})\Delta\phi_i dxdt \\
	&= \int_0^T\int_\Omega Q^\eta_i(u^{(\tau)})\phi_i dxdt
	+ \int_\Omega u_i^I(x)\phi_i(x,0)dx.
\end{aligned}
\end{equation}
Lemmas \ref{lem.time} and \ref{lem.W11} give the (uniform with respect to $\tau$, $\eta$) bounds
$$
  \tau^{-1}\|\sigma_\tau u_i^{(\tau)}-u_i^{(\tau)}\|_{L^1(0,T-\tau;H^{m}(\Omega)')}
	+ \|u_i^{(\tau)}\|_{L^1(0,T;W^{1,1}(\Omega))} \le C(\eps).
$$
Observing that the embedding $W^{1,1}(\Omega)\hookrightarrow L^1(\Omega)$
is compact, we can apply the Aubin-Lions lemma in the version of 
\cite{DrJu12} to infer the existence of a subsequence, which is not relabeled, 
such that, as $(\eta,\tau)\to 0$,
$$
  u_i^{(\tau)}\to u_i\quad\mbox{strongly in }L^1(Q_T),
$$
recalling that $Q_T=\Omega\times(0,T)$. According to Lemma \ref{lem.approx}, the limit
has to be performed in such a way that $\tau\le \eps\eta^2/2$ is verified.
%In view of the uniform $L^2$ estimate in
%Lemma \ref{lem.L2}, the strong convergence even holds in $L^p(Q_T)$ for $p<2$.
Possibly for a subsequence, the convergence also holds a.e.\ in $Q_T$.
The positivity estimate from Lemma \ref{lem.pos} implies that
$u_i\ge 0$ in $Q_T$. Moreover,
$$
  F_i(u^{(\tau)})\to F_i(u), \quad Q^\eta(u^{(\tau)})\to Q(u)
	\quad\mbox{a.e. in }Q_T.
$$
Assumption (A4) implies that $(Q_i(u^{(\tau)}))$ is equi-integrable. More precisely,
we have for all $\delta>0$ the existence of $R_0>0$ such that for all $R\ge R_0$
and $|u|\ge R$, 
$$
  \frac{Q_i(u)}{\sum_{i=1}^3 F_i(u)\sum_{i=1}^3 u_i
	+1} \le \delta.
$$
Then we conclude from Lemma \ref{lem.L2} that
\begin{align*}
  \int_0^T\int_{\{u^{(\tau)} \ge R\}}Q^\eta_i(u^{(\tau)})dxdt
	&\le \int_0^T\int_{\{u^{(\tau)} \ge R\}} \big(q_1(u_1^{(\tau)})
	+ q_2(u_1^{(\tau)})q_3(u_1^{(\tau)})\big)dxdt \\
	&\le \delta\int_0^T\int_\Omega\bigg(\sum_{i=1}^3 F_i(u^{(\tau)})
	\sum_{i=1}^3 u_i^{(\tau)} + 1\bigg)dxdt \le C\delta,
\end{align*}
where $C>0$ is independent of $\eps$ (and $\tau$, $\eta$). 
This shows the equi-integrability of $(Q_i(u^{(\tau)}))$.
The same conclusion holds for $(F_i(u^{(\tau)}))$ since
\begin{equation}\label{2.FR}
  \int_0^T\int_{\{u^{(\tau)}\ge R\}}F_i(u^{(\tau)})dxdt
	\le \frac{1}{R}\int_0^T\int_\Omega F_i(u^{(\tau)})u_i^{(\tau)}dxdt
	\le \frac{C}{R}.
\end{equation}
We deduce from Vitali's convergence theorem that
$$
  F_i(u^{(\tau)})\to F_i(u), \quad 
	Q_i^\eta(u^{(\tau)})\to Q_i(u)
	\quad\mbox{strongly in }L^1(Q_T),\ i=1,2,3.
$$
Therefore, we can perform the limit $(\eta,\tau)\to 0$ in \eqref{2.weak},
showing that $u=(u_1,u_2,u_3)$ solves \eqref{1.weak}
for all $\phi_i\in H^1(0,T;L^2(\Omega))\cap
L^\infty(0,T;H^m(\Omega))$, $i=1,2,3$. By a density argument,
we see that the weak formulation also holds for all 
$\phi_i\in L^\infty(0,T;W^{2,\infty}(\Omega))$ 
with $\na\phi_i\cdot\nu=0$ on $\pa\Omega$.

It remains to verify the entropy inequality \eqref{1.ent}. 
We have from \eqref{2.ent}:
\begin{equation}\label{2.ent2}
  \int_\Omega h^\eta(u^{(\tau)})dx
	+ \delta\sum_{i=1}^3\int_0^T\int_\Omega|\na [J_i(u_i^{(\tau)})] |^2 dxdt
	\le \int_\Omega h(u^I)dx,
\end{equation}
where $J_i$ is defined in \eqref{1.J}.
The a.e.\ convergence of $u_i^{(\tau)}\to u_i$ implies that
$J_i(u_i^{(\tau)})\to J_i(u_i)$ a.e.\ in $Q_T$. Moreover, $J_i(s)\le s$, and 
thanks to
the uniform $L^2$ bound for $u_i^{(\tau)}$, we deduce that $(J_i(u_i^{(\tau)}))$
is bounded in $L^2(0,T;H^1(\Omega))$. Up to a subsequence, we have
$\na J_i(u_i^{(\tau)})\rightharpoonup \na J_i(u_i)$ weakly in $L^2(Q_T)$.
As $h^\eta$ is convex and continuous, it is weakly lower semicontinuous
\cite[Corollary 3.9]{Bre11} and
% as $\tau\to 0$,
$$
  \int_\Omega h^\eta(u)dx \le \liminf_{\tau\to 0}\int_\Omega h(u^{(\tau)})dx.
$$
Since $(h^\eta)$ converges to $h$ monotonically, we infer from the
monotone convergence theorem that 
$$
  \int_\Omega h(u)dx \le \liminf_{(\eta,\tau)\to 0}\int_\Omega h(u^{(\tau)})dx.
$$
Therefore, observing that the square of the $L^2$ norm is also
weakly lower semicontinuous, we may pass to the limit $(\eta,\tau)\to 0$
in \eqref{2.ent2} to conclude \eqref{1.ent}.

%%%%%%%%%%%%%%%%%%%%%%%%%%%%%%%%%%%%%%%%%%%%%%%%%%%%%%%%%%%%%%%%%%%%%%%%%%%%%%%

\section{Proof of the fast-reaction limit}\label{sec.fast}

In the previous section, we have shown some {\it{a priori}} estimates for the
approximate solution $u_i^{(\tau)}$, which are also independent of $\eps$.
Indeed, by Lemmas \ref{lem.L2} and \ref{lem.ent}, 
%and estimate \eqref{2.Fi},
\begin{align}
  & \int_0^T\int_\Omega(u_i^{(\tau)})^2 dxdt  
	+ \int_0^T\int_\Omega F_i(u^{(\tau)})u_i^{(\tau)} dxdt \le C, \label{3.est1} \\
	& \int_0^T\int_\Omega\frac{q'_i(u_i^{(\tau)})f'_i(u_i^{(\tau)})}{q_i(u_i^{(\tau)})
	(1+q_i(u_i^{(\tau)}))}|\na u_i^{(\tau)}|^2 dxdt \le C, \quad i=1,2,3,
	\label{3.est2} \\
	& \int_0^T\int_\Omega \bigg(\frac{q_1(u_1^{(\tau)})}{1+\eta q_1(u_1^{(\tau)})}
	- \frac{q_2(u_2^{(\tau)})q_3(u_3^{(\tau)})}{(1+\eta q_2(u_2^{(\tau)}))
	(1+\eta q_3(u_3^{(\tau)}))}\bigg) \nonumber \\
	&\phantom{xx}{}\times\bigg(\log\frac{q_1(u_1^{(\tau)})}{1+\eta q_1(u_1^{(\tau)})}
	- \log\frac{q_2(u_2^{(\tau)})q_3(u_3^{(\tau)})}{(1+\eta q_2(u_2^{(\tau)}))
	(1+\eta q_3(u_3^{(\tau)}))}\bigg)dxdt
	\le \eps C. \label{3.est3}
\end{align}
As mentioned in Section \ref{sec.lim}, estimates 
\eqref{3.est1} and \eqref{3.est2} yield the bound
\begin{equation}\label{3.eps1}
  \|J_i(u_i^{(\tau)})\|_{L^2(0,T;H^1(\Omega))} \le C,
\end{equation}
which is uniform in $\eps$, $\eta$, and $\tau$. We need more
 uniform bounds to be able to pass to the limit $\eps \to 0$.
 \medskip 
 
 We start here the
 \medskip
 
 {\it{Proof of Theorem \ref{thm.fast} }}: We systematically denote by 
 $u^\eps=(u_1^\eps,u_2^\eps,u_3^\eps)$ a very weak solution to 
\eqref{1.eq}-\eqref{1.bic} constructed in Theorem \ref{thm.ex}. We first state the
\medskip

\begin{lemma}[$\eps$-uniform estimates]\label{lem.eps}
%Let $u^\eps=(u_1^\eps,u_2^\eps,u_3^\eps)$ be the very weak solution to 
%\eqref{1.eq}-\eqref{1.bic} constructed in Theorem \ref{thm.ex}. 
There exists $C>0$ independent of $\eps$ such that, for $i=1,2,3$,
\begin{align}
  \|u_i^\eps\|_{L^2(Q_T)} + \|F_i(u^\eps)u_i^\eps\|_{L^1(Q_T)} 
	&\le C, \label{3.eps.u} \\
  \|J_i(u_i^\eps)\|_{L^2(0,T;H^1(\Omega))} + \|u_i^\eps\|_{L^1(0,T;W^{1,1}(\Omega))} 
	&\le C, \label{3.eps.k} \\
	\big\|q_i(u_1^\eps) - q_2(u_2^\eps)q_3(u_3^\eps)\big\|_{L^1(Q_T)} 
	&\le C\sqrt{\eps}. \label{3.eps.q}
\end{align}
\end{lemma}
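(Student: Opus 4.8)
The plan is to establish the three estimates \eqref{3.eps.u}--\eqref{3.eps.q} by passing to the limit $(\eta,\tau)\to 0$ in the corresponding uniform bounds already available for the approximate solution $u_i^{(\tau)}$, keeping careful track of which constants are $\eps$-independent. The key observation is that estimates \eqref{3.est1}--\eqref{3.est3} are precisely the $\eps$-uniform versions of the $a\,priori$ bounds from Lemmas~\ref{lem.L2} and~\ref{lem.ent}, and since $u^\eps$ is constructed as the limit of $u_i^{(\tau)}$ as $(\eta,\tau)\to 0$, each of these bounds descends to $u^\eps$ by lower semicontinuity and the convergence properties recorded in Section~\ref{sec.lim}.

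First I would treat \eqref{3.eps.u}: the $L^2(Q_T)$ bound and the bound on $\|F_i(u^\eps)u_i^\eps\|_{L^1(Q_T)}$ follow directly from \eqref{3.est1} by passing to the limit. Since $u_i^{(\tau)}\to u_i^\eps$ strongly (hence a.e., up to subsequence) in $L^1(Q_T)$, Fatou's lemma applied to $(u_i^{(\tau)})^2$ and to $F_i(u^{(\tau)})u_i^{(\tau)}$ yields the two bounds with the same $\eps$-independent constant $C$. Next, for \eqref{3.eps.k}, the gradient estimate \eqref{3.eps1} is already stated to be uniform in $\eps$, $\eta$, and $\tau$; together with weak lower semicontinuity of the $L^2(0,T;H^1(\Omega))$ norm and the a.e.\ convergence $J_i(u_i^{(\tau)})\to J_i(u_i^\eps)$, this gives the bound on $\|J_i(u_i^\eps)\|_{L^2(0,T;H^1)}$. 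The $W^{1,1}$ part of \eqref{3.eps.k} is obtained exactly as in Lemma~\ref{lem.W11}, applying Cauchy--Schwarz to split $\na u_i^\eps$ into the bounded gradient factor and the factor controlled by Assumption~(A4) and the duality estimate \eqref{3.eps.u}; all constants there were already shown to be $\eps$-independent.

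The interesting estimate is \eqref{3.eps.q}, and here the main work lies in exploiting \eqref{3.est3}. The product appearing there has the form $(a-b)(\log a - \log b)$ with $a=q_1/(1+\eta q_1)$ and $b = q_2q_3/((1+\eta q_2)(1+\eta q_3))$, which is nonnegative and, by the elementary inequality $(a-b)(\log a-\log b)\ge (\sqrt a - \sqrt b)^2$ (or more directly by bounding $|a-b|^2$ below by a multiple of this entropy-dissipation product on bounded sets), controls $|a-b|$ after a Cauchy--Schwarz step against a weight that is itself controlled by the $L^2$ and duality bounds. Concretely, I would write $\|a-b\|_{L^1}\le \big(\int (a-b)(\log a-\log b)\big)^{1/2}\big(\int (a-b)/(\log a - \log b)\big)^{1/2}$, bound the first factor by $\sqrt{\eps C}$ from \eqref{3.est3}, and show the second factor is $\eps$-uniformly bounded using Assumption~(A4) and the integrability of $q_1$ and $q_2q_3$. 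Then I would pass to the limit $(\eta,\tau)\to 0$, using the a.e.\ convergence of $u_i^{(\tau)}$ and the equi-integrability of $q_1(u_1^{(\tau)})$ and $q_2(u_2^{(\tau)})q_3(u_3^{(\tau)})$ (established in Section~\ref{sec.lim} via Vitali), so that the regularized reaction differences converge in $L^1(Q_T)$ to $q_1(u_1^\eps)-q_2(u_2^\eps)q_3(u_3^\eps)$, giving the bound $C\sqrt\eps$.

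The hard part will be the $\eps$-uniform control of the second Cauchy--Schwarz factor $\int (a-b)/(\log a-\log b)\,dx\,dt$ in the derivation of \eqref{3.eps.q}, since the quotient $(a-b)/(\log a - \log b)$ is a logarithmic mean of $a$ and $b$ and must be dominated by the reaction magnitudes $q_1 + q_2 q_3$, which in turn are integrated against the duality weight via the first limit in Assumption~(A4). Keeping the $\eta$-regularization under control while taking this quotient (the denominators $1+\eta q_i$ must not degenerate the estimate as $\eta\to 0$) is the delicate bookkeeping step; once that uniform bound is in place, the factor of $\sqrt\eps$ comes out cleanly from \eqref{3.est3}, and the remaining passages to the limit are routine applications of Fatou, weak lower semicontinuity, and Vitali's theorem.
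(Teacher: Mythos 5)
Your proposal is correct and follows essentially the same route as the paper: limits \eqref{3.est1}--\eqref{3.est3} passed down to $u^\eps$ by Fatou/weak lower semicontinuity, the $W^{1,1}$ bound as in Lemma~\ref{lem.W11}, and \eqref{3.eps.q} from the entropy-dissipation bound via a Cauchy--Schwarz splitting together with the (A4)-plus-duality control of $\int(q_1+q_2q_3)$. The only (harmless) difference is the elementary inequality used for \eqref{3.eps.q}: the paper factors $|a-b|=|\sqrt a-\sqrt b|\,(\sqrt a+\sqrt b)$ and uses $4(\sqrt a-\sqrt b)^2\le (a-b)\log(a/b)$, while you split off the logarithmic mean $(a-b)/(\log a-\log b)$ and bound it by the arithmetic mean $(a+b)/2$ --- the step you flag as delicate is in fact this standard inequality, so it goes through directly.
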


\begin{proof}
The first estimate in \eqref{3.eps.u} follows immediately from \eqref{3.est1} 
 by performing the limit $(\eta,\tau)$ and using the weakly lower semicontinuity of 
the $L^2$ norm. The results in Subsection \ref{sec.lim} imply that
$F_i(u^{(\tau)})u_i^{(\tau)}\to F_i(u^\eps)u_i^\eps$ a.e.\ in $Q_T$. 
Then Fatou's lemma and the second estimate in \eqref{3.est1} yield
\begin{align*}
  \int_0^T\int_\Omega F_i(u^\eps)u_i^\eps dxdt
	&= \int_0^T\int_\Omega \liminf_{\tau\to 0}F_i(u^{(\tau)})u_i^{(\tau)} dxdt \\
	&\le \liminf_{\tau\to 0}\int_0^T\int_\Omega F_i(u^{(\tau)})u_i^{(\tau)} dxdt
	\le C,
\end{align*}
and this implies the second estimate in \eqref{3.eps.u}.

We have shown in the proof of Theorem \ref{thm.ex} that there exists a
subsequence of $(u_i^{(\tau)})$ (not relabeled) such that
$u_i^{(\tau)}\to u_i^\eps$ in $L^1(Q_T)$ and a.e. Since $J_i$ is continuous,
we have $J_i(u_i^{(\tau)})\to J_i(u_i^\eps)$ a.e.
Estimate \eqref{3.eps1} then implies that, up to a subsequence,
$J_i(u_i^{(\tau)})\rightharpoonup J_i(u_i^\eps)$ weakly in $L^2(0,T;H^1(\Omega))$.
Because of the weakly lower semi-continuity of the norm, we infer from \eqref{3.eps1}
that the first estimate in \eqref{3.eps.k} holds.
The $W^{1,1}$ bound for $(u_i^\eps)$ follows as in the proof of Lemma \ref{lem.W11},
using the $L^2$ bound of $\na J_i(u_i^\eps)$.

It follows from the a.e.\ convergence $q_i(u_i^{(\tau)})\to q_i(u_i^\eps)$, 
Fatou's
lemma, and estimate \eqref{3.est3} that
$$
  \int_0^T\int_\Omega \big(q_1(u_1^\eps)-q_2(u_2^\eps)q_3(u_3^\eps)\big)
	\log \bigg( \frac{q_1(u_1^\eps)}{q_2(u_2^\eps)q_3(u_3^\eps)} \bigg)\,dxdt \le \eps C.
$$
Thus, the elementary inequality $4(a^{1/2}-b^{1/2})^2\le(a-b)\log(a/b)$ gives
\begin{align*}
  \big\|q_1 & (u_1^\eps)^{1/2} - \big(q_2(u_2^\eps)q_3(u_3^\eps)\big)^{1/2}
	\big\|_{L^2(Q_T)}^2 \\
  &\le \frac14\int_0^T\int_\Omega\big(q_1(u^\eps_1)-q_2(u^\eps_2)q_3(u^\eps_3)\big)
	\log \bigg( \frac{q_1(u^\eps_1)}{q_2(u^\eps_2)q_3(u^\eps_3)} \bigg)\, dxdt \le \eps C.
\end{align*}
We now use Assumption (A4) and \eqref{3.est1} to infer that
\begin{align*}
  \big\|q_1 & (u_1^\eps) - q_2(u_2^\eps)q_3(u_3^\eps)\big\|_{L^1(Q_T)} \\
	&\le \big\|q_1(u_1^\eps)^{1/2} - \big(q_2(u_2^\eps)q_3(u_3^\eps)\big)^{1/2}
	\big\|_{L^2(Q_T)} 
	\big\|q_1(u_1^\eps)^{1/2} + \big(q_2(u_2^\eps)q_3(u_3^\eps)\big)^{1/2}
	\big\|_{L^2(Q_T)} \\
	&\le C\sqrt{\eps}\|q_1(u_1^\eps) + q_2(u_2^\eps)q_3(u_3^\eps)\big\|_{L^1(Q_T)}^{1/2}\\
	&\le C\sqrt{\eps}\bigg(\int_0^T\int_\Omega\sum_{i=1}^3 
	F_i(u^\eps)\sum_{i=1}^3 u_i^\eps dxdt\bigg)^{1/2} \le C\sqrt{\eps},	
\end{align*}
which concludes the proof.
\end{proof}

Unfortunately, the estimate on the discrete time derivative in Lemma \ref{lem.time}
is not independent of $\eps$, which prevents the direct use of the Aubin-Lions lemma.
We overcome this problem by applying this lemma to 
$u_1^\eps+u_2^\eps$ and $u_1^\eps+u_3^\eps$ and by exploiting estimate 
\eqref{3.eps.q}. Indeed, these sums solve
$$
  \pa_t(u_1^\eps+u_i^\eps) = \Delta\big(F_1(u^\eps)+F_i(u^\eps)\big), \quad i=2,3.
$$
Estimate \eqref{3.eps.u} shows that
$(F_i(u^\eps))$ is bounded in $L^1(Q_T)$.
Consequently, $\Delta F_i(u^\eps)$ is bounded in $L^1(0,T;W^{2,\infty}(\Omega)')$, 
i.e.
$$
  \|\pa_t(u_1^\eps+u_i^\eps)\|_{L^1(0,T;W^{2,\infty}(\Omega)')} \le C, \quad i=2,3.
$$
Using this estimate together with the $W^{1,1}$ bound \eqref{3.eps.k} for $u_1^\eps+u_i^\eps$, 
we can apply the
Aubin-Lions lemma of \cite{Sim87} to find a subsequence, which is not relabeled,
such that, as $\eps\to 0$,
\begin{equation}\label{3.conv1}
  u_1^\eps+u_2^\eps \to v_2, \quad u_1^\eps+u_3^\eps \to v_3 
	\quad\mbox{strongly in }L^1(Q_T).
\end{equation}
Moreover, by \eqref{3.eps.q},
\begin{equation}\label{3.conv2}
  q_1(u_1^\eps) - q_2(u_2^\eps)q_3(u_3^\eps) \to 0 \quad
	\mbox{strongly in }L^1(Q_T).
\end{equation}
We claim that these convergences are sufficient to infer the strong convergence
of $(u_i^\eps)$ in $L^1(Q_T)$ for $i=1,2,3$. To show this, we need 
the following
auxiliary result:

\begin{lemma}[Inversion of $q(u_1)=q_2(u_2)q_3(u_3)$]\label{lem.inv}
The function $g:\R_+^2\to\R_+^2$, 
$$
  g(u_2,u_3) = \begin{pmatrix}
	u_2 + q_1^{-1}(q_2(u_2)q_3(u_3)) \\
	u_3 + q_1^{-1}(q_2(u_2)q_3(u_3))
	\end{pmatrix}
$$
is a homeomorphism on $\R_+^2$.
\end{lemma}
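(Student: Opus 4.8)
The plan is to show that $g$ is a continuous bijection of $\R_+^2$ onto itself whose inverse is also continuous, treating each property in turn. Throughout I would write $w(u_2,u_3) = q_1^{-1}(q_2(u_2)q_3(u_3))$, which is exactly the recovered value of $u_1$. Under the hypotheses of Theorem \ref{thm.fast} this quantity is well defined on all of $\R_+^2$ (the product $q_2(u_2)q_3(u_3)$ lies in the range of $q_1$), and it is continuous as a composition of the continuous functions $q_2$, $q_3$ and $q_1^{-1}$, the latter being continuous because $q_1$ is a continuous strictly increasing bijection onto its range by Assumption (A3). Consequently $g$ itself is continuous, and this first step is routine.

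For injectivity, I would suppose $g(u_2,u_3) = g(u_2',u_3')$. Subtracting the two components gives $u_2 - u_3 = u_2' - u_3'$, and comparing components separately gives $u_2 - u_2' = u_3 - u_3' = w' - w =: c$, where $w = w(u_2,u_3)$ and $w' = w(u_2',u_3')$. If $c > 0$, then $u_2' < u_2$ and $u_3' < u_3$; since $q_2$, $q_3$ are strictly increasing by Assumption (A3) and $q_1^{-1}$ is strictly increasing, this forces $w' < w$, contradicting $w' = w + c > w$. The case $c < 0$ is symmetric, so $c = 0$ and $g$ is injective.

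For surjectivity, I would fix $(v,z) \in \R_+^2$ and set $m = \min\{v,z\}$. A preimage must satisfy $u_2 = v - w$, $u_3 = z - w$ with $w \in [0,m]$ and $w = q_1^{-1}(q_2(v-w)q_3(z-w))$, so I would introduce $\Phi(w) = q_1^{-1}(q_2(v-w)q_3(z-w))$ on $[0,m]$ and seek a fixed point. The continuous function $\psi(w) = \Phi(w) - w$ satisfies $\psi(0) = \Phi(0) \ge 0$ and $\psi(m) = -m \le 0$, the latter because one of the arguments $v-m$, $z-m$ vanishes, forcing $\Phi(m) = q_1^{-1}(0) = 0$ (here $q_i(0)=0$ and $q_1^{-1}(0)=0$ by Assumption (A3)). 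The intermediate value theorem then yields $w^\ast \in [0,m]$ with $\psi(w^\ast) = 0$, and setting $u_2 = v - w^\ast \ge 0$, $u_3 = z - w^\ast \ge 0$ gives $g(u_2,u_3) = (v,z)$. Uniqueness of $w^\ast$, hence consistency with injectivity, follows since $\Phi$ is nonincreasing while $w \mapsto w$ is increasing, so $\psi$ is strictly decreasing.

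Finally, continuity of $g^{-1}$ I would obtain by a properness argument rather than by invariance of domain, since $\R_+^2$ is not open. Given $(v_n,z_n) \to (v,z)$, the preimages $(u_2^n,u_3^n) = g^{-1}(v_n,z_n)$ satisfy $0 \le u_2^n \le v_n$ and $0 \le u_3^n \le z_n$ (because the corresponding $w_n \ge 0$), hence remain in a bounded subset of $\R_+^2$. Any subsequence then has a further subsequence converging to some $(\bar u_2,\bar u_3) \in \R_+^2$; continuity of $g$ gives $g(\bar u_2,\bar u_3) = (v,z)$, and injectivity forces $(\bar u_2,\bar u_3) = g^{-1}(v,z)$. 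Since every subsequence admits a sub-subsequence with this same limit, the full sequence converges, proving continuity of the inverse. The main obstacle I anticipate is surjectivity, precisely the point of solving the fixed-point equation while keeping $u_2, u_3 \ge 0$; it is the boundary behaviour $q_i(0)=0$ and $q_1^{-1}(0)=0$ that makes the intermediate value argument close, which is exactly where the structural assumptions on the $q_i$ are essential.
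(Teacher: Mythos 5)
Your proof is correct, but it takes a genuinely different route from the paper. The paper writes $g(u_2,u_3)=(a_2(u_2,u_3)u_2,\,a_3(u_2,u_3)u_3)^\top$ with $a_i(u_2,u_3)=1+u_i^{-1}q_1^{-1}(q_2(u_2)q_3(u_3))$, checks that the $a_i$ are continuous (this is exactly where hypothesis \eqref{1.a} of Theorem \ref{thm.fast} enters), bounded below by $1$, that $u_i\mapsto u_ia_i(u)$ is increasing, that $g\in C^1((\R_+^*)^2)$ with $\det g'\ge 1$, and then invokes Proposition 6.1 of \cite{DLMT15} to conclude. You instead give a self-contained elementary argument: injectivity from strict monotonicity of the $q_i$, surjectivity by exploiting the special structure of $g$ (the same scalar $w$ is added to both components) to reduce the problem to a one-dimensional fixed-point equation $w=\Phi(w)$ solved by the intermediate value theorem, with the boundary values $q_i(0)=0$ closing the bracket, and continuity of $g^{-1}$ by a properness/compactness argument using the a priori bounds $0\le u_2\le v$, $0\le u_3\le z$. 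What each approach buys: the paper's proof is shorter (delegating to a known general proposition) and consistent with the Hadamard--L\'evy machinery already used for Lemma \ref{lem.hom}; your proof avoids any external citation and, notably, never uses hypothesis \eqref{1.a} nor differentiability of the $q_i$ --- only Assumption (A3) and the well-definedness of $q_1^{-1}$ on the range of the products $q_2(u_2)q_3(u_3)$ --- so it is both more elementary and works under slightly weaker hypotheses. The trade-off is that your IVT reduction relies on the specific structure of this $g$ and would not generalize to maps of the form $(a_2u_2,a_3u_3)$ with unrelated weights, which is exactly the generality the cited Proposition 6.1 provides.
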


\begin{proof}
The proof is based on Proposition 6.1 of \cite{DLMT15}. In order to use this 
proposition,  we write $g$ as $g(u_2,u_3)=(a_2(u_2,u_3)u_2,a_3(u_2,u_3)u_3)^\top$,
with
$$
  a_i(u_2,u_3) = 1 + \frac{1}{u_i}q_1^{-1}(q_2(u_2)q_3(u_3)), \quad i=2,3.
$$
Note that by Assumption (A3), the inverse of $q_1$ exists on $\R_+$ and that
by hypothesis \eqref{1.a}, $a_2$ and $a_3$ are continuous on $\R_+^2$. 
They are bounded from below, $a_i(u_2,u_3)\ge 1$ for all $(u_2,u_3)$. 
Moreover, $u\mapsto u_ia_i(u)$ is increasing in each 
variable, $g\in C^1((\R_+^*)^2;(\R_+^*)^2)$, and the determinant of its Jacobian 
is strictly positive:
$$
  \det g' = 1 + (q_1^{-1})'(q_2(u_2)q_3(u_3))
	\big(q_2'(u_2)q_3(u_3)+q_2(u_2)q_3'(u_3)\big) \ge 1.
$$
Then Proposition 6.1 in \cite{DLMT15} shows that $g$ is a homeomorphism on $\R_+^2$.
\end{proof}

We come back to the proof of Theorem \ref{thm.fast}.
We proceed with the limit $\eps\to 0$. Limit \eqref{3.conv2} and the continuity of
$q_1^{-1}$ imply that
$$
  u_1^\eps - q_1^{-1}(q_2(u_2^\eps)q_3(u_3^\eps)) \to 0
	\quad\mbox{strongly in }L^1(Q_T).
$$
We deduce from \eqref{3.conv1} that
\begin{align*}
  u_2^\eps + q_1^{-1}(q_2(u_2^\eps)q_3(u_3^\eps)) \to v_2
	&\quad\mbox{strongly in }L^1(Q_T), \\
	u_3^\eps + q_1^{-1}(q_2(u_2^\eps)q_3(u_3^\eps)) \to v_3
	&\quad\mbox{strongly in }L^1(Q_T).
\end{align*}
Clearly, all these convergences also hold a.e.\ in $Q_T$ (maybe only for a subsequence).
Lemma \ref{lem.inv} shows that $g$ is invertible and hence,
$$
  (u_2^\eps,u_3^\eps) = g^{-1}\big(u_2^\eps + q_1^{-1}(q_2(u_2^\eps)q_3(u_3^\eps)),
	u_3^\eps + q_1^{-1}(q_2(u_2^\eps)q_3(u_3^\eps))\big).
$$
We infer from the continuity of $g^{-1}$ that
$$
  (u_2^\eps,u_3^\eps) \to g^{-1}(v_2,v_3) \quad
	\mbox{a.e. in }Q_T.
$$
By \eqref{3.eps.u}, the convergence also holds in $L^p(Q_T)$ for all $p<2$. 

We set
$$
  (u_2,u_3) := g^{-1}(v_2,v_3), \quad
	u_1 := q_1^{-1}(q_2(u_2)q_3(u_3)).
$$
The uniform integrability of $q_1(u_1^\eps)$ and $q_2(u_2^\eps)q_3(u_3^\eps)$
from Assumption (A4) and Vitali's theorem now imply that
$$
  q_1(u_1^\eps)\to q_1(u_1), \quad 
	q_2(u_2^\eps)q_3(u_3^\eps)\to q_2(u_2)q_3(u_3)\quad\mbox{strongly in }L^1(Q_T).
$$
This shows that \eqref{1.eps2} holds.
Furthermore, the uniform integrability of $F_i(u^\eps)$ from \eqref{3.eps.u}
and the above convergences give
$$
  F_i(u^\eps)\to F_i(u)\quad\mbox{strongly in }L^1(Q_T), \quad i=1,2,3.
$$
We can now perform the limit $\eps\to 0$ in the equations
\begin{align*}
  \int_0^T\int_\Omega (u_1^\eps+u_i^\eps)\pa_t\phi_i dxdt
	&+ \int_0^T\int_\Omega(F_i(u^\eps)+F_i(u^\eps))\Delta\phi_i dxdt \\
	&= -\int_\Omega(u_1^I+u_i^I)(x)\phi_i(x,0)dx, \quad i=2,3,
\end{align*}
to conclude that $u_1+u_2$ and $u_2+u_3$ solve \eqref{1.eps1}.

Estimate \eqref{3.eps.k} and the strong convergence of $(u_i^\eps)$ in $L^1(Q_T)$
allow us to pass to the inferior limit $\eps\to 0$ in \eqref{1.ent} to conclude that
\eqref{1.ent2} holds. Here, we use the weakly lower semicontinuity of the
integrals as in the end of the proof of Theorem \ref{thm.ex}.
%\end{proof}
This concludes the proof of Theorem \ref{thm.fast}.

\begin{remark}[Example]\label{rem.ex}\rm
We consider $q_i(s)=s$. Then $u_1=u_2u_3$ and the limiting system becomes
$$
  \pa_t(u_2u_3+u_2) = \Delta\big(F_1(u)+F_2(u)\big), \quad 
	\pa_t(u_2u_3+u_3) = \Delta\big(F_1(u)+F_3(u)\big).
$$
Set $v=u_1+u_2=u_2+u_2u_3$ and $w=u_1+u_3=u_3+u_2u_3$. The mapping 
$(u_2,u_3)\mapsto(v,w)$ can be inverted explicitly:
\begin{align*}
	u_2(v,w) &= -\frac12(w-v+1) + \frac12\sqrt{(w-v+1)^2+4v}, \\
	u_3(v,w) &= u_2(w,v) = -\frac12(v-w+1) + \frac12\sqrt{(v-w+1)^2+4w}.
\end{align*}
It follows that $u_1 = u_2u_3 = w-u_3(v,w)$.
Then we can write
\begin{equation*}
  \pa_t v = \Delta G_2(v,w), \quad \pa_t w = \Delta G_3(v,w),
\end{equation*}
where $G_i(v,w) = (F_1+F_i)(w-u_3(v,w),u_2(v,w),u_3(v,w))$.
This system has an entropy structure. Indeed, let
\begin{align*}
  h_0(v,w) &= (w-u_3(v,w))\big(\log(w-u_3(v,w))-1\big)
	+ u_2(v,w)\big(\log u_2(v,w)-1\big) \\
	&\phantom{xx}{}+ u_3(v,w)\big(\log u_3(v,w)-1\big).
\end{align*}
Then
$$
  \int_\Omega h_0(v(t),w(t))dx
	+ \delta\sum_{i=1}^3\int_0^t\int_\Omega|\na [J_i(u(v,w))]|^2 dxds
	\le \int_\Omega h_0(u^I)dx,
$$
where $J_i$ is given by \eqref{1.J}. For example, if $f_i(s)=s$, we 
can compute $J_i$ explicitly. If $s\ge s_0:=(-1+\sqrt{5})/2$, we have
$1/\sqrt{s(1+s)}\le 1$ and thus,
\begin{align*}
  J_i(s) &= \int_0^s\min\bigg\{1,\frac{1}{\sqrt{y(1+y)}}\bigg\}dy \\
  &= s_0 + \bigg\{\log\bigg(\frac12 + s + \sqrt{s+s^2}\bigg) 
	- \log\bigg(\frac12 + s_0 + \sqrt{s_0+s_0^2}\bigg)\bigg\}{\mathbf 1}_{\{s>s_0\}},
\end{align*}
where ${\mathbf 1}_{\{s>s_0\}}$ is the characteristic function on $\{s>s_0\}$.
Note that the entropy $h_0$ and its associated inequality would be difficult to find without
the help of the fast-reaction limit. 
\qed
\end{remark}

%%%%%%%%%%%%%%%%%%%%%%%%%%%%%%%%%%%%%%%%%%%%%%%%%%%%%%%%%%%%%%%%%%%%%%%%%%%%%%%

\begin{appendix}
\section{Auxiliary results}\label{ref.app}

First, we give an example of functions which satisfy Assumptions (A1)-(A5).

\begin{lemma}[Assumptions (A1)-(A5)]\label{lem.fct}
The functions 
\begin{align*}
  & f_i(u_i) = \alpha_{i}u_i + u_i^{\delta}, \quad
	q_i(u_i) = u_i^\beta, \quad i=1,2,3, \\
	& f_{12}(u_1,u_2) = \alpha u_1^\gamma u_2, \quad 
	f_{21}(u_1,u_2) = \alpha u_1u_2^\gamma,
\end{align*}
satisfy Assumptions (A1)-(A5) if
$$
  \beta\ge 1, \quad \gamma\ge 1, \quad \delta\ge 1+4\max\{\beta,\gamma-1\},
	\quad 1024\alpha^2\le \min\{\alpha_1,\alpha_2,\delta\}.
$$
\end{lemma}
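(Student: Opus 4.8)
The plan is to check the six assumptions in turn; only (A5) requires genuine work. For (A1) I write $f_i(s)=g_i(s)s$ with $g_i(s)=\alpha_i+s^{\delta-1}$, continuous on $\R_+$ and $C^1$ on $\R_+^*$ because $\delta\ge1$, and read off $f_i'(s)=\alpha_i+\delta s^{\delta-1}\ge\alpha_i>0$, so $\kappa_1=\min_i\alpha_i$. For (A2) I set $g_{12}(s_1,s_2)=\alpha s_1^{\gamma-1}s_2$ and $g_{21}(s_1,s_2)=\alpha s_1 s_2^{\gamma-1}$ (continuous once $\gamma\ge1$) and check $\partial_1 f_{12}=\alpha\gamma s_1^{\gamma-1}s_2\ge0$, $\partial_2 f_{12}=\alpha s_1^\gamma\ge0$, symmetrically for $f_{21}$. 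For (A3), $q_i(s)=s^\beta$ with $\beta\ge1$ gives $q_i(0)=0$, $q_i'(s)=\beta s^{\beta-1}>0$ on $\R_+^*$, and $q_i(1)=1\ge1$.

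Assumption (A4) follows from elementary growth comparisons in which the very large exponent $\delta$ forces the diffusion denominators to dominate. Using $\sum_i F_i(s)\ge\sum_i s_i^\delta$ and $F_i(s)s_i+1\ge s_i^{\delta+1}+1$, the denominator $\sum_i F_i\sum_i s_i+1$ grows at least like $s_j^{\delta+1}$ in each variable, whereas the numerators $q_1(s_1)+q_2(s_2)q_3(s_3)$, $q_i(s_i)(1+q_i(s_i))\sim s_i^{2\beta}$, and $\int_1^{s_i}\log(1+q_i)\,dv\sim s_i\log s_i$ are of strictly lower order, since $\delta\ge1+4\beta$ yields $2\delta>\beta+1$ and $\delta+1>2\beta$. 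A split into $s_i\le1$ and $s_i\ge1$ makes each of the three quotients bounded (they even vanish at infinity), with the near-origin behaviour controlled by the $\alpha_i$.

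The heart of the matter is (A5). Substituting the explicit functions into the coefficients $T_1,T_2,T_4$ of Lemma \ref{lem.ent} (with $x=s_1$, $y=s_2$) gives $T_4=\alpha\beta\big(x^{\gamma-1}(1+\eta x^\beta)^{-1}+y^{\gamma-1}(1+\eta y^\beta)^{-1}\big)$ and
\begin{equation*}
  T_1T_2=\beta^2\,\frac{(\alpha_1+\delta x^{\delta-1}+\alpha\gamma x^{\gamma-1}y)(\alpha_2+\delta y^{\delta-1}+\alpha\gamma x y^{\gamma-1})}{x\,y\,(1+\eta x^\beta)(1+\eta y^\beta)}.
\end{equation*}
I would reduce (A5) to proving $T_4^2\le c\,T_1T_2$ for some $c<2$ uniformly in $x,y\ge0$ and $\eta\in[0,\eta_0]$; any free parameter $\delta'\in(0,1)$ of (A5) with $2(1-\delta')^2\ge c$ then validates the assumption (here $\delta'$ is that parameter, not the exponent $\delta$). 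Discarding the nonnegative cross terms in the numerator of $T_1T_2$ leaves the lower bound $T_1T_2\ge\beta^2\phi_1(x)\phi_2(y)$ with $\phi_i(s)=(\alpha_i+\delta s^{\delta-1})/(s(1+\eta s^\beta))$, while $(a+b)^2\le2(a^2+b^2)$ bounds $T_4^2$ by $2\alpha^2\beta^2(\psi_1(x)+\psi_2(y))$ with $\psi_i(s)=s^{2\gamma-2}/(1+\eta s^\beta)^2$. Writing $\psi_i=M_i\phi_i$, where $M_i(s)=s^{2\gamma-1}/((1+\eta s^\beta)(\alpha_i+\delta s^{\delta-1}))$, the hypothesis $\delta\ge1+4\max\{\beta,\gamma-1\}$ together with $\beta\ge1$ forces $\delta\ge2\gamma$ and $\delta\ge\beta+2$; these make $M_i$ uniformly bounded (the numerator is outgrown at infinity) and each $\phi_i$ bounded below by a positive constant once $\eta_0\le1$. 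The elementary bound $\phi_1\phi_2\ge\tfrac12\min\{\inf\phi_1,\inf\phi_2\}(\phi_1+\phi_2)$ then collapses everything to a single scalar inequality requiring $\alpha$ small relative to $\alpha_1,\alpha_2$ and $\delta$, which is precisely the quantitative content of $1024\,\alpha^2\le\min\{\alpha_1,\alpha_2,\delta\}$.

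The main obstacle is exactly this last step of (A5). One must keep every estimate uniform in the regularization parameter $\eta\in[0,\eta_0]$ — which dictates $\eta_0\le1$ and a little monotonicity bookkeeping in the factors $1+\eta s^\beta$ — and, more seriously, control the genuinely two-variable coupling between the \emph{sum} appearing in $T_4^2$ and the \emph{product} in $T_1T_2$, for which the uniform lower bounds on $\phi_1,\phi_2$ are indispensable. Carefully tracking the numerical constants through the bounds on $\sup M_i$ and $\inf\phi_i$, and exploiting the smallness of $\alpha$ against the diagonal-diffusion strengths $\alpha_1,\alpha_2,\delta$, is what produces the explicit threshold in the statement; I expect this constant-chasing, rather than any conceptual difficulty, to be the most laborious part of the argument.
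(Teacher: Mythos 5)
Your proposal is correct and shares the paper's overall skeleton: (A1)--(A3) by inspection, (A4) by the growth comparisons $\delta+1>2\beta$, $2\delta\ge\beta+1$, and, for (A5), the same two key moves --- discard the nonnegative cross terms $\pa_1 f_{12},\pa_2 f_{21}$ when bounding $T_1T_2$ from below, and control $T_4^2$ via $(a+b)^2\le 2(a^2+b^2)$, with the exponent hypothesis $\delta\ge 1+4\max\{\beta,\gamma-1\}$ doing the real work. Where you genuinely differ is the execution of the final comparison. The paper multiplies the (A5) inequality by $q_1q_2(1+\eta q_1)(1+\eta q_2)/(q_1'q_2')$ and runs a purely algebraic chain of inequalities ($s\le 1+s^\beta$, $(1+s^a)(1+s^b)\le 2(1+s^{\max\{a,b\}})^2$, \dots) landing exactly on $L\le 1024\alpha^2(1+u_1^{\delta-1})(1+u_2^{\delta-1})$ versus $R\ge\min\{\alpha_1,\alpha_2,\delta\}(1+u_1^{\delta-1})(1+u_2^{\delta-1})$. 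You instead keep the quotient structure and convert the sum $\psi_1+\psi_2$ into the product $\phi_1\phi_2$ through the uniform bounds $\sup M_i\le 1/\min\{\alpha_i,\delta\}$, $\inf\phi_i\ge\min\{\alpha_i,\delta\}/2$ and the inequality $\phi_1\phi_2\ge\tfrac{m}{2}(\phi_1+\phi_2)$; your intermediate claims all check out (in particular $\delta\ge 2\gamma$ and $\delta\ge\beta+2$ do follow from the stated hypotheses, and both $M_i$ and $\phi_i$ are monotone in $\eta$, so uniformity over $\eta\in[0,\eta_0]$ with $\eta_0\le 1$ is fine). One caveat: carried to completion, your chain yields the sufficient condition $8\alpha^2\le c\,\min\{\alpha_1,\alpha_2,\delta\}^2$, which is quadratic in the minimum, not ``precisely'' the stated linear threshold $1024\alpha^2\le\min\{\alpha_1,\alpha_2,\delta\}$; when $\min\{\alpha_1,\alpha_2,\delta\}<1/256$ the stated hypothesis does not imply your condition. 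Be aware, however, that the paper's own proof has looseness of exactly the same nature (its lower bound for $R$ silently replaces $\min\{\alpha_1,\delta\}\min\{\alpha_2,\delta\}$ by $\min\{\alpha_1,\alpha_2,\delta\}$, which requires the larger of $\alpha_1,\alpha_2$ to be at least $1$), and the authors state that their conditions are far from optimal; so this is a cosmetic discrepancy to fix in the write-up, not a structural flaw in your argument.
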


\begin{proof}
Assumptions (A1)-(A3) are satisfied since $\beta$, $\gamma$, $\delta\ge 1$,
Moreover, Assumption (A4) holds if $\delta>2\beta-1$. It remains to verify
Assumption (A5). Multiply the corresponding inequality by 
$q_1(u_1)q_2(u_2)(1+\eta q_1(u_1))(1+\eta q_2(u_2))/(q_1'(u_1)q_2'(u_2))$ 
and abbreviate both sides by
\begin{align*}
   L &:= \frac{q_1(u_1)q_2(u_2)}{q_1'(u_1)q_2'(u_2)}
	(1+\eta q_1(u_1))(1+\eta q_2(u_2)) \\
	&\phantom{xx}{}\times
	\bigg(\frac{q_1'(u_1)\pa_2f_{12}(u_1,u_2)}{q_1(u_1)(1+\eta q_1(u_1))}
	+ \frac{q_2'(u_2)\pa_1f_{21}(u_1,u_2)}{q_2(u_2)(1+\eta q_2(u_2))}\bigg)^2, \\
	R &:= 2(1-\delta)^2
	\big(f_1'(u_1)+\pa_1 f_{12}(u_1,u_2)\big)\big(f_2'(u_2)+\pa_2 f_{21}(u_1,u_2)\big).
\end{align*}
We have to show that $L\le R$ for some $\delta\in (0,1)$. 
We choose $\delta=1-1/\sqrt{2}$.
First, we estimate the right-hand side $R$:
$$
  R \ge 2(1-\delta)^2(\alpha_1+\delta u_1^{\delta-1})
	(\alpha_2+\delta u_2^{\delta-1}) \\
	\ge \min\{\alpha_1,\alpha_2,\delta\}(1+u_1^{\delta-1})(1+u_2^{\delta-1}).
$$
For the left-hand side $L$, we use $\eta\le 1$ and the elementary inequalities
$s\le 1+s^\beta$, $(1+s^\eps)(1+s^\eta)\le 2(1+s^{\max\{\eps,\eta\}})^2$ 
for $s\ge 0$, $\beta\ge 1$, and $\eps$, $\eta>0$:
\begin{align*}
  L &\le \frac{q_1(u_1)q_2(u_2)}{q_1'(u_1)q_2'(u_2)}
	(1+q_1(u_1))(1+q_2(u_2))
	\bigg(\frac{q_1'(u_1)}{q_1(u_1)}\pa_2f_{12}
	+ \frac{q_2'(u_2)}{q_2(u_2)}\pa_1f_{21}\bigg)^2 \\
	&= \alpha^2 u_1u_2(1+u_1^\beta)(1+u_2^\beta)(u_1^{\gamma-1}+u_2^{\gamma-1})^2 \\
	&\le 2\alpha^2(1+u_1^\beta)^2(1+u_2^\beta)^2(u_1^{2(\gamma-1)}+u_2^{2(\gamma-1)}) \\
	&\le 8\alpha^2(1+u_1^{2\beta})(1+u_2^{2\beta})
	\big((1+u_1^{2(\gamma-1)})+(1+u_2^{2(\gamma-1)})\big) \\ 
	&\le 16\alpha^2(1+u_1^{2\max\{\beta,\gamma-1\}})^2(1+u_2^{2\beta})
	+ 16\alpha^2(1+u_1^{2\beta})(1+u_2^{2\max\{\beta,\gamma-1\}})^2 \\
	&\le 64\alpha^2(1+u_1^{2\max\{\beta,\gamma-1\}})^2
	(1+u_2^{2\max\{\beta,\gamma-1\}})^2 \\
	&\le 256\alpha^2(1+u_1^{4\max\{\beta,\gamma-1\}})
	(1+u_2^{4\max\{\beta,\gamma-1\}}) \\
	&\le 1024\alpha^2(1+u_1^{\delta-1})(1+u_2^{\delta-1}),
\end{align*}
if $4\max\{\beta,\gamma-1\}\le\delta-1$.
Then $L\le R$ if additionally $1024\alpha^2\le\min\{\alpha_1,\alpha_2,\delta\}$. 
Note that these conditions are far from being optimal.
\end{proof}

%Let $F=(F_1,F_2,F_3):\R_+^3\to\R_+^3$ be the function defined in \eqref{1.F}.
We then turn to the

\begin{lemma}[$F$ is a homeomorphism]\label{lem.hom}
The function $F=(F_1,F_2,F_3):\R_+^3\to\R_+^3$ defined in \eqref{1.F}
is a $C^1$-diffeomorphism from $(\R_+^*)^3$ into itself and a homeomorphism
from $\R_+^3$ into itself.
\end{lemma}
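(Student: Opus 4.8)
The plan is to bring $F$ into the multiplicative form required by Proposition 6.1 of \cite{DLMT15} and verify its hypotheses one at a time, exactly as was done for the map $g$ in the proof of Lemma \ref{lem.inv}. Using the structural conditions (A1)--(A2), namely $f_i(s)=g_i(s)s$ and $f_{ij}(s_1,s_2)=g_{ij}(s_1,s_2)s_i$, I would write
\[
  F(u) = \big(a_1(u)u_1,\,a_2(u)u_2,\,a_3(u)u_3\big),
\]
where $a_1=g_1(u_1)+g_{12}(u_1,u_2)$, $a_2=g_2(u_2)+g_{21}(u_1,u_2)$, and $a_3=g_3(u_3)$. Since $F_3$ depends only on $u_3$, one may alternatively factor $F$ as the product of the one-dimensional map $u_3\mapsto f_3(u_3)$ with the planar map $(u_1,u_2)\mapsto(F_1,F_2)$; as a product of homeomorphisms (resp. $C^1$-diffeomorphisms) is again one, it suffices to treat each factor, but the full map can also be handled at once.

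First I would record continuity and a uniform lower bound for the coefficients $a_i$. Since $g_i\in C^0(\R_+)$ and $g_{ij}\in C^0(\R_+^2)$, each $a_i$ is continuous on $\R_+^3$. From $f_i'(s)\ge\kappa_1$ and $f_i(0)=0$ we get $f_i(s)\ge\kappa_1 s$, so $g_i(s)\ge\kappa_1$ for $s>0$ and, by continuity, $g_i(0)\ge\kappa_1$; together with $g_{ij}\ge0$ this gives $a_i(u)\ge\kappa_1>0$ on all of $\R_+^3$. Monotonicity is immediate from (A2): $\pa_1 F_1=f_1'+\pa_1 f_{12}\ge\kappa_1>0$, $\pa_2 F_1=\pa_2 f_{12}\ge0$, and symmetrically for $F_2$, while $F_3'=f_3'\ge\kappa_1$; hence each $u_ia_i(u)=F_i(u)$ is nondecreasing in every variable. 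Smoothness $F\in C^1((\R_+^*)^3)$ follows from $f_i\in C^1(\R_+)$ and $g_{ij}\in C^1((\R_+^*)^2)$.

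The main step is the strict positivity of the Jacobian determinant on $(\R_+^*)^3$. Because $F_3$ depends only on $u_3$ while neither $F_1$ nor $F_2$ depends on $u_3$, the matrix $F'(u)$ is block-triangular, so
\[
  \det F'(u) = f_3'(u_3)\Big[\big(f_1'+\pa_1 f_{12}\big)\big(f_2'+\pa_2 f_{21}\big)-\pa_2 f_{12}\,\pa_1 f_{21}\Big].
\]
As $f_3'\ge\kappa_1>0$, positivity reduces to that of the bracketed $2\times2$ minor, which is precisely inequality \eqref{1.det}. As already observed in Remark \ref{rem.ass}, \eqref{1.det} is a consequence of Assumption (A5) (expand the square in (A5) and use the arithmetic--geometric mean estimate on the cross term), so $\det F'(u)>0$ throughout $(\R_+^*)^3$.

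Having checked all hypotheses, I would invoke Proposition 6.1 of \cite{DLMT15} to conclude that $F$ is a homeomorphism of $\R_+^3$ onto itself and a $C^1$-diffeomorphism of $(\R_+^*)^3$ onto itself. The delicate point to watch is matching the exact hypotheses of that proposition, and in particular the positive-determinant condition, which is the only place where the weak cross-diffusion assumption (A5) is genuinely needed; the global (surjectivity/properness) part is harmless here, since $\sum_i F_i(u)\ge\kappa_1\sum_i u_i\to\infty$ as $|u|\to\infty$.
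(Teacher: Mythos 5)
Your pointwise verifications are correct and coincide with the paper's own computations: the multiplicative form $F_i(u)=a_i(u)u_i$ with $a_i$ continuous and bounded below by $\kappa_1$, the monotonicity $\pa_jF_i\ge 0$, the block-triangular Jacobian $\det F'(u)=f_3'\big[(f_1'+\pa_1f_{12})(f_2'+\pa_2f_{21})-\pa_2f_{12}\,\pa_1f_{21}\big]>0$ via (A5) and \eqref{1.det}, and properness from $F_i(u)\ge\kappa_1u_i$. The gap lies in the single step carrying all the topological content: the appeal to Proposition 6.1 of \cite{DLMT15}. As the paper itself invokes that proposition (proof of Lemma \ref{lem.inv}), the hypothesis verified there is that each component $u\mapsto u_ia_i(u)$ is \emph{increasing in every variable}. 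The map $F$ violates this: $F_1,F_2$ do not depend on $u_3$ and $F_3$ does not depend on $(u_1,u_2)$; and your factorization into $f_3$ and the planar map $(u_1,u_2)\mapsto(F_1,F_2)$ does not repair it, since (A2) only gives $\pa_2f_{12}\ge 0$ and $\pa_1f_{21}\ge 0$, so the planar factor is still merely nondecreasing off the diagonal. This is precisely why the authors do \emph{not} cite Proposition 6.1 here, although they happily do so for Lemma \ref{lem.inv}, and instead adapt the Hadamard--L\'evy strategy of \cite{LeMo16}.

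Moreover, the weaker proposition your checklist would require --- continuity, lower bound, componentwise nondecreasing monotonicity, and positive Jacobian \emph{on the open orthant only} --- is false, so no citation can close this hole. Take $\phi\in C^1(\R_+)$ nondecreasing with $\phi(0)=0$, $\phi(x)\ge x$, $\phi$ constant on $[1,2]$ and strictly increasing elsewhere, and set $A(x,y)=(\phi(x)+x^2y,\,y)$. Then $A$ has the multiplicative form with coefficients continuous on $\R_+^2$ and bounded below by $1$, each component is nondecreasing in each variable, and $A$ is a $C^1$-diffeomorphism of $(\R_+^*)^2$ onto itself with Jacobian $\phi'(x)+2xy>0$ there; yet $A$ collapses $[1,2]\times\{0\}$ to a point, hence is not a homeomorphism of $\R_+^2$. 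The moral is that, because $F$ is only $C^1$ in the open orthant, positivity of $\det F'$ in the interior gives no control whatsoever of $F$ on the faces of $\pa\R_+^3$: injectivity and surjectivity there, and the continuity of $F^{-1}$ up to the boundary, are exactly the delicate points of the lemma. The paper supplies them in Steps 2 and 3 of its proof (on each face the induced lower-dimensional map is shown to be a diffeomorphism by a separate Hadamard--L\'evy argument, with the face Jacobian --- again \eqref{1.det} --- checked directly, and continuity of $F^{-1}$ then follows from properness); your proposal contains none of this, so to complete it you would have to carry out that boundary analysis by hand, i.e.\ essentially reproduce the paper's Steps 2--3.
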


\begin{proof}
We follow the strategy of \cite[Section 4.2]{LeMo16}. The proof of \cite{LeMo16}
is valid for functions $F_i(s)/s_i$ whose variables separate. Since this is not the
case in our situation, we need to modify the proof.

{\em Step 1: $F$ is a $C^1$-diffeomorphism on $(\R_+^*)^3$.}
Introduce the function $\Phi=(\Phi_1,\Phi_2,\Phi_3):\R^3\to\R^3$, 
$\Phi_i(s)=\log(F_i(\exp s))$, where log and exp are defined coordinate-wise.
By the Hadamard-L\'evy theorem, a $C^1$ self-mapping is a $C^1$-diffeomorphism
if and only if it is proper and has no critical points. The proof that $\Phi$ is
proper is exactly as in \cite[Section 4.2]{LeMo16}, since here, the separability
of variables is not needed. To show that $\Phi$ has no critical points, we compute
the determinant
\begin{align*}
  \det F'(s) &= f_3'(s_3)\Big(\big(f_1'(s_1)+\pa_1 f_{12}(s_1,s_2)\big)
	\big(f_2'(s_2)+\pa_2 f_{21}(s_1,s_2)\big) \\
	&\phantom{xx}{}- \pa_2 f_{12}(s_1,s_2)\pa_1 f_{21}(s_1,s_2)\Big) > 0,
\end{align*}
which is positive
because of Assumption (A5) (see Remark \ref{rem.ass}). Since both 
$\log:(\R_+^*)^3\to\R^3$ and $\exp:\R^3\to(\R_+^*)^3$ have no critical points,
we conclude that also $\Phi$ has no critical points. By the Hadamard-L\'evy
theorem, $\Phi:\R^3\to\R^3$ is a $C^1$-diffeomorphism and so does
$F:(\R_+^*)^3\to(\R_+^*)^3$.

{\em Step 2: $F$ is bijective on $\R_+^3$.}
It remains to treat the boundary of $\R_+^3$. To this end, we split it as
$\pa\R_+^3=\{0,0,0\}\cup V$, where $V=V_1\cup\cdots\cup V_6$ and the sets 
$V_i$ are either a quarter-plane or a half-line.
Since $F_i$ can be written as the product of $s_i$ and some
nonnegative function, we have $F(V_i)\subset V_i$. We show that $F$ is bijective
on each $V_i$. 

As the six cases are similar, we give only a proof for 
$V_3=\R_+^*\times\R_+^*\times\{0\}$. The result follows when we have
shown that $F\in C^1(V_3;V_3)$ is proper and has no critical points. 
Let $s=(s_1,s_2,0)\in V_3$ and 
define the induced vector $\bar s=(s_1,s_2)\in(\R_+^*)^2$
and the induced function $\bar F:(\R_+^*)^2\to(\R_+^*)^2$,
$\bar F_i(\bar s)=F_i(s_1,s_2,0)$, $i=1,2$. 
Clearly, $\bar F$ is proper on $(\R_+^*)^2$ since $F$ is proper on $(\R_+^*)^3$.
We prove that $\bar F$ is $C^1$ and has no critical points. The first property
is clear and the second one follows from
$$
  \det \bar F'(\bar s) = \big(f_1'(\bar s_1)+\pa_1 f_{12}(\bar s)\big)
	\big(f_2'(\bar s_2)+\pa_2 f_{21}(\bar s)\big) 
	- \pa_2 f_{12}(\bar s)\pa_1 f_{21}(\bar s) > 0.
$$
By the Hadamard-L\'evy theorem, $\bar F$ is a $C^1$-diffeomorphism on $(\R_+^*)^2$.
By construction of $\bar F$, this implies that $F$ is bijective on $V_3$.

{\em Step 3: $F^{-1}$ is continuous on $\R_+^3$.}
Let $(y_n)\subset(\R_+^*)^3$ be such that $y_n\to y$ as $n\to\infty$.
If $y\in(\R_+^*)^3$, then we already know that $F^{-1}(y_n)\to F^{-1}(y)$.
Thus, let $y\in V$. Since $F$ is proper, $(F^{-1}(y_n))$ is bounded
and there exists a subsequence (not relabeled) such that $F^{-1}(y_n)\to\widetilde y$
for some $\widetilde y$. As $F$ is one-to-one and continuous, we obtain
$F(\widetilde y)=y$. Consequently, $F^{-1}(y_n)\to F^{-1}(y)$. We infer that
$F^{-1}$ is continuous on $\R_+^3$, which concludes the proof.
\end{proof}

The following result, used in Lemma \ref{lem.L2}, is proved in \cite[Lemma 2.12]{DLMT15}.

\begin{lemma}[Discrete duality estimate]\label{lem.dual}
Let $N\in\N$, $\rho>0$, and $\tau>0$ be such that $\rho\tau<1$ and set $T:=N\tau$.
Let $\mu^1,\ldots,u^N$ be nonnegative integrable functions and let
$u^0,\ldots,u^N$ be nonnegative bounded functions satisfying
$\mu^k u^k\in H^2(\Omega)$ and, for $1\le k\le N$,
$$
  \frac{1}{\tau}(u^k-u^{k-1}) - \Delta(\mu^k u^k) \le \rho u^k\quad\mbox{in }\Omega,
	\quad \na(\mu^k u^k)\cdot\nu=0\quad\mbox{on }\pa\Omega.
$$
Then there exists a constant $C>0$ only depending on $\|u^0\|_{L^2(\Omega)}$,
$\Omega$, $\rho$, and $T$, such that
$$
  \tau\sum_{k=1}^N\int_\Omega\mu^k(u^k)^2 dx 
	\le C\bigg(1 + \tau\sum_{k=1}^N\int_\Omega \mu^k dx\bigg).
$$
\end{lemma}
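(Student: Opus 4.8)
The statement is the discrete analogue of the Pierre--Schmitt duality estimate, and the plan is to transfer that method to the implicit-Euler scheme. Write $X:=\tau\sum_{k=1}^N\int_\Omega\mu^k(u^k)^2\,dx$ for the quantity to be bounded and $Y:=\tau\sum_{k=1}^N\int_\Omega\mu^k\,dx$ for the data on the right-hand side. The mechanism is to test the differential inequality against the solution of a backward (dual) scheme tailored so that, after summation by parts, the leading term reproduces exactly $X$.

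First I would introduce the dual scheme: set $w^{N+1}=0$ and, for $k=N,\dots,1$, solve backward in time the elliptic problems $\tfrac1\tau(w^k-w^{k+1})-\mu^k\Delta w^k-\rho w^k=\mu^k u^k$ in $\Omega$, with $\na w^k\cdot\nu=0$ on $\pa\Omega$; equivalently $(1-\rho\tau)w^k-\tau\mu^k\Delta w^k=w^{k+1}+\tau\mu^k u^k$. The hypothesis $\rho\tau<1$ is used here twice: it makes the elliptic operator coercive, so each $w^k$ exists, and, since the source $\mu^k u^k\ge0$ and $w^{N+1}=0$, the minimum principle yields $w^k\ge0$. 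Because $\mu^k$ is merely integrable and may vanish, solvability is obtained by replacing $\mu^k$ with $\mu^k+\eps$, solving by Lax--Milgram, and passing to the limit $\eps\to0$ with the uniform bounds from the energy step below. The reaction term $\rho w^k$ is built into the dual on purpose.

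Next comes the duality identity. Testing the hypothesis with $w^k\ge0$, multiplying by $\tau$, summing over $k$, and performing a discrete (Abel) summation by parts in time together with two integrations by parts in space (licit since $\na(\mu^k u^k)\cdot\nu=\na w^k\cdot\nu=0$), every interior contribution collapses through the defining relation of the dual scheme, and the two reaction contributions cancel \emph{exactly}. What survives is simply $X\le\int_\Omega u^0 w^1\,dx$. It remains to bound $\|w^1\|_{L^2(\Omega)}$ by a controlled multiple of $X^{1/2}$ plus data, \emph{without ever differentiating} $\mu^k$. To this end I would multiply the dual equation by $-\Delta w^k$ and sum: the time term telescopes into gradient energies, the diffusion term produces the weighted quantity $\int_\Omega\mu^k(\Delta w^k)^2\,dx$, and Young's inequality applied to $-\int_\Omega\mu^k u^k\Delta w^k\,dx$ retains only $\int_\Omega\mu^k(\Delta w^k)^2\,dx$ and $\int_\Omega\mu^k(u^k)^2\,dx$. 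The term $\rho w^k$ yields a factor absorbed by a backward discrete Gronwall argument (again using $\rho\tau<1$), so that $\|\na w^1\|_{L^2(\Omega)}^2+\tau\sum_{k}\int_\Omega\mu^k(\Delta w^k)^2\,dx\le C\,X$ with $C=C(\rho,T)$. The spatial mean of $w^1$ is handled separately: integrating the dual equation over $\Omega$ and using Cauchy--Schwarz against the weight gives $\big|\int_\Omega\mu^k\Delta w^k\,dx\big|\le(\int_\Omega\mu^k\,dx)^{1/2}(\int_\Omega\mu^k(\Delta w^k)^2\,dx)^{1/2}$ and $\int_\Omega\mu^k u^k\,dx\le(\int_\Omega\mu^k\,dx)^{1/2}(\int_\Omega\mu^k(u^k)^2\,dx)^{1/2}$, whence by Poincar\'e $\|w^1\|_{L^2(\Omega)}^2\le C(X+Y)$.

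Finally I would close the estimate. Combining $X\le\int_\Omega u^0w^1\,dx\le\|u^0\|_{L^2(\Omega)}\|w^1\|_{L^2(\Omega)}\le C\|u^0\|_{L^2(\Omega)}(X+Y)^{1/2}$ gives a quadratic inequality in $X^{1/2}$, which solves to $X\le C(1+Y)$ with $C$ depending only on $\|u^0\|_{L^2(\Omega)}$, $\Omega$, $\rho$, and $T$, as claimed. The \textbf{main obstacle} is precisely the roughness and possible degeneracy of the weights $\mu^k$: one cannot test the dual equation with $w^k$, as that would produce the uncontrolled term $\na\mu^k$. The device of testing with $-\Delta w^k$, which keeps only the weighted quantities $\int_\Omega\mu^k(\Delta w^k)^2\,dx$ and controls the mean through Cauchy--Schwarz against $\int_\Omega\mu^k\,dx$, is what makes the scheme close; establishing solvability of the degenerate dual problems (via the $\mu^k+\eps$ regularization) is the accompanying technical point.
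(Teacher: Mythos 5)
The paper does not prove this lemma itself: it is quoted from \cite[Lemma 2.12]{DLMT15}, and your proposal reconstructs exactly the discrete Pierre--Schmitt duality argument used in that reference (backward dual scheme, duality identity $X\le\int_\Omega u^0w^1\,dx$ in your notation, testing the dual equation with $-\Delta w^k$, weighted Cauchy--Schwarz for the spatial mean, Poincar\'e, quadratic inequality in $X^{1/2}$). The strategy, and in particular the key device of keeping only $\mu^k$-weighted quantities so that the right-hand side involves only $Y=\tau\sum_k\int_\Omega\mu^k\,dx$, is the correct one.

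Two of your steps, however, have concrete holes as written. (i) You place the zeroth-order term $-\rho w^k$ inside the dual scheme and claim the reaction contributions cancel exactly and never return. They do cancel in the duality identity, but they resurface in both auxiliary estimates: testing the dual equation with $-\Delta w^k$ produces the wrong-sign term $-\rho\tau\|\nabla w^k\|_{L^2(\Omega)}^2$, so your backward Gronwall closes only if $1-2\rho\tau>0$, which the hypothesis $\rho\tau<1$ does not guarantee; and integrating the dual equation over $\Omega$ and summing gives $\int_\Omega w^1\,dx=\rho\tau\sum_k\int_\Omega w^k\,dx+\tau\sum_k\int_\Omega\mu^k\Delta w^k\,dx+\tau\sum_k\int_\Omega\mu^ku^k\,dx$, and your mean-value step silently drops the first sum (it needs its own backward iteration). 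The clean repair is to eliminate $\rho$ before dualizing: with $a:=1-\rho\tau$, $v^k:=a^ku^k$, $\tilde\mu^k:=a^{-1}\mu^k$, the hypothesis becomes $\tau^{-1}(v^k-v^{k-1})-\Delta(\tilde\mu^kv^k)\le0$, one proves the case $\rho=0$, and the factors $a^{\pm N}$ are harmless since $\sup\{(1-\rho T/N)^{-N}:N\in\N,\ N>\rho T\}$ is finite and depends only on $\rho T$. (ii) The $\eps$-regularization you invoke to solve the degenerate dual problems puts $\mu^k+\eps$ into the dual operator while the primal inequality still carries $\mu^k$; consequently the duality pairing does not collapse exactly, but leaves $X_\eps\le\int_\Omega u^0w_\eps^1\,dx-\eps\tau\sum_k\int_\Omega u^k\Delta w_\eps^k\,dx$. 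This error term does vanish as $\eps\to0$ --- it is $O(\sqrt{\eps})$ by the assumed boundedness of the $u^k$ combined with $\eps\tau\sum_k\|\Delta w_\eps^k\|_{L^2(\Omega)}^2\le\tau\sum_k\int_\Omega(\mu^k+\eps)(\Delta w_\eps^k)^2\,dx\le CX_\eps$ --- but it must be argued: as written, the ``exact collapse'' presupposes the unregularized dual, whose solvability is precisely the point at issue. (A harmless further slip: your mean estimate actually yields $\|w^1\|_{L^2(\Omega)}^2\le CX(1+Y)$ rather than $C(X+Y)$; either form closes the final quadratic inequality.)
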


Finally, we recall two useful regularity results for elliptic equations.

\begin{lemma}[Theorem 2.3.3.6 in \cite{Gri85}]\label{lem.regul1}
Let $p\in(1,\infty)$ and let $\Omega\subset\R^d$ be a bounded domain with
smooth boundary. Then there exist positive constants $K_{p,\Omega}$ and
$C_{p,\Omega}$ such that for all $M>K_{p,\Omega}$ and all $u\in W^{2,1}(\Omega)$
satisfying
$$
  Mu-\Delta u = g\in L^p(\Omega), \quad \na u\cdot\nu=0\quad\mbox{on }\pa\Omega,
$$
it holds that
$$
  \|u\|_{W^{2,p}(\Omega)} \le C_{p,\Omega}\|g\|_{L^p(\Omega)}.
$$
\end{lemma}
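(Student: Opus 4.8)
The plan is to reduce the statement to the classical interior-and-boundary $L^p$ a priori estimate for the Laplacian under homogeneous Neumann conditions, and then to convert the zeroth-order term $Mu$ into a source that is controlled \emph{uniformly} in $M$. Writing the equation as $-\Delta u = g - Mu$, the Agmon--Douglis--Nirenberg / Calder\'on--Zygmund theory yields a constant $C_0=C_0(p,\Omega)$, \emph{independent of $M$}, such that every $u\in W^{2,p}(\Omega)$ with $\na u\cdot\nu=0$ on $\pa\Omega$ satisfies
\[
  \|u\|_{W^{2,p}(\Omega)} \le C_0\big(\|\Delta u\|_{L^p(\Omega)} + \|u\|_{L^p(\Omega)}\big).
\]
The lower-order term $\|u\|_{L^p}$ cannot be dropped, since constants lie in the kernel of the Neumann Laplacian; controlling it is precisely where the largeness of $M$ will enter.

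First I would establish the elementary resolvent bound $\|u\|_{L^p(\Omega)} \le M^{-1}\|g\|_{L^p(\Omega)}$. Testing $Mu-\Delta u=g$ with $|u|^{p-2}u$ and integrating by parts, the Neumann condition kills the boundary term and leaves
\[
  M\int_\Omega |u|^p\,dx + (p-1)\int_\Omega |u|^{p-2}|\na u|^2\,dx
  = \int_\Omega g\,|u|^{p-2}u\,dx.
\]
Discarding the nonnegative gradient integral and applying H\"older's inequality on the right gives $M\|u\|_{L^p}^p \le \|g\|_{L^p}\|u\|_{L^p}^{p-1}$, hence $M\|u\|_{L^p}\le\|g\|_{L^p}$. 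For $1<p<2$ the test function $|u|^{p-2}u$ is not directly admissible near the zero set of $u$; I would regularize by testing with $(u^2+\theta)^{(p-2)/2}u$ and passing to the limit $\theta\to0$, which is routine.

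Then I would combine the two ingredients. From $\Delta u = Mu-g$ and the resolvent bound,
\[
  \|\Delta u\|_{L^p(\Omega)} \le M\|u\|_{L^p(\Omega)} + \|g\|_{L^p(\Omega)} \le 2\|g\|_{L^p(\Omega)},
\]
while $\|u\|_{L^p(\Omega)}\le M^{-1}\|g\|_{L^p(\Omega)}\le\|g\|_{L^p(\Omega)}$ as soon as $M\ge 1$. Feeding these into the a priori estimate yields $\|u\|_{W^{2,p}(\Omega)}\le 3C_0\|g\|_{L^p(\Omega)}$, so the claim holds with $K_{p,\Omega}=1$ and $C_{p,\Omega}=3C_0$. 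The main obstacle is the a priori estimate itself: its boundary part requires localizing with a partition of unity, flattening $\pa\Omega$, reducing to the half-space Neumann problem, and invoking the Calder\'on--Zygmund singular-integral bounds there, with the smoothness of $\pa\Omega$ ensuring the coordinate changes stay controlled. Everything else is the short bookkeeping above, whose sole purpose is to secure the $M$-independence of $C_{p,\Omega}$.
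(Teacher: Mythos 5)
The paper offers no proof of this lemma at all: it is quoted verbatim from Grisvard \cite{Gri85}, so the only benchmark is the standard literature proof, whose skeleton you have reproduced correctly in outline. Your two ingredients --- an $M$-independent Agmon--Douglis--Nirenberg estimate for the Neumann Laplacian and the resolvent bound $M\|u\|_{L^p}\le\|g\|_{L^p}$ obtained by testing with $|u|^{p-2}u$ --- are the right ones, and the bookkeeping that makes the final constant uniform in $M$ (the only place where $M>K_{p,\Omega}$ matters, since the paper applies this with $M\sim 1/\tau\to\infty$) is sound.

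There is, however, a genuine gap: the hypothesis of the lemma is only $u\in W^{2,1}(\Omega)$, whereas both of your steps are applied as if $u\in W^{2,p}(\Omega)$. The a priori inequality you invoke is stated (correctly) for $W^{2,p}$ functions, and your duality test is not even well defined at the $W^{2,1}$ level when $p$ is large: $W^{2,1}(\Omega)$ embeds only into $L^{d/(d-2)}(\Omega)$ for $d\ge 3$, so for $p>d/(d-2)$ the integrals $\int_\Omega g\,|u|^{p-2}u\,dx$ and $\int_\Omega |u|^{p-2}|\na u|^2\,dx$ may be infinite and the integration by parts is unjustified. Since the conclusion $u\in W^{2,p}(\Omega)$ is part of what must be proved, the argument as written is circular on the regularity side: it establishes an a priori estimate, not the regularity statement. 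The standard repair is to first solve $Mv-\Delta v=g$, $\na v\cdot\nu=0$ with $v\in W^{2,p}(\Omega)$ (Lax--Milgram plus the localization/flattening machinery you describe), and then to prove $u=v$ by a uniqueness-in-$W^{2,1}$ argument for the homogeneous problem, e.g.\ bootstrapping $w=u-v$: from $\Delta w=Mw$ and $w\in W^{2,1}(\Omega)$ one gets $w\in L^{q_1}(\Omega)$ by Sobolev embedding, hence $w\in W^{2,q_1}(\Omega)$ by the already-established $L^{q_1}$ theory, and iterating up the Sobolev scale until $w$ is regular enough to test and conclude $w\equiv 0$. With that step inserted your estimate applies to $v=u$ and the proof closes; without it, the $W^{2,1}$ hypothesis --- which is precisely the delicate point for which the paper cites Grisvard rather than ``standard elliptic regularity'' --- is never used or addressed.
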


\begin{lemma}[Lemma 6.6 in \cite{DLMT15}]\label{lem.regul2}
Let $\Omega\subset\R^d$ be a bounded domain, $f\in L^p(\Omega)$ with $p>d/2$, 
and let $u\in H^2(\Omega)$ satisfy $u\ge 0$ in $\Omega$ and
$$
  -\Delta u \le f\quad\mbox{in }\Omega, \quad \na u\cdot\nu=0\quad\mbox{on }\pa\Omega.
$$
Then there exists a positive constant only depending on $\Omega$ such that
$$
  \|u\|_{L^\infty(\Omega)} \le C\big(\|f\|_{L^p(\Omega)} + \|u\|_{L^1(\Omega)}\big).
$$
\end{lemma}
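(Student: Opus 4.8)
The plan is to establish this standard $L^\infty$ bound by the De Giorgi--Stampacchia truncation (level-set) method, which is the natural device for passing from an $L^p$ datum with $p>d/2$ to an $L^\infty$ bound. First I would record the weak formulation of the differential inequality: since $u\in H^2(\Omega)$ with $u\ge 0$ and $\na u\cdot\nu=0$ on $\pa\Omega$, an integration by parts (whose boundary term vanishes thanks to the Neumann condition) shows that $\int_\Omega\na u\cdot\na\phi\,dx\le\int_\Omega f\phi\,dx$ for every $\phi\in H^1(\Omega)$ with $\phi\ge 0$. The admissible test functions will be the truncations $(u-k)^+=\max\{u-k,0\}\in H^1(\Omega)$ for levels $k\ge 0$; inserting them yields the basic energy inequality $\int_{A_k}|\na(u-k)^+|^2\,dx\le\int_{A_k}f\,(u-k)\,dx$, where $A_k:=\{x\in\Omega:u(x)>k\}$.

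The crucial structural observation is that the left-hand side involves only the gradient, which is invariant under adding a constant to $u$; this explains why the final estimate must retain the term $\|u\|_{L^1(\Omega)}$, needed to pin down that additive constant (note that $u\equiv\mathrm{const}$ already satisfies the hypotheses). To exploit this I would start the iteration at the level $k_0:=2\|u\|_{L^1(\Omega)}/|\Omega|$, so that Chebyshev's inequality gives $|A_{k_0}|\le\tfrac12|\Omega|$, and hence $|\{(u-k)^+=0\}|\ge\tfrac12|\Omega|$ for all $k\ge k_0$. On functions vanishing on a set of measure at least $\tfrac12|\Omega|$, the Sobolev--Poincar\'e inequality $\|(u-k)^+\|_{L^{2^*}(\Omega)}\le C_P\|\na(u-k)^+\|_{L^2(\Omega)}$ holds with no lower-order term, where $C_P=C_P(\Omega)$ and $2^*=2d/(d-2)$ (for $d\le 2$ one replaces $2^*$ by any large finite exponent).

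Combining this with H\"older's inequality on the right-hand side (splitting $f\,(u-k)^+$ with exponents $p$, $2^*$, and $r$ where $1/p+1/2^*+1/r=1$) and absorbing one factor of $\|\na(u-k)^+\|_{L^2}$, I obtain $\|(u-k)^+\|_{L^{2^*}}\le C\|f\|_{L^p}|A_k|^{1/r}$. Estimating $(h-k)|A_h|^{1/2^*}\le\|(u-k)^+\|_{L^{2^*}}$ for $h>k\ge k_0$ then produces the recursion $|A_h|\le(C\|f\|_{L^p}/(h-k))^{2^*}|A_k|^{\beta}$ with $\beta=2^*/r$, and a direct computation shows that $\beta>1$ holds \emph{exactly} when $p>d/2$ --- this is the single place where the hypothesis on $p$ enters. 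Stampacchia's iteration lemma then forces $|A_{k_0+D}|=0$ with $D\le C(\Omega)\|f\|_{L^p}$, so that $\|u\|_{L^\infty(\Omega)}\le k_0+D\le C(\|u\|_{L^1(\Omega)}+\|f\|_{L^p(\Omega)})$ with $C=C(\Omega)$.

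I expect the main obstacle to be the bookkeeping forced by the Neumann condition together with the $L^1$ norm on the right: because there is no vanishing trace, the plain Poincar\'e inequality is unavailable, and one must instead confine the iteration to high levels $k\ge k_0$ where the super-level set is small enough for the Sobolev--Poincar\'e inequality to apply. Arranging that $k_0$ depends only on $\|u\|_{L^1}$, and verifying that $D$ depends only on $\Omega$ and $\|f\|_{L^p}$, is precisely what keeps the final constant dependent on $\Omega$ alone, as claimed.
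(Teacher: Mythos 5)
Your proposal is correct. Note first that the paper itself gives no proof of this lemma: it is quoted verbatim as Lemma 6.6 of \cite{DLMT15}, so there is no argument in the text to compare yours against; what you have produced is a self-contained proof of a cited auxiliary result, by the standard De Giorgi--Stampacchia truncation method. The key steps all check out: the weak formulation via integration by parts (the Neumann condition killing the boundary term, legitimate for $u\in H^2$ on a smooth domain), testing with $(u-k)^+$, starting the iteration at $k_0=2\|u\|_{L^1}/|\Omega|$ so that Chebyshev guarantees $|A_{k}|\le\tfrac12|\Omega|$ for $k\ge k_0$, the Sobolev--Poincar\'e inequality without lower-order term for functions vanishing on half of $\Omega$, and the exponent bookkeeping: with $1/p+1/2^*+1/r=1$ one finds $\beta=2^*/r>1$ if and only if $p>d/2$, so Stampacchia's iteration lemma applies and yields $\|u\|_{L^\infty}\le k_0+C(\Omega)\|f\|_{L^p}$, which is the claimed bound since $u\ge 0$. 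Two minor caveats you should make explicit: (i) the Sobolev--Poincar\'e step requires $\Omega$ to be an extension domain (e.g.\ Lipschitz or smooth boundary), which is stronger than the bare ``bounded domain'' in the statement but consistent with the paper's standing assumption of smooth $\pa\Omega$; (ii) the constant produced by your argument depends on $p$ and $d$ as well as $\Omega$ (through $C_P$, the H\"older exponents, and Stampacchia's lemma), which is also implicit in the statement as written. Neither affects the validity of the proof.
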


\end{appendix}

%%%%%%%%%%%%%%%%%%%%%%%%%%%%%%%%%%%%%%%%%%%%%%%%%%%%%%%%%%%%%%%%%%%%%%%%%%%%%%%


\begin{thebibliography}{11}
\bibitem{BoHi03} D.~Bothe and D.~Hilhorst. A reaction-diffusion system with fast
reversible reaction. {\em J. Math. Anal. Appl.} 286 (2003), 125-135.

\bibitem{BoPi12} D.~Bothe and M.~Pierre. The instantaneous limit for 
reaction-diffusion systems with a fast irreversible reaction.
{\em Discrete Contin. Dyn. Sys. S} 5 (2012), 49-59.

\bibitem{BPR12} D.~Bothe, M.~Pierre, and G~Rolland. Cross-diffusion limit for a 
reaction-diffusion system with fast reversible reaction. 
{\em Commun. Partial Diff. Eqs.} 37 (2012), 1940-1966.

\bibitem{BoRo15} D.~Bothe and G.~Rolland. Global existence for a class of
reaction-diffusion systems with mass action kinetics and concentration-dependent
diffusivities. {\em Acta Appl. Math.} 139 (2015), 25-57.

\bibitem{Bre11} H.~Br\'ezis. {\em Functional Analysis, Sobolev Spaces and Partial 
Differential Equations}. Springer, New York, 2011.

\bibitem{ChJu04} L.~Chen and A.~J\"ungel. Analysis of a multi-dimensional
parabolic population model with strong cross-diffusion.
{\em SIAM J. Math. Anal.} 36 (2004), 301-322.

\bibitem{CoDe14} F.~Conforto and L.~Desvillettes. Rigorous passage to the limit
in a system of reaction-diffusion equations towards a system including
cross-diffusion. {\em Commun. Math. Sci.} 12 (2014), 457-472.

\bibitem{CrHi16} E.~Crooks and D.~Hilhorst. Self-similar fast-reaction limits for
reaction-diffusion systems on unbounded domains. {\em J. Diff. Eqs.} 261 (2016), 
2210-2250.

\bibitem{DLM14} L.~Desvillettes, T.~Lepoutre, and A.~Moussa. Entropy, duality,
and cross diffusion. {\em SIAM J. Math. Anal.} 46 (2014), 820-853.

\bibitem{DLMT15} L.~Desvillettes, T.~Lepoutre, A.~Moussa, and A.~Trescases.
On the entropic structure of reaction-cross diffusion systems.
{\em Commun. Partial Diff. Eqs.} 40 (2015), 1705-1747.

\bibitem{DrJu12} M.~Dreher and A.~J\"ungel. Compact families of piecewise constant 
functions in $L^p(0,T;B)$. {\em Nonlin. Anal.} 75 (2012), 3072-3077.

\bibitem{Gri85} P.~Grisvard. {\em Elliptic Problems in Nonsmooth Domains}. 
Pitman, Boston, 1985.

\bibitem{HeTa16} F.~Henneke and B.~Q.~Tang. Fast reaction limit of a volume-surface 
reaction-diffusion system towards a heat equation with dynamical boundary conditions.
{\em Asympt. Anal.} 98 (2016), 325-339.

\bibitem{HHP96} D.~Hilhorst, R.~van der Hout, and L.~Peletier. The fast reaction
limit for a reaction-diffusion system. {\em J. Math. Anal. Appl.} 199 (1996), 349-373.

\bibitem{IMN06} M.~Iida, M.~Mimura, and H.~Ninomiya. Diffusion, cross-diffusion and 
competitive interaction. {\em J. Math. Biol.} 53 (2006), 617-641.

\bibitem{Jue15} A.~J\"ungel. The boundedness-by-entropy method for cross-diffusion
systems. {\em Nonlinearity} 28 (2015), 1963-2001.

\bibitem{LeMo16} T.~Lepoutre and A.~Moussa. Entropic structure and duality for multiple 
species cross-diffusion systems. {\em Nonlin. Anal.}  159 (2017), 298-315. 

\bibitem{MeMu10} S.~Meier and A.~Muntean. A two-scale reaction-diffusion system: 
homogenization and fast-reaction limits. In: Current advances in nonlinear analysis 
and related topics, pp.~443-461, {\em GAKUTO Internat. Ser. Math. Sci. Appl.} 32, 
Gakkotosho, Tokyo, 2010. 

\bibitem{Mur12} H.~Murakawa. A relation between cross-diffusion and 
reaction-diffusion. {\em Discrete Contin. Dyn. Sys.} 5 (2012), 147-158.

\bibitem{MuNi11} H.~Murakawa and H.~Ninomiya. Fast reaction limit of a three-component
reaction-diffusion system. {\em J. Math. Anal. Appl.} 379 (2011), 150-170.

\bibitem{PiSc97} M.~Pierre and D.~Schmitt. Blow up in reaction-diffusion systems 
with dissipation of mass. {\em SIAM J. Math. Anal.} 28 (1997), 259-269.

\bibitem{Sim87} J.~Simon. Compact sets in the space $L^p(0,T;B)$. 
{\em Ann. Mat. Pure Appl.} 146 (1987), 65-96.

\end{thebibliography}
\end{document}